\numberwithin{equation}{section}
\newtheorem{theorem}{Theorem}[section]
\newtheorem{proposition}[theorem]{Proposition}
\newtheorem{corollary}[theorem]{Corollary}
\newtheorem{lemma}[theorem]{Lemma}
\newtheorem{remark}[theorem]{Remark}
\newtheorem{definition}[theorem]{Definition}
\def\Ext{\mbox{\rm Ext}\,} \def\Hom{\mbox{\rm Hom}} \def\dim{\mbox{\rm dim}\,}
\def\End{\mbox{\rm End}\,}
\def\rad{\mbox{\rm rad}\,}\def\soc{\mbox{\rm soc}\,}
\def\Dim{\mbox{\rm \textbf{dim}}\,}
\def\ZZ{\mathbb Z}
\def\End{\operatorname{End}}
\def\ker{\operatorname{ker}}
\def\rad{\operatorname{rad}}
\def\soc{\operatorname{soc}}
\def\ZZ{\mathbb Z}
\def\ZZ{\mathbb{Z}}
\def\dim{\operatorname{dim}}
\def\Hom{\operatorname{Hom}}
\def\Ext{\operatorname{Ext}}
\def\half{\frac{1}{2}}
\renewcommand{\eqref}[1]{{\rm (\ref{#1})}}
\begin{document}

\title[Atomic basis of quantum cluster algebra of  type $\widetilde{A}_{2n-1,1}$]
{Atomic basis of quantum cluster algebra of  type $\widetilde{A}_{2n-1,1}$}

\author{Ming Ding, Fan Xu and Xueqing Chen}

\address{School of Mathematics and Information Science\\
Guangzhou University, Guangzhou, P. R. China}
\email{m-ding04@mails.tsinghua.edu.cn (M.Ding)}
\address{Department of Mathematical Sciences\\
Tsinghua University\\
Beijing 100084, P. R. China} \email{fanxu@mail.tsinghua.edu.cn (F.Xu)}
\address{Department of Mathematics,
 University of Wisconsin-Whitewater\\
800 W. Main Street, Whitewater, WI.53190. USA}
\email{chenx@uww.edu (X.Chen)}

%\keywords{$\mathbb{Z}[q^{\pm \frac{1}{2}}]-$basis, quantum cluster
%algebra.}

%\date{April 10, 2010}
\thanks{Ming Ding was supported by NSF of China (No. 11771217), Fan Xu was supported by NSF of China (No. 11471177) and Xueqing Chen was supported by Strategic Priorities Endowment Fund from Department of Mathematics UW-Whitewater.}

%    General info
%\subjclass[2000]{Primary  16G20, 17B67; Secondary  17B35, 18E30}

%\date{\today}

\keywords{quantum cluster algebra, cluster multiplication formula, positivity}

%\maketitle

\begin{abstract}
Let $Q$ be the affine quiver of type $\widetilde{A}_{2n-1,1}$  and $\mathcal{A}_{q}(Q)$ be  the  quantum cluster algebra associated to the valued quiver $(Q,(2,2,\dots,2))$.  We prove some cluster multiplication formulas, and deduce  that the cluster variables associated with vertices of $Q$ satisfy a quantum analogue of the constant coefficient linear relations. We then construct two bar-invariant  $\mathbb{Z}[q^{\pm\frac{1}{2}}]$-bases $\mathcal{B}$ and $\mathcal{S}$ of $\mathcal{A}_{q}(Q)$ consisting of positive elements, and prove that $\mathcal{B}$ is an atomic basis. 
\end{abstract}

\maketitle

%\tableofcontents

\section{Introduction}

Cluster algebras were invented and investigated by Fomin and Zelevinsky~\cite{ca1,ca2} in order
to develop an algebraic framework for understanding total positivity and canonical bases in quantum groups.
As a noncommutative analogue of cluster algebras, the concept of quantum cluster algebras was introduced by Berenstein and Zelevinsky in ~\cite{BZ2005}. The theory of cluster algebras and quantum cluster algebras has a close link to many other areas such as representation theory, Poisson geometry, Lie theory and combinatorics. The link between cluster algebras and quiver representations via cluster categories \cite{BMRRT} is explicitly characterized by the Caldero-Chapoton map \cite{CC} and the Caldero-Keller multiplication theorems \cite{CK-2}. The Caldero-Chapoton map associates the objects in the cluster
categories to some Laurent polynomials, in particular, sends indecomposable rigid objects to cluster variables.  Rupel \cite{rupel} defined a quantum version of the
Caldero-Chapoton map for the quantum cluster algebras over finite fields associated with valued acyclic quivers. For acyclic equally valued quivers,  Qin ~\cite{fanqin}
proved that cluster variables are images of indecomposable rigid objects under the quantum Caldero-Chapoton formula. In \cite{rupel-2}, Rupel extended these results to the quantum cluster algebras over finite fields associated with all valued acyclic quivers.

Let $Q$ be a finite connected quiver with $n$ vertices and without oriented cycles of length $1$ and $2$ and ${\bf{x}} = (x_1, \ldots, x_n)$ be a $n$-tuple of variables. The pair $(Q,{\bf{x}})$ is called the {cluster} of the seed. Through {mutation},  one can define recursively a family of seeds. The (coefficient--free) {cluster algebra }$\mathcal A(Q)$ is the $\mathbb{Z}$-subalgebra of the {ambient field} $\mathbb{Q}(x_1, \ldots, x_n)$ generated by all the cluster variables of the seeds arising from mutation. The so--called Laurent phenomenon tells that $\mathcal A(Q)$ is a subring of $\mathbb{Z}[c_1^{\pm 1}, \ldots, c_n^{\pm 1}]$ 
 for any cluster ${\bf{c}}= (c_1, \ldots, c_n)$ in $\mathcal A(Q)$.  An element in $\mathcal A(Q)$ is called {positive} if it belongs to the semiring $\mathbb{Z}_{\geq 0}[c_1^{\pm 1}, \ldots, c_n^{\pm 1}]$ for any cluster ${\bf{c}} = (c_1, \ldots, c_n)$ in $\mathcal A(Q)$.  Denoted by $\mathcal A(Q)^+$ the cone of positive elements in $\mathcal A(Q)$. An {atomic basis} (or a {canonically positive basis}) of $\mathcal A(Q)$ is a $\mathbb{Z}$-basis $\mathcal{B}$ of $\mathcal A(Q)$ such that $\mathcal A(Q)^+ = \bigoplus_{b \in \mathcal{B}} \mathbb{Z}_{\geq 0}b.$ 
It follows immediately from the definition that the existence of an atomic basis  implies that it consists of the positive indecomposable
elements in the cluster algebra, i.e., those non-zero positive elements that cannot be written
as a non-trivial sum of positive elements. The problem of showing the existence of the atomic basis of $\mathcal A(Q)$ remains wide open in general. For rank 2 cluster algebras of finite and affine types, Sherman and Zelevinsky introduced and constructed atomic bases which were originally called canonical bases \cite{SZ}. Cerulli constructed an atomic basis for the cluster algebra of type $A_{2}^{(1)}$ in~\cite{CI-1}. He also proved that the atomic basis coincides with the set of cluster monomials of $\mathcal A(Q)$ if $\mathcal A(Q)$ is of finite type~\cite{CI-2}. If $\mathcal A(Q)$ is not of finite type, the set of cluster monomials is not enough to be a good basis.  Dupont and Thomas~\cite{DTh} constructed the atomic bases of the cluster algebras of arbitrary quivers of affine type $\widetilde A$. Meanwhile, they also provided a new, short and elementary proof of Cerulli's result for cluster algebras of type $A$.

For the quantum analogue, the atomic basis of the quantum cluster algebra  of the Kronecker quiver,  i.e., type $\widetilde{A}_{1,1}$ was constructed in \cite{dx}
and of type $A_{2}^{(2)}$ was constructed in \cite{BCDX}, respectively.  It is natural to ask whether there exist the atomic bases for the quantum cluster algebras of finite and affine types.

In this paper, we focus on the quantum cluster algebra of the quiver $Q$ of type $\widetilde{A}_{2n-1,1}$, $n\geq 1$.  As the Chebyshev polynomials of the first kind  $F_{m}(x)$ are used to construct the atomic bases in classical cluster algebras, it is important to study them in the quantum cases. By using the quantum multiplication formulas  proved in \cite{dx-2,fanqin,rupel-2}and the positivity of quantum cluster variables~\cite{Davison,KQ}, we  construct two bar-invariant  $\mathbb{Z}[q^{\pm\frac{1}{2}}]$-bases of  the quantum cluster algebra $\mathcal{A}_{q}(Q)$ consisting of positive elements. One of these bases denoted by $\mathcal{B}$ as the atomic basis of this quantum cluster algebra is  explicitly described.
 By specializing $q$ to $1$,  $\mathcal{B}$ is exactly an atomic basis for cluster
algebra $\mathcal{A}(Q)$ proved in \cite{DTh}. The construction of the basis $\mathcal{B}$ was strongly relied on the quantum analogue of the constant coefficient linear relations. Firstly Lemmas~\ref{induc1} and ~\ref{induc11} provide a quantum analogue of the linear relations of the frieze sequences of cluster algebras associated with the vertices of the affine quiver $\widetilde{A}_{2n-1,1}$ as discussed by Keller and Scherotzke in \cite[Theorem 8.1.(b)]{KS}. Secondly Theorem~\ref{linear} can be considered as a quantum analogue of  \cite[Theorem 1.1]{P} in which Pallister showed that the cluster variables of the cluster algebras of affine types satisfy linear recurrence relations with periodic coefficients, which imply the constant coefficient relations found by Keller and Scherotzke~\cite{KS}. For the type $\widetilde{A}_{2n-1,1}$, we find an explicit representation--theoretic interpretation of the quantity $\mathcal{K}$ in  \cite[Theorem 1.1]{P} which  is nothing but $F_{2n}(X_\delta)$, i.e., the $2n$-th Chebyshev polynomial of the first kind on $X_\delta$. Most recently, Davison and Mandel~\cite{DM} construct ``quantum theta bases",  extending the set of quantum cluster monomials, for various versions of skew-symmetric quantum cluster algebras. 
These bases consist precisely of the indecomposable universally positive elements of the algebras they generate. The atomic basis of the quantum cluster algebra of type $\widetilde{A}_{2n-1,1}$ precisely consists of all indecomposable positive elements, which is a quantum version of bracelets basis of~\cite{MSW} and is expected to coincide with ``quantum theta basis".  We expect the method used in this paper can be applied on the other affine types and finite types, probably we will add some frozen vertices on the corresponding quivers.

The paper is organized as follows.  In Section 2 we recall the definitions of quantum cluster algebras and describe the quantum cluster characters for acyclic  quiver of type $\widetilde{A}_{2n-1,1}$. We provide linear relations for quantum cluster variables of type $\widetilde{A}_{2n-1,1}$ in Section 3. Then we construct the bar--invariant basis $\mathcal{B}$ of the quantum cluster algebra of type $\widetilde{A}_{2n-1,1}$ which is proved to be an atomic basis, and another bar-invariant basis $\mathcal{S}$ consisting of positive elements in Section 4. In the appendix, we provide a proof of the statement that the shift functor on the cluster category of the equal--valued acyclic quiver of full rank induces an automorphism of the corresponding quantum cluster algebra.

\section{Preliminaries}
One can refer to \cite{BZ2005,fanqin,rupel-2} for more details about the definitions of quantum cluster algebras and quantum cluster characters for acyclic valued quivers. 
\subsection{Quantum cluster algebras}
Let $\Lambda: \mathbb{Z}^{m} \times \mathbb{Z}^{m} \rightarrow \mathbb{Z}$ be a skew-symmetric bilinear form on $ \mathbb{Z}^{m}$. Denote by $\{{\bf e}_1,\cdots,{\bf e}_m\}$ the standard basis vectors in  $\mathbb{Z}^{m}$.  Let $\mathfrak{q}$ be an inderterninate and $\ZZ[\mathfrak{q}^{\pm\frac{1}{2}}]$ the ring of integer Laurent polynomials. The  quantum torus  $\mathcal{T}_{\mathfrak{q}}$ associated to $\Lambda$ is the $\ZZ[\mathfrak{q}^{\pm\frac{1}{2}}]$-algebra freely generated by the set $\{X^{\bf e}: {\bf e}\in \mathbb{Z}^{m}\}$ with multiplication defined by
$$X^{\bf e}X^{\bf f}=\mathfrak{q}^{\Lambda({\bf e},{\bf f})/2}X^{{\bf e+f}}\,\,\, \,\,\,\, (\bf{e}, \bf{f} \in  \mathbb{Z}^m).$$
Note  that $\mathcal{T}_{\mathfrak{q}}$ is an Ore domain, and  is contained in its
skew-field of fractions $\mathcal{F}_{\mathfrak{q}}$. Without causing confusion, we identify $\Lambda$ with  the $m\times m$ skew-symmetric integer matrix associated to the bilinear form $\Lambda$.

Let $\tilde{B}=(b_{ij})$ be an $m\times n$ integer matrix with $n\le m$ and $\tilde{B}^{tr}$ the transpose of  $\tilde{B}$.  The upper $n\times n$ submatrix of $\tilde{B}$ is denoted by $B$, which is called the principle part of $\tilde{B}$. The pair
$(\Lambda, \tilde{B})$ is called {\em compatible} if $\tilde{B}^{tr}\Lambda=(D|0)$ for some diagonal matrix
$D$ with positive entries. An {\em initial  quantum seed} $(\Lambda, \tilde{B}, X)$ of $\mathcal{F}_{\mathfrak{q}}$ 
  consists of a compatible pair $(\Lambda,\tilde{B})$ and the set  $X=\{X_1,\cdots,X_m\}$, where $X_i:=X^{{\bf e_i}}$ for $1\leq i\leq m$. For any $1\leq k\leq n$, one can define the mutation $\mu_k$ of the quantum seed $(\Lambda, \tilde{B}, X)$ in direction $k$ to be the new quantum seed $(\Lambda',\tilde{B}',X'):=\mu_k (\Lambda, \tilde{B}, X)$, where 

(1)\ $\Lambda'=E^{tr}\Lambda E$, where the
$m\times m$ matrix $E=(e_{ij})$ is given by
\[e_{ij}=\begin{cases}
\delta_{ij} & \text{if $j\ne k$;}\\
-1 & \text{if $i=j=k$;}\\
\max(0,-b_{ik}) & \text{if $i\ne j = k$.}
\end{cases}
\]

(2)\ $\tilde{B}'=(b'_{ij})$ is given by
\[b'_{ij}=\begin{cases}
-b_{ij} & \text{if $i=k$ or $j=k$;}\\
b_{ij}+[b_{ik}]_{+}b_{kj} +b_{ik}[-b_{kj}]_{+}& \text{otherwise.}
\end{cases}
\]

(3)\  $X'=\{X'_1,\cdots,X'_m\}$ is given by
\begin{align}
X_k'&=X^{\sum_{1\leq i\leq m}[b_{ik}]_{+} {\bf e}_i -{\bf e}_k}+X^{\sum_{1\leq i\leq m}[-b_{ik}]_{+} {\bf e}_i -{\bf e}_k},\nonumber\\
X_i'&=X_i ,\quad 1\leq i\leq m,\quad i\neq k,\nonumber
\end{align}
where $[a]_{+}:=\max\{0,a\}$  for $a \in \mathbb{Z}$.

Note that the mutation is an involution. Two quantum seeds  $(\Lambda, \tilde{B}, X)$ and  $(\Lambda', \tilde{B}', X')$ are
called {\em mutation-equivalent}, denoted by $(\Lambda, \tilde{B}, X)\sim(\Lambda', \tilde{B}', X')$, if they can be obtained from each other
by iterated mutations. The set $\{X'_i~|1\leq i\leq n\}$ is called the {\em cluster}. 
The elements in  each cluster are called the {\em quantum cluster variables}, and  the  {\em quantum cluster monomials} are those $X'^{\bf e}$ such that $\bf e \in (\mathbb{Z}_{\geq 0})^{m}$  and 
$e_i=0$ for $n+1\leq i\leq m$. The
elements in $\mathbb{P}:=\{X_i~|~n+1\leq i\leq m\}$ are called the {\em coefficients}. Denote by
$\ZZ\mathbb{P}$ the ring of  Laurent polynomials in the elements of $\mathbb{P}$ with coefficients in $\ZZ[\mathfrak{q}^{\pm\frac{1}{2}}]$. Then the
{\em quantum cluster algebra}
$\mathcal{A}_{\mathfrak{q}}(\Lambda,\tilde{B})$ is defined to be the
$\ZZ\mathbb{P}$-subalgebra of $\mathcal{F}_{\mathfrak{q}}$ generated by all
quantum cluster variables.
The $\mathbb{Z}$-linear bar-involution on $\mathcal{T}_{\mathfrak{q}}$ is defined by
$$\overline{\mathfrak{q}^{\frac{r}{2}}X^{\mathbf{e}}}=\mathfrak{q}^{-\frac{r}{2}}X^{\mathbf{e}},\ \   \text{ for } r\in \mathbb{Z}  \text { and }\mathbf{e}\in \mathbb{Z}^{m}.$$
It is straightforward to show that $\overline{XY} = \overline{Y}\ \overline{X}$ for all $X, Y \in \mathcal{T}_{\mathfrak{q}}$ and that 
quantum cluster monomials are
bar-invariant. The bar-involution on $\mathcal{A}_{\mathfrak{q}}(\Lambda,\tilde{B})$ can be induced naturally.

\subsection{Quantum cluster characters for  type $\widetilde{A}_{2n-1,1}$}

Now we consider the  compatible pair $(\Lambda,\tilde{B})$  as follows: when $n \geq 2$, let 
\begin{equation*}\label{matrix-Lambda}
\Lambda:=\begin{pmatrix}
0 & 1 & 0 &1& \cdots & 1& 0 & 1  \\

-1 & 0& 1 &0& \cdots & 0& 1 & 0  \\

0 & -1& 0 &1& \cdots & 1& 0 & 1  \\

-1 & 0& -1 &0& \cdots & 0& 1 & 0  \\

\vdots & \vdots & \vdots & \vdots & & \vdots& \vdots& \vdots \\

-1 & 0& -1 &0& \cdots & 0& 1 & 0  \\

0 & -1& 0 &-1& \cdots & -1& 0 & 1  \\

-1 & 0& -1 &0& \cdots & 0& -1 & 0  \\

\end{pmatrix}_{2n\times 2n}.
\end{equation*} 
and 
\begin{equation*}\label{matrix-B}
\tilde{B}=B:=\begin{pmatrix}
0 & 1 & 0 &0& \cdots & 0& 0 & 1  \\

-1 & 0& 1 &0& \cdots & 0& 0 & 0  \\

0 & -1& 0 &1& \cdots & 0& 0 & 0  \\

0 & 0& -1 &0& \cdots & 0& 0 & 0  \\

\vdots & \vdots & \vdots & \vdots & & \vdots& \vdots& \vdots \\

0 & 0& 0 &0& \cdots & 0& 1 & 0  \\

0 & 0& 0 &0& \cdots & -1& 0 & 1  \\

-1 & 0& 0 &0& \cdots & 0& -1 & 0  \\

\end{pmatrix}_{2n\times 2n}
\end{equation*} 
and when $n=1$, let  $\Lambda:=\left(\begin{array}{cc} 0 & 1\\ -1 &
0\end{array}\right)$ and $\tilde{B}=B:=\left(\begin{array}{cc} 0 & 2\\
-2 & 0\end{array}\right)$.

Note that $\operatorname{det} \tilde{B} \neq 0$ and ${\tilde{B}}^{tr}\Lambda=2 \mathbf{I}_{2n}$, where $\mathbf{I}_{2n}$ is the identity matrix of size $2n$.   According to Rupel~\cite{rupel-2},  one can define a valued quiver from the compatible pair
$(\Lambda, \tilde{B})$. Since the the skew-symmetrizable principal submatrix of $\tilde{B}$ is itself,  one can associate a valued quiver $(Q, {\bf{d}})$ to  $\tilde{B}$
where the valuation $d_i$ is the $i$-th diagonal entry of the matrix $D$ occurring in the compatibility condition for $(\Lambda, \tilde{B})$. For the given compatible pair, the quiver $Q$ is the affine quiver of type 
$\widetilde{A}_{2n-1,1}$:

\begin{tiny}
\[ \xymatrix{ & 2  \ar[r]  & 3  \ar[r]  & \cdots  \ar[r]      & 2n-2    \ar[r] & 2n-1 \ar[dr] \\
 1 \ar@{-}[r] \ar[ru] \ar[rrrrrr]   &  & &  &  &  &   2n}\]
\end{tiny}
and the valuation vector is $\bf d$$=(2,2, \dots,2) \in \mathbb{Z}^{2n}$. 

In the rest of the paper,  we will work only on the valued quiver $(Q,\bf{d})$ of type $\widetilde{A}_{2n-1,1}.$ 

Let $\mathbb{F}$ be a finite field with cardinality $|\mathbb{F}|=q$ and $\overline{\mathbb{F}}$ an algebraic closure of $\mathbb{F}$. We denote by $\mathbb{F}_k$ the degree $k$ extension of $\mathbb{F}$ in $\overline{\mathbb{F}}$ for each positive integer $k$. The valued quiver   $(Q, {\bf{d}})$ used by Rupel in ~\cite{rupel} is essentially equivalent to the valued graph discussed by Dlab and Ringel in~\cite{DR}.   In a valued quiver,  the vertices are assigned values  rather than edges.  For the valued quiver $(Q, (2,2,\dots,2))$ with the set of  vertices $Q_0$ and the set of arrows $Q_1$, one can define its valued representation $V=\big(\{V_i\}_{1\leq i\leq 2n}, 
\{ \rho_{\alpha} \}_{\alpha \in Q_1}\big)$ by assigning an $\mathbb{F}_2$-vector space $V_i$ for each vertex $i$ and an $\mathbb{F}_2$-linear map $\rho_\alpha: V_{s(\alpha)} \rightarrow V_{t(\alpha)}$ for each arrow $\alpha: s(\alpha) \rightarrow t(\alpha)$ in $Q_1$. One can then define the category $\operatorname{Rep}_{\mathbb{F}} (Q, \bf d)$ of all finite dimensional valued representations of $(Q, \bf d)$. The dimension vector of a valued representation $V$ is defined by ${{\bf{dim}} }V=(\operatorname{dim}_{\mathbb{F}_2} V_i) _{1\leq i \leq 2n} \in \mathbb{Z}^{2n}$.  The simple representation at vertex $i$ is denoted by $S_i$ for $1\leq i \leq 2n$. The projective representation and injective representation of  $(Q, {\bf{d}})$ at vertex $i$ is denoted by $P_i$ and $I_i$ for $1\leq i \leq 2n$, respectively. A representation $V$ of $(Q, {\bf{d}})$ is called rigid if $\Ext^1 (V,V)=0$.

Let $R=(r_{ij})_{2n\times 2n} $ be the matrix with $$
r_{ij}:=\mathrm{dim}_{\End (S_{i})}\Ext^{1}(S_j,S_i),  \,\,\,\,\,\, 
 \text{ for }1 \leq i,j\leq 2n.$$ We can easily get that 
\begin{equation*}\label{matrix-R}
R=\begin{pmatrix}
0 & 0 & 0 &0& \cdots & 0& 0 & 0  \\

1 & 0& 0 &0& \cdots & 0& 0 & 0  \\

0 & 1& 0 &0& \cdots & 0& 0 & 0  \\

0 & 0& 1 &0& \cdots & 0& 0 & 0  \\

\vdots & \vdots & \vdots & \vdots & & \vdots& \vdots& \vdots \\

0 & 0& 0 &0& \cdots & 0& 0 & 0  \\

0 & 0& 0 &0& \cdots & 1& 0 & 0  \\

1 & 0& 0 &0& \cdots & 0& 1 & 0  \\

\end{pmatrix}.
\end{equation*} 
and  $\tilde{B}=R^{tr}-R$.

Let $\mathcal C_{Q}$ be the cluster category of
the valued quiver $(Q, \bf d)$ with the shift functor denoted by
$[1]$ and the Aulander-Reiten translation functor denoted by $\tau$.  Without causing any confusion, we denote by the same symbol $\tau$ the Auslander-Reiten translation functor on  $\operatorname{Rep}_{\mathbb{F}} (Q, \bf d)$.
 Each object $\widetilde{V}$ in $\mathcal C_Q$ can be uniquely
decomposed as
$$\widetilde{V}=V \oplus P[1],$$
where $V$ is a representation of $(Q,\bf d)$ and $P$ is a projective
representation of $(Q,\bf d)$. Now assume that $P=\bigoplus_{1\leq i \leq 2n}m_iP_i$  with $m_i \geq 0$. We can extend the
definition of the dimension vector $\bf dim$ of
representation of $(Q,\bf d)$ to object in $\mathcal C_Q$
by setting
$${\bf {dim}} \widetilde{V}={\bf {dim}} V-(m_i)_{1\leq i \leq 2n}.$$
 For any two (valued) representations $V$ and $W$ of $(Q,\bf d)$,  the Euler form is given by
$$\langle V,W\rangle=\mathrm{dim}_{\mathbb{F}}\operatorname{Hom}(V,W)-\mathrm{dim}_{\mathbb{F}}\operatorname{Ext}^{1}(V,W).$$
Note that the Euler form only depends on the dimension vectors of
$V$ and $W$ and therefore we can identify $\langle V,W\rangle$ with  $\langle \Dim V,\Dim W\rangle$ without causing any confusion. 
 The matrix associated with this form is
$(\mathbf{I}_{2n}-R^{tr})D=2(\mathbf{I}_{2n}-R^{tr})$ where $D=\operatorname{diag}(2,2,\dots,2)$. For simplicity we write $\mathbf{I}:=\mathbf{I}_{2n}$.

If $V$ is a representation of $(Q,\bf d)$ and $P$ is a projective representation of $(Q,\bf d)$,  the quantum cluster character is given by
\[X_{V\oplus P[1]}=\sum_{\bf{e}} |\mathrm{Gr}_{\bf{e}} V|q^{-\frac{1}{2}
\langle
\bf{e},\bf{v}-\bf{e}\rangle}X^{-{B}{\bf e}-({\mathbf{I}}-{R}^{tr}){\bf v}+{\bf t}_P},\]
where ${{\bf v}}={{\bf {dim}}} V$, ${\bf t}_P=\Dim (P/\operatorname{rad} P)$ and
$\mathrm{Gr}_{\bf{e}}V$ denotes the set of all subepresentations $U$
of $V$ with $\Dim U= \bf{e}$.  Note
that
$$
X_{P[1]}=X_{\tau P}=X^{\Dim (P/\rad
P)}=X^{\Dim \soc I}=X_{I[-1]}=X_{\tau^{-1}I}.
$$
for any projective representation   $P$ and
injective representation  $I$ of $(Q,\bf d)$  with $\soc I=P/\rad
P.$ In particular, $X_{P_i[1]}=X_i$ for $ 1\leq i \leq 2n$.

In the following, for convention, we define $\mathcal{A}_q(Q):= \mathcal{A}_{|\mathbb{F}|}(\Lambda, \tilde{B})$ as the specialization of $\mathcal{A}_{\mathfrak{q}}(\Lambda,\tilde{B})$ by setting $\mathfrak{q}=q=|\mathbb{F}|$.  The bar-involution on $\mathcal{A}_q(Q)$ is defined by $$\overline{q^{\frac{r}{2}}X^{\mathbf{e}}}=q^{-\frac{r}{2}}X^{\mathbf{e}},\ \   \text{ for } r\in \mathbb{Z}  \text { and }\mathbf{e}\in \mathbb{Z}^{m}$$
which is naturally induced by the bar-involution on  $\mathcal{A}_{\mathfrak{q}}(\Lambda,\tilde{B})$. 
\begin{theorem}\cite{fanqin,rupel-2}\label{bijection}
The map sending an object $V$ to $X_V$ induces a bijection from the set of isomorphism classes of rigid objects in $\mathcal C_Q$ to the set of cluster monomials of $\mathcal{A}_q(Q)$.
 \end{theorem}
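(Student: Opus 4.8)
This is the quantum analogue of the Caldero--Keller cluster multiplication theorem, so the plan is to establish in turn that $V\mapsto X_V$ sends rigid objects to quantum cluster monomials, that it is surjective onto the set of quantum cluster monomials, and that it is injective. I would rely throughout on the structure of $\mathcal{C}_Q$ as the cluster category of a hereditary category: an object is rigid precisely when it is a direct sum of pairwise compatible indecomposable rigid objects, any such family extends to a cluster-tilting object, and every cluster-tilting object is obtained from $T_0 := P_1[1]\oplus\cdots\oplus P_{2n}[1]$ by a finite sequence of mutations (the cluster-tilting graph of an acyclic, here affine $\widetilde{A}$, quiver is connected).

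\emph{Well-definedness.} For the initial seed one has $X_{P_i[1]} = X_i$. The quantum cluster multiplication (exchange) formulas of \cite{dx-2,fanqin,rupel-2} say that if a cluster-tilting object $T'$ is obtained from $T$ by the mutation $\mu_k$, then the quantum seed attached to $T'$ is $\mu_k$ of the quantum seed attached to $T$; so by induction on the length of a mutation sequence from $T_0$, the image $X_W$ of any indecomposable rigid object $W$ is a quantum cluster variable. For a general rigid $V = \bigoplus_j W_j^{\oplus a_j}$ with $W_j$ indecomposable, rigidity forces $\Ext^1(W_j,W_{j'})=0$ for all $j,j'$; feeding this into the multiplication formula in the vanishing-$\Ext^1$ case yields $X_V = \mathfrak{q}^{N/2}\prod_j X_{W_j}^{a_j}$ for some $N\in\mathbb{Z}$, and since the $W_j$ all appear in one cluster-tilting object, this is a quantum cluster monomial. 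Conversely, \emph{surjectivity} is the reverse bookkeeping: any quantum cluster monomial $M$ lies in a cluster reached from $T_0$ by a finite mutation sequence; by the induction above each cluster variable of that seed equals $X_W$ for an indecomposable rigid $W$, and multiplying shows $M=X_V$ for the associated rigid direct sum $V$.

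\emph{Injectivity} is the step I expect to carry the real content, once the multiplication formulas above are taken as input. The cleanest argument is specialization: setting $\mathfrak{q}=1$ turns $X_V$ into the classical Caldero--Chapoton character of $V$ (the conventions $\tilde{B}=R^{tr}-R$ and Euler matrix $2(\mathbf{I}-R^{tr})$ are exactly those making the quantum character degenerate to the commutative one), and for the hereditary algebra of the affine $\widetilde{A}_{2n-1,1}$ valued quiver the classical character is injective on isomorphism classes of rigid objects; hence $X_V = X_W\Rightarrow V\cong W$. If one prefers a self-contained argument, the $\mathbf{e}=0$ term of $X_V$ has coefficient $|\mathrm{Gr}_0 V|=1$ and exponent equal to the $g$-vector $-(\mathbf{I}-R^{tr})\,\Dim V' + \mathbf{t}_P$ (writing $V = V'\oplus P[1]$ with $V'$ a representation); using the nondegeneracy of $\Lambda$ (indeed $\tilde{B}^{tr}\Lambda = 2\mathbf{I}$) this is the unique extremal exponent of $X_V$ for a suitable term order, hence an invariant of $X_V$, and sign-coherence of $g$-vectors (equivalently: $\mathbf{I}-R^{tr}$ is unitriangular, a rigid representation is determined up to isomorphism by its dimension vector since it lies in the open orbit, and $P\mapsto\mathbf{t}_P$ is a bijection on projective parts) recovers $V$ up to isomorphism.

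The point where I expect to have to be most careful is the interface in the first paragraph, namely checking that $X_{?}$ really does intertwine cluster-category mutation with quantum seed mutation — i.e. that $X_V$ is independent of which seed is used to express it — since this is exactly what forces the sign conventions in $\Lambda$, $\tilde{B}=R^{tr}-R$ and the matrix $\mathbf{I}-R^{tr}$ to be compatible; this is the content of the cited multiplication formulas, and once it is in place the bijection follows by combining the three steps above.
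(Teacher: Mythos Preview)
The paper does not supply its own proof of this statement: Theorem~\ref{bijection} is quoted verbatim from \cite{fanqin,rupel-2} and used as a black box, so there is no in-paper argument to compare your proposal against.

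Your outline is a fair high-level summary of how the cited references proceed (connectedness of the cluster-tilting graph for acyclic $Q$, induction on mutation length to match indecomposable rigids with cluster variables via the exchange formulas, and injectivity through the extremal/$g$-vector term). One caution on your self-contained injectivity argument: the claim that a rigid representation is determined up to isomorphism by its dimension vector because it ``lies in the open orbit'' is a geometric statement over an algebraically closed field; here you are working over the finite field $\mathbb{F}_2$, so that step as written needs more care (the actual argument in \cite{rupel-2} goes through the $g$-vector/tropical degree and does not rely on orbit geometry over $\mathbb{F}$). Otherwise the strategy is sound, but there is simply no proof in this paper to set it beside.
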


Let $V$ and $W$ be representations of $(Q,\bf d)$.  From a morphism $\theta:W\to \tau V,$ we can get an exact sequence
 \begin{equation}
   \begin{tikzpicture}
   \matrix (m) [matrix of math nodes, row sep=3em, column sep=2.5em, text height=1.5ex, text depth=0.25ex]
    {0 & D & W & \tau V & \tau A \oplus I & 0\\};
   \path[->,font=\scriptsize]
    (m-1-1) edge (m-1-2)
    (m-1-2) edge (m-1-3)
    (m-1-3) edge node[auto] {$\theta$}(m-1-4)
    (m-1-4) edge (m-1-5)
    (m-1-5) edge (m-1-6);
  \end{tikzpicture}
 \end{equation}
 where $D=\operatorname{ker} \theta$, $\tau A \oplus I=\operatorname{coker} \theta$, $I$ is injective, $A$ and $V$ have the same maximal projective summand.  

\begin{theorem}\cite{dx-2, rupel-2}\label{m-1}
Assume  $V$ and $W$ are representations of $(Q,\bf d)$ with a unique (up to scalar) nontrivial extension $E\in\Ext^1(V,W)$, in particular $\dim_{\End(V)}\Ext^1(V,W)$$=1$.  Let $\theta\in\Hom(W,\tau V)$ be the equivalent morphism with $A, D, I$ described as above.  Furthermore assume that $\Hom(A\oplus D,I)=0=\Ext^1(A,D)$.  Then we have that
 \begin{align*}
 X_VX_W  = &q^{\half\Lambda((\mathbf{I}-R^{tr}){\Dim V},(\mathbf{I}-R^{tr}){\Dim W})}X_E\\
& + q^{\half\Lambda((\mathbf{I}-R^{tr}){\Dim V},(\mathbf{I}-R^{tr}){\Dim W})+\half\langle{V},{W}\rangle-\half\langle {A}, {D}\rangle}X_{D\oplus A\oplus I[-1]}.
\end{align*} 
 \end{theorem}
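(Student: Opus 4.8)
The plan is to expand both sides in the quantum torus $\mathcal{T}_{\mathfrak{q}}$ (specializing $\mathfrak{q}=q=|\mathbb{F}|$ at the end), reduce the asserted equality to a monomial-by-monomial comparison, split that comparison into a mechanical bookkeeping of powers of $\mathfrak{q}$ and a geometric counting identity for subrepresentations. First I would fix $\mathbf{v}=\Dim V$, $\mathbf{w}=\Dim W$ and write $\mathbf{m}_V(\mathbf{e})=-B\mathbf{e}-(\mathbf{I}-R^{tr})\mathbf{v}$, so that $X_V=\sum_{\mathbf{e}}|\mathrm{Gr}_{\mathbf{e}}V|\,\mathfrak{q}^{-\half\langle\mathbf{e},\mathbf{v}-\mathbf{e}\rangle}X^{\mathbf{m}_V(\mathbf{e})}$ and similarly $X_W=\sum_{\mathbf{f}}|\mathrm{Gr}_{\mathbf{f}}W|\,\mathfrak{q}^{-\half\langle\mathbf{f},\mathbf{w}-\mathbf{f}\rangle}X^{\mathbf{m}_W(\mathbf{f})}$. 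Multiplying and using $X^{\mathbf{a}}X^{\mathbf{b}}=\mathfrak{q}^{\half\Lambda(\mathbf{a},\mathbf{b})}X^{\mathbf{a}+\mathbf{b}}$, every monomial of $X_VX_W$ has the form $X^{-B(\mathbf{e}+\mathbf{f})-(\mathbf{I}-R^{tr})(\mathbf{v}+\mathbf{w})}$. Since $\Dim E=\mathbf{v}+\mathbf{w}$, the monomials of $X_E$ form the family $X^{-B\mathbf{g}-(\mathbf{I}-R^{tr})(\mathbf{v}+\mathbf{w})}$, $0\le\mathbf{g}\le\mathbf{v}+\mathbf{w}$; and the exact sequence $0\to D\to W\to\tau V\to\tau A\oplus I\to 0$ together with $X_{I[-1]}=X^{\Dim\soc I}$ and $X_{P_i[1]}=X_i$ shows, after the standard identification of (extended) dimension vectors, that $X_{D\oplus A\oplus I[-1]}$ lies in the same family. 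Thus the theorem is equivalent to an identity of coefficients of $X^{-B\mathbf{g}-(\mathbf{I}-R^{tr})(\mathbf{v}+\mathbf{w})}$ for each $\mathbf{g}$.

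Next I would carry out the $\mathfrak{q}$-power bookkeeping. Using $\tilde B^{tr}\Lambda=2\mathbf{I}$, the skew-symmetry $B^{tr}=-B$, and the fact that the Euler form has matrix $2(\mathbf{I}-R^{tr})$, the commutation exponent $\half\Lambda(\mathbf{m}_V(\mathbf{e}),\mathbf{m}_W(\mathbf{f}))$ decomposes as $\half\Lambda((\mathbf{I}-R^{tr})\mathbf{v},(\mathbf{I}-R^{tr})\mathbf{w})$ — a constant, which becomes the global prefactor of the formula — plus terms of Euler-form type in $\mathbf{e}$ and $\mathbf{f}$; combining these with $-\half\langle\mathbf{e},\mathbf{v}-\mathbf{e}\rangle-\half\langle\mathbf{f},\mathbf{w}-\mathbf{f}\rangle$ and with $-\half\langle\mathbf{g},\mathbf{v}+\mathbf{w}-\mathbf{g}\rangle$ on the $X_E$ side produces the clean weight $\langle\mathbf{f},\mathbf{v}-\mathbf{e}\rangle$. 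After dividing out the global prefactor, the theorem reduces to the numerical identity
\[\sum_{\mathbf{e}+\mathbf{f}=\mathbf{g}}\mathfrak{q}^{\langle\mathbf{f},\mathbf{v}-\mathbf{e}\rangle}\,|\mathrm{Gr}_{\mathbf{e}}V|\cdot|\mathrm{Gr}_{\mathbf{f}}W|\;=\;|\mathrm{Gr}_{\mathbf{g}}E|\;+\;\mathfrak{q}^{\,\half\langle V,W\rangle-\half\langle A,D\rangle}\,c_{\mathbf{g}}\bigl(X_{D\oplus A\oplus I[-1]}\bigr),\]
where $c_{\mathbf{g}}$ extracts the coefficient of the monomial above; the residual Euler-form bookkeeping inside $c_{\mathbf{g}}$ is again mechanical.

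The geometric heart is this counting identity. Given the unique nonsplit $0\to W\xrightarrow{\iota}E\xrightarrow{\pi}V\to 0$ and a subrepresentation $U\subseteq E$, set $U_W=\iota^{-1}(U)$ and $U_V=\pi(U)$; then $\Dim U=\Dim U_W+\Dim U_V$, which defines a morphism $\mathrm{Gr}_{\mathbf{g}}E\to\bigsqcup_{\mathbf{e}+\mathbf{f}=\mathbf{g}}\mathrm{Gr}_{\mathbf{f}}W\times\mathrm{Gr}_{\mathbf{e}}V$. Its fibre over $(U_W,U_V)$ is a torsor under $\Hom(U_V,W/U_W)$ when the induced sequence $0\to W/U_W\to\pi^{-1}(U_V)/\iota(U_W)\to U_V\to 0$ splits, and empty otherwise; counting $\mathbb{F}$-points and using $|\Hom(U_V,W/U_W)|=\mathfrak{q}^{\dim\Hom(U_V,W/U_W)}$ shows that the left-hand side equals $|\mathrm{Gr}_{\mathbf{g}}E|$ plus an error supported on the pairs for which that sequence is nonsplit. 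This nonsplit locus is governed by the class of $E$, equivalently by $\theta$ under Auslander--Reiten duality; through $0\to D\to W\to\tau V\to\tau A\oplus I\to 0$ one identifies it with the appropriate subrepresentation varieties of $D$, of $A$, and of $\soc I$, and the hypotheses $\Hom(A\oplus D,I)=0=\Ext^1(A,D)$ are exactly what forces the intervening $\Hom$ and $\Ext^1$ spaces to have their expected dimensions, so that the error reassembles as $\mathfrak{q}^{\half\langle V,W\rangle-\half\langle A,D\rangle}\,c_{\mathbf{g}}(X_{D\oplus A\oplus I[-1]})$. This step is a Green/Riedtmann--Peng-type fibration argument in the Hall algebra of $\operatorname{Rep}_{\mathbb{F}}(Q,\mathbf{d})$.

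I expect Step 3 to be the main obstacle: showing the morphism $\mathrm{Gr}_{\mathbf{g}}E\to\bigsqcup\mathrm{Gr}_{\mathbf{f}}W\times\mathrm{Gr}_{\mathbf{e}}V$ has affine-space fibres of uniform dimension over every finite field $\mathbb{F}$ (so that the point counts are polynomial in $\mathfrak{q}$ and the comparison is an honest equality, not merely a generic one), and matching the complementary nonsplit locus with the subvarieties of $D$, $A$ and $\soc I$ compatibly with the powers of $\mathfrak{q}$ — which is precisely where the dictionary between the extension $E$ and the morphism $\theta$, and the vanishing $\Hom(A\oplus D,I)=0=\Ext^1(A,D)$, become indispensable. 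Once the numerical identity is established over all $\mathbb{F}$, setting $\mathfrak{q}=q=|\mathbb{F}|$ gives the stated formula in $\mathcal{A}_q(Q)$; alternatively, this is the special case relevant to our valued quiver of the general multiplication formulas of Rupel~\cite{rupel-2} and Ding--Xu~\cite{dx-2}.
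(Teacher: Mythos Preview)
The paper does not contain a proof of this statement: Theorem~\ref{m-1} is quoted from the literature (Ding--Xu~\cite{dx-2} and Rupel~\cite{rupel-2}) and is used as a black box throughout. So there is no ``paper's own proof'' to compare against; your last sentence---that this is the special case of the general multiplication formulas in~\cite{rupel-2,dx-2}---is exactly how the paper treats it.

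Your sketch is a reasonable outline of how those references actually prove the formula: expand both sides in the quantum torus, separate a global $\Lambda$-prefactor, and reduce to a Hall-algebra/Green-type counting identity comparing $|\mathrm{Gr}_{\mathbf{g}}E|$ with the convolution $\sum_{\mathbf{e}+\mathbf{f}=\mathbf{g}}|\mathrm{Gr}_{\mathbf{e}}V|\cdot|\mathrm{Gr}_{\mathbf{f}}W|$ weighted by Euler-form powers of $q$, with the discrepancy accounted for by the $\theta$-data $(D,A,I)$. That is indeed the architecture of Rupel's proof. One caution on your bookkeeping: the weight you arrive at after combining the commutation exponent with the Grassmannian exponents should be checked carefully against the sign conventions in the paper's Euler form (matrix $2(\mathbf{I}-R^{tr})$) and the compatibility relation $\tilde B^{tr}\Lambda=2\mathbf{I}$; in Rupel's computation the cross-term that survives is $\langle\mathbf{e},\mathbf{w}-\mathbf{f}\rangle$ rather than $\langle\mathbf{f},\mathbf{v}-\mathbf{e}\rangle$, and getting this sign right is what makes the fibre dimension $\dim\Hom(U_V,W/U_W)$ match. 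But since the paper simply cites the result, for the purposes of this paper no proof is required.
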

 
 Let $W$ be a  representation and $I$ an injective representation of $(Q, {\bf d})$.  Let $\nu$ be the Nakayama functor on $\operatorname{Rep}_{\mathbb{F}} (Q, \bf d)$. 
 Write $P=\nu^{-1}(I)$ and note that $P$ is projective with $\soc I\cong P/\rad P$ and  $\End(I)\cong \End(P)$.  From morphisms $\theta: W\to I$ and $\gamma:P\to W$ we get exact sequences
 \begin{center}
  \begin{tikzpicture}
   \matrix (m) [matrix of math nodes, row sep=1em, column sep=2.5em, text height=1.5ex, text depth=0.25ex]
    {0 & G & W & I & I' & 0\,,\\ 0 & P' & P & W & F & 0\,,\\};
   \path[->,font=\scriptsize]
    (m-1-1) edge (m-1-2)
    (m-1-2) edge (m-1-3)
    (m-1-3) edge node[auto] {$\theta$} (m-1-4)
    (m-1-4) edge (m-1-5)
    (m-1-5) edge (m-1-6)
    (m-2-1) edge (m-2-2)
    (m-2-2) edge (m-2-3)
    (m-2-3) edge node[auto] {$\gamma$} (m-2-4)
    (m-2-4) edge (m-2-5)
    (m-2-5) edge (m-2-6);
  \end{tikzpicture}
 \end{center}
 where $G=\ker\theta$, $I'=\operatorname{coker}\theta$ is injective, $P'=\ker\gamma$ is projective, and $F=\operatorname{coker}\gamma$.

\begin{theorem}\cite{dx-2,rupel-2}\label{m-2}
Let $W$, $I$ and $P$  be the  representations of $(Q, {\bf d})$ defined as above.  Assume that there exist unique (up to scalar) morphisms $f\in\Hom(W,I)$ and $g\in\Hom(P,W)$, in particular $\dim_{\End(I)}\Hom(W,I)=\dim_{\End(P)}\Hom(P,W)=1$.  Define $F$, $G$, $I'$, $P'$ as above and assume further that $\Hom(P',F)=\Hom(G,I')=0$.  Then we have
\begin{align*}
X_WX_{I[-1]} =& q^{-\half\Lambda((\mathbf{I}-R^{tr}){\Dim W},(\mathbf{I}-R^{tr}){\Dim I})}X_{G\oplus I'[-1]}\\
& + q^{-\half\Lambda((\mathbf{I}-R^{tr}){\Dim W},(\mathbf{I}-R^{tr}){\Dim I})-\half\mathrm{dim}_{\mathbb{F}}\operatorname{End}(I)}X_{F\oplus P'[1]}.
\end{align*}   
\end{theorem}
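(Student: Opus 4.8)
The plan is to expand both sides inside the quantum torus $\mathcal{T}_q$ by means of the quantum cluster character formula and then to compare the coefficient of each Laurent monomial. The key simplification is that $P=\nu^{-1}(I)$ is projective with $P/\rad P\cong\soc I$, so $X_{I[-1]}=X^{\Dim\soc I}$ is a Laurent monomial, and likewise $X_{I'[-1]}=X^{\Dim\soc I'}$ and $X_{P'[1]}=X^{\Dim(P'/\rad P')}$ since $I'$ is injective and $P'$ is projective. Multiplying out, both sides become $\ZZ[q^{\pm\half}]$-combinations of monomials $X^{-B\mathbf{e}-(\mathbf{I}-R^{tr})\mathbf{v}+\mathbf{t}}$, and, since $\det\tilde{B}\neq 0$ forces the exponent to determine $(\mathbf{e},\mathbf{v})$, the identity collapses to a purely combinatorial statement: for the relevant dimension vectors, the quiver Grassmannian number $|\mathrm{Gr}_{\mathbf{e}}W|$ must equal a prescribed $q$-weighted sum of one term $|\mathrm{Gr}_{\mathbf{e}_1}G|$ and one term $|\mathrm{Gr}_{\mathbf{e}_2}F|$, and this over every finite extension of the base field.

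To establish this identity I would stratify the quiver Grassmannians of $W$ using the two exact sequences. Writing $N=\im\gamma\subseteq W$, so that $0\to N\to W\to F\to 0$ and $0\to P'\to P\to N\to 0$, and recalling $G=\ker\theta\subseteq W$ with $W/G\hookrightarrow I$ and cokernel $I'$, one splits $\mathrm{Gr}_{\mathbf{e}}W$ according to how a subrepresentation $U$ meets the distinguished subobjects $G$ and $N$, into one locus that fibers over a quiver Grassmannian of $F$ and one that fibers over a quiver Grassmannian of $G$. The hypotheses $\Hom(P',F)=0=\Hom(G,I')$, together with the heredity of $(Q,\mathbf{d})$ (which makes $P'$ genuinely projective and $I'$ genuinely injective, and lets the Euler form compute the relevant $\Hom$ and $\Ext$ dimensions), are precisely what guarantees that these fibrations are iterated affine bundles of the expected rank; the uniqueness of $f$ and $g$ pins down the closed boundary strata. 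Counting $\Fbb_k$-points then yields the desired Grassmannian identity.

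The main obstacle, and the technical heart of the argument, is the bookkeeping of the powers of $q$: one must verify that the factors $q^{-\half\langle\mathbf{e},\,\cdot\,\rangle}$ coming from the cluster character formula, the twists $q^{\Lambda(\cdot,\cdot)/2}$ produced by multiplying monomials in $\mathcal{T}_q$, and the dimensions of the affine fibers assemble exactly into the prefactor $q^{-\half\Lambda((\mathbf{I}-R^{tr})\Dim W,(\mathbf{I}-R^{tr})\Dim I)}$ on the first summand and into that same prefactor times the extra $q^{-\half\dim_{\Fbb}\End(I)}$ on the second. This computation rests on the compatibility relation $\tilde{B}^{tr}\Lambda=2\mathbf{I}$, the identity $\tilde{B}=R^{tr}-R$, and the fact that the Euler form of $(Q,\mathbf{d})$ has matrix $(\mathbf{I}-R^{tr})D$; the surplus $-\half\dim_{\Fbb}\End(I)$ arises from comparing the $F\oplus P'[1]$-stratum with the $G\oplus I'[-1]$-stratum, reflecting that trading the map $g\colon P\to W$ for $f\colon W\to I$ costs one copy of the automorphism space of $I$.

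Finally, the resulting identity is an equality of genuine polynomials in $q$ valid for infinitely many prime powers, hence holds identically, which gives the formula in the stated form. Alternatively, and more conceptually, one may obtain Theorem~\ref{m-2} by specializing the general quantum cluster multiplication theorem for acyclic valued quivers to the pair of cluster category objects $W$ and $I[-1]$: the space $\Ext^1_{\mathcal{C}_Q}(W,I[-1])$ is one-dimensional under the hypotheses (note that $\Hom(P,W)\cong D\Hom(W,I)$ by the Auslander--Reiten formula, as $P=\nu^{-1}I$), and $G\oplus I'[-1]$ and $F\oplus P'[1]$ are exactly the two objects attached to the two triangles through the generator of this space, recovering both summands and, after tracking the quantum grading, both prefactors.
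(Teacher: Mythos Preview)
The paper does not supply a proof of this theorem: it is stated with the citation \cite{dx-2,rupel-2} and used as a black box, with no accompanying proof environment. There is therefore nothing in the paper to compare your proposal against.

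That said, your sketch is broadly in line with how the cited references establish such multiplication formulas. In particular, Rupel's proof in \cite{rupel-2} does proceed by expanding the quantum cluster characters, stratifying the quiver Grassmannian of $W$ via the maps to $I$ and from $P$, and tracking the $q$-powers through the compatibility condition and the Euler form; your second, ``conceptual'' alternative (deducing the formula from the general cluster-category multiplication theorem applied to the pair $W$, $I[-1]$) is also correct in spirit. One point to be careful about: your claim that ``$\det\tilde{B}\neq 0$ forces the exponent to determine $(\mathbf{e},\mathbf{v})$'' is not quite what is needed, since on the two sides both the dimension vector $\mathbf{v}$ and the shift $\mathbf{t}$ change (from $\Dim W$ to $\Dim G$ or $\Dim F$, and from $\Dim\soc I$ to $\Dim\soc I'$ or $\Dim(P'/\rad P')$); the matching of exponents therefore requires the exact-sequence relations among these vectors, not merely injectivity of $B$. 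This is handled in \cite{rupel-2} but deserves explicit mention if you intend your sketch to stand on its own.
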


\begin{theorem}\cite{DShCh,rupel-2}\label{m-3}
  Let $V$ and $W$ be representations of $(Q,\bf d)$ with $\Ext^1(V,W)=0$, then we have
  \[X_VX_W=q^{\half\Lambda((\mathbf{I}-R^{tr}){\Dim V},(\mathbf{I}-R^{tr}){\Dim W})}X_{V\oplus W}.\]
  In addition if $\Ext^1(W,V)=0$, then we have
  \[X_VX_W = q^{\Lambda((\mathbf{I}-R^{tr}){\Dim V},(\mathbf{I}-R^{tr}){\Dim W})}X_WX_V.\]
 \end{theorem}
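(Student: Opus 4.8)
The plan is to prove the first formula by expanding both sides directly from the definition of the quantum cluster character and matching them term by term; the second formula will then follow in one line. The only input beyond formal manipulation is a decomposition of the set of subrepresentations of $V\oplus W$, and this is precisely where the hypothesis $\Ext^1(V,W)=0$ is used.

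The first thing I would establish is the counting identity
\[
|\mathrm{Gr}_{\mathbf g}(V\oplus W)|=\sum_{\mathbf e+\mathbf f=\mathbf g}|\mathrm{Gr}_{\mathbf e}V|\cdot|\mathrm{Gr}_{\mathbf f}W|\cdot q^{\langle \mathbf e,\,\mathbf w-\mathbf f\rangle},
\]
where $\mathbf v:=\Dim V$ and $\mathbf w:=\Dim W$. To prove it, attach to a subrepresentation $U\subseteq V\oplus W$ the pair $(U_1,U_2):=(\pi_V(U),\,U\cap W)$, where $\pi_V$ is the projection onto the first summand; then $\Dim U=\Dim U_1+\Dim U_2$. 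For fixed $U_1\subseteq V$ and $U_2\subseteq W$, a short argument shows that the subrepresentations $U$ with $\pi_V(U)=U_1$ and $U\cap W=U_2$ are exactly the graphs of homomorphisms $U_1\to W/U_2$, so the fibre of $U\mapsto(U_1,U_2)$ over this pair has cardinality $|\Hom(U_1,W/U_2)|$. Because the category of representations is hereditary, $\Ext^1(U_1,W/U_2)$ is a quotient of $\Ext^1(V,W/U_2)$, which is itself a quotient of $\Ext^1(V,W)=0$; hence $\Ext^1(U_1,W/U_2)=0$ and $|\Hom(U_1,W/U_2)|=q^{\langle\Dim U_1,\,\mathbf w-\Dim U_2\rangle}$. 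Summing over all pairs with $\Dim U_1+\Dim U_2=\mathbf g$ yields the identity.

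Next I would feed this into the definitions. Expanding $X_VX_W$ via $X^{\mathbf a}X^{\mathbf b}=q^{\Lambda(\mathbf a,\mathbf b)/2}X^{\mathbf a+\mathbf b}$, the monomial attached to a pair $(\mathbf e,\mathbf f)$ carries the exponent $-B(\mathbf e+\mathbf f)-(\mathbf I-R^{tr})(\mathbf v+\mathbf w)$, which is exactly the exponent of the $\mathrm{Gr}_{\mathbf e+\mathbf f}$-term of $X_{V\oplus W}$ since $\Dim(V\oplus W)=\mathbf v+\mathbf w$. Substituting the counting identity into $X_{V\oplus W}$, it then suffices to verify, for each pair $(\mathbf e,\mathbf f)$, a single scalar identity among exponents of $q$ which is bilinear in the pairs $(\mathbf e,\mathbf v)$ and $(\mathbf f,\mathbf w)$. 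I would check this identity by expanding the left-hand $\Lambda$-term by bilinearity into the four pieces $\Lambda(B\mathbf e,B\mathbf f)$, $\Lambda(B\mathbf e,(\mathbf I-R^{tr})\mathbf w)$, $\Lambda((\mathbf I-R^{tr})\mathbf v,B\mathbf f)$ and $\Lambda((\mathbf I-R^{tr})\mathbf v,(\mathbf I-R^{tr})\mathbf w)$, and then invoking the structural relations at hand: $\tilde B=B=R^{tr}-R$ (so $B^{tr}=-B$), the compatibility $\tilde B^{tr}\Lambda=2\mathbf I$ (which gives $\Lambda B=B\Lambda=-2\mathbf I$), and the fact that the Euler form has matrix $2(\mathbf I-R^{tr})$ (which gives $\langle\mathbf a,\mathbf b\rangle-\langle\mathbf b,\mathbf a\rangle=-2\,\mathbf a^{tr}B\mathbf b$). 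These turn the first three pieces into Euler pairings, and after expanding all Euler forms bilinearly the two sides collapse to the same expression. For the second formula I would apply the first one to the ordered pair $(W,V)$, which is legitimate because $\Ext^1(W,V)=0$, and then use $W\oplus V\cong V\oplus W$ together with the skew-symmetry of $\Lambda$ to obtain $X_WX_V=q^{-\frac12\Lambda((\mathbf I-R^{tr})\mathbf v,(\mathbf I-R^{tr})\mathbf w)}X_{V\oplus W}$; dividing this into the first relation gives the asserted commutation.

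The step I expect to be the main obstacle is the scalar exponent identity above: both $\Lambda$ and the Euler form are non-symmetric, so the computation hinges on placing every transpose correctly and on keeping straight the difference between $\dim_{\mathbb F}$ and $\dim_{\End S_i}$ --- equivalently, the symmetrizer $D=\operatorname{diag}(2,\dots,2)$ built into the Euler matrix. A smaller point to watch, already present in the counting identity, is that the argument uses the Ext-vanishing in the precise direction $\Ext^1(V,W)=0$ and is matched to the pair $(\pi_V(U),\,U\cap W)$; grouping instead by $(U\cap V,\,\pi_W(U))$ would require $\Ext^1(W,V)=0$ instead. Once this bookkeeping is under control, the rest of the proof is routine.
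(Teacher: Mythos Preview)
The paper does not give its own proof of this theorem; it is quoted from the references \cite{DShCh,rupel-2} and used as a black box. So there is nothing in the paper to compare your argument against directly.

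That said, your proposal is correct and is essentially the standard argument from those references. The Grassmannian decomposition via $U\mapsto(\pi_V(U),\,U\cap W)$ with fibre $\Hom(U_1,W/U_2)$ is exactly the mechanism used there, and the hereditary argument reducing $\Ext^1(U_1,W/U_2)$ to a subquotient of $\Ext^1(V,W)=0$ is the right way to invoke the hypothesis. Your scalar bookkeeping also checks: in the paper's setting $B=R^{tr}-R$ is genuinely skew-symmetric and $B^{tr}\Lambda=2\mathbf I$, so indeed $\Lambda B=B\Lambda=-2\mathbf I$, and with the Euler matrix $2(\mathbf I-R^{tr})$ one gets $\Lambda(B\mathbf e,B\mathbf f)=\langle\mathbf f,\mathbf e\rangle-\langle\mathbf e,\mathbf f\rangle$, $\Lambda(B\mathbf e,(\mathbf I-R^{tr})\mathbf w)=\langle\mathbf e,\mathbf w\rangle$, and $\Lambda((\mathbf I-R^{tr})\mathbf v,B\mathbf f)=-\langle\mathbf f,\mathbf v\rangle$, which is precisely what the term-by-term comparison needs. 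The deduction of the second formula from the first via skew-symmetry of $\Lambda$ is immediate. The only caveat is that your identities $\Lambda B=-2\mathbf I$ and $B^{tr}=-B$ use the equal valuation $\mathbf d=(2,\dots,2)$; the cited references prove the statement for arbitrary valued quivers, where $D$ need not be scalar and $B$ need not be skew-symmetric, but for the purposes of this paper your argument suffices.
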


 \begin{theorem}\cite{DShCh,rupel-2}\label{th:init-comm}
  Let $V$ and $I$ be representations of $(Q,\bf d)$ such that $I$ is injective and $V\oplus I[-1]$ is rigid, then we have
  \[X_VX^{(\mathbf{I}-R^{tr})\Dim I} = q^{-\half\Lambda((\mathbf{I}-R^{tr}){\Dim V}, (\mathbf{I}-R^{tr})\Dim I)} X_{V\oplus I[-1]}\]
  and
  \[ X_VX^{(\mathbf{I}-R^{tr})\Dim I} =q^{-\Lambda((\mathbf{I}-R^{tr}){\Dim V}, (\mathbf{I}-R^{tr})\Dim I)} X^{(\mathbf{I}-R^{tr})\Dim I} X_V\]
 \end{theorem}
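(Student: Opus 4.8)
The plan is to prove the first identity by a direct computation with the quantum cluster character formula, after rewriting the object $V\oplus I[-1]$ in the form (representation)$\,\oplus\,$(projective)$[1]$, and then to read off the commutation relation from it. First I would record two preliminary facts. Since the Serre functor on $\mathcal C_Q$ is $\nu=\tau[1]$, we have $I[-1]\cong\tau^{-1}I\cong P[1]$ with $P:=\nu^{-1}(I)$ projective; hence $V\oplus I[-1]\cong V\oplus P[1]$ and ${\bf t}_P=\Dim(P/\rad P)=\Dim\soc I$, which is already in the excerpt. Moreover $\Dim\soc I=(\mathbf I-R^{tr})\Dim I$: for every representation $M$ one has $\langle\Dim M,\Dim I\rangle=\dim_{\mathbb F}\Hom(M,I)=2\,(\Dim M)\cdot\Dim\soc I$ (because $\Ext^1(M,I)=0$ and $\dim_{\mathbb F}\Hom(M,I_j)=\dim_{\mathbb F}M_j=2(\Dim M)_j$), while the Euler form has matrix $2(\mathbf I-R^{tr})$, so comparing the two gives the claim. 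In particular $X^{(\mathbf I-R^{tr})\Dim I}=X^{{\bf t}_P}=X_{P[1]}=X_{I[-1]}$.

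Now write $X_V=\sum_{\bf e}c_{\bf e}X^{g_{\bf e}}$ with $g_{\bf e}:=-B{\bf e}-(\mathbf I-R^{tr}){\bf v}$ and $c_{\bf e}:=|\mathrm{Gr}_{\bf e}V|\,q^{-\frac12\langle{\bf e},{\bf v}-{\bf e}\rangle}$; the defining formula applied to $V\oplus P[1]$ reads $X_{V\oplus P[1]}=\sum_{\bf e}c_{\bf e}X^{g_{\bf e}+{\bf t}_P}$, whereas in the quantum torus $X_VX^{{\bf t}_P}=\sum_{\bf e}c_{\bf e}\,q^{\frac12\Lambda(g_{\bf e},{\bf t}_P)}X^{g_{\bf e}+{\bf t}_P}$. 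The compatibility identity $\tilde B^{tr}\Lambda=2\mathbf I$ yields $\Lambda(B{\bf e},{\bf t}_P)=2({\bf e}\cdot{\bf t}_P)$, hence $\Lambda(g_{\bf e},{\bf t}_P)=-2({\bf e}\cdot{\bf t}_P)-\Lambda((\mathbf I-R^{tr}){\bf v},{\bf t}_P)$. The crucial point is that ${\bf e}\cdot{\bf t}_P=0$ for every ${\bf e}$ with $c_{\bf e}\ne0$: rigidity of $V\oplus I[-1]$ forces $0=\Ext^1_{\mathcal C_Q}(V,I[-1])=\Hom_{\mathcal C_Q}(V,I)$, which has $\Hom(V,I)$ as a direct summand, so $\Hom(V,I)=0$ and therefore $V_j=0$ for every vertex $j$ in the support of $\soc I$; since $c_{\bf e}\ne0$ forces ${\bf e}$ to be the dimension vector of a subrepresentation of $V$, this gives ${\bf e}\cdot{\bf t}_P={\bf e}\cdot\Dim\soc I=0$. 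Substituting, the power of $q$ collapses to the ${\bf e}$-independent constant $-\frac12\Lambda((\mathbf I-R^{tr})\Dim V,(\mathbf I-R^{tr})\Dim I)$, which is exactly the first formula.

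For the commutation relation I would run the same computation for $X^{{\bf t}_P}X_V=\sum_{\bf e}c_{\bf e}\,q^{\frac12\Lambda({\bf t}_P,g_{\bf e})}X^{g_{\bf e}+{\bf t}_P}$; using $\Lambda({\bf t}_P,g_{\bf e})=-\Lambda(g_{\bf e},{\bf t}_P)$ together with ${\bf e}\cdot{\bf t}_P=0$, this equals $q^{\frac12\Lambda((\mathbf I-R^{tr})\Dim V,(\mathbf I-R^{tr})\Dim I)}X_{V\oplus I[-1]}$, and eliminating $X_{V\oplus I[-1]}$ between the two identities produces the stated $q^{-\Lambda(\cdots)}$ twist. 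The only genuinely nontrivial ingredient is the homological fact ${\bf e}\cdot\Dim\soc I=0$ extracted from rigidity — without it the coefficient of each monomial $X^{g_{\bf e}+{\bf t}_P}$ in $X_VX^{{\bf t}_P}$ would depend on ${\bf e}$ and no closed formula would hold; the remainder is routine bookkeeping with the quantum-torus relation and the compatibility condition. One may regard this statement as the ``initial cluster variable'' analogue of the multiplication formulas, obtained by letting the second factor degenerate to the monomial $X^{(\mathbf I-R^{tr})\Dim I}=X_{I[-1]}$.
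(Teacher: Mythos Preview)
The paper does not prove this theorem; it is quoted from \cite{DShCh,rupel-2} without argument, so there is no ``paper's own proof'' to compare against. Your direct computation is correct and is essentially how the result is obtained in those references: expand $X_V$ via the cluster character, multiply by the monomial $X^{{\bf t}_P}=X^{(\mathbf I-R^{tr})\Dim I}$ using the torus relation, and observe that the $\mathbf e$-dependent part of the exponent $\Lambda(-B\mathbf e,{\bf t}_P)$ vanishes because rigidity of $V\oplus I[-1]$ kills $\Hom(V,I)$ and hence the support of any subrepresentation of $V$ misses the support of $\soc I$. The identification $(\mathbf I-R^{tr})\Dim I=\Dim\soc I={\bf t}_P$ and the use of the compatibility relation $\tilde B^{tr}\Lambda=D$ are both handled correctly. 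One cosmetic remark: your line ``$\tilde B^{tr}\Lambda=2\mathbf I$ yields $\Lambda(B\mathbf e,{\bf t}_P)=2(\mathbf e\cdot{\bf t}_P)$'' is written for the particular case $D=2\mathbf I$ of this paper, whereas the theorem as stated (and as proved in \cite{DShCh,rupel-2}) holds for general $D$; the same argument goes through verbatim with $\Lambda(B\mathbf e,{\bf t}_P)=\mathbf e^{tr}D\,{\bf t}_P$, which still vanishes once $\mathbf e\cdot{\bf t}_P=0$ since $D$ is diagonal.
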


\section{Quantum linear relations in  quantum cluster algebra $\mathcal{A}_q(Q)$}
In this section, we study the quantum cluster algebra of type $\widetilde{A}_{2n-1,1}$ for $n\geq 2$.
Due to the same valuations in ${\bf d}=(2,2, \dots,2)$,  every representation of $\operatorname{Rep}_{\mathbb{F}} (Q, \bf d)$ is just the representation of the (unvalued) quiver $\widetilde{A}_{2n-1,1}$ over the finite field $\mathbb{F}_2$.
Thus the regular components of the Auslander-Reiten quiver of representations of $(Q, {\bf d})$
consist of one non-homogeneous tube of rank $2n-1$ and a family of homogeneous tubes indexed by projective line over $\mathbb{F}_2$. The mouth of homogeneous tube has the form:
\begin{equation*}
E(\lambda):=
\begin{tiny}
\begin{tikzcd}  %[column sep=tiny]
& \mathbb{F}_2 \ar[r, "1"]  &  \mathbb{F}_2 \ar[r, "1"]  & \cdots    \ar[r, "1"]    & \mathbb{F}_2    \ar[r, "1"] & \mathbb{F}_2  \ar[dr, "1"]  & \\
  \mathbb{F}_2 \arrow[ur, "1"]   \arrow[rrrrrr, "\lambda"]  & & &   &   &  & \mathbb{F}_2
\end{tikzcd}
\end{tiny}
\end{equation*} 
with $\lambda \in \mathbb{F}_2$. We denote by $\delta:={\Dim} E(\lambda)$.
The mouth of the unique non-homogeneous tube has the following shape:
\begin{center}
\begin{tiny}
\begin{tikzcd}  %[column sep=tiny]
                             & \star  \arrow[rd]  &                                      &     \star   \arrow[rd] &                      &              &                               &  \star  \arrow[rd]       &                         &   \star \arrow[rd] &\\
  E_1  \arrow[ur]   &                            & E_{2n-1}   \arrow[ur]     &                               &  E_{2n-2}      & \cdots   &   E_3  \arrow[ur]      &                                &  E_2  \arrow[ur]   &            & E_1 .
   \end{tikzcd}
\end{tiny}
\end{center}
where 
\begin{equation*}
E_1=
\begin{tiny}
\begin{tikzcd}  %[column sep=tiny]
& 0 \ar[r]  &  0 \ar[r]  & \cdots    \ar[r]    & 0    \ar[r] & 0 \ar[dr]  & \\
  \mathbb{F}_2 \arrow[ur]   \arrow[rrrrrr, "1"]  & & &   &   &  & \mathbb{F}_2
\end{tikzcd}
\end{tiny},
\end{equation*}
$E_i=S_i$ for $2\leq i \leq 2n-1$, $\tau (E_i)=E_{i+1}$ for $1\leq i \leq 2n-2$, and $\tau (E_{2n-1})=E_1$.  Given a regular simple module $S$ in a tube, we denote by $R_{S,l}$  the indecomposable regular module with quasi-socle $S$ and quasi-length $l$ for any $l\geq 0.$ 

The Auslander-Reiten component of preprojective representations in $\operatorname{Rep}_{\mathbb{F}} (Q, \bf d)$ has the following shape:

\begin{center}
\begin{tikzcd}[cramped, sep=small]
  P_{2n} \arrow[r,  "\tau^{-1}", dotted] \arrow[d]   \arrow[ddddddd, bend right]  &  \tau^{-1} (P_{2n})  \arrow[d]   \arrow[r, "\tau^{-1}", dotted]  \arrow[ddddddd, bend right]           &   \tau^{-2} (P_{2n}) \arrow[d]  \arrow[ddddddd, bend right]   \arrow[r, "\tau^{-1}", dotted]   & \cdots    \arrow[ddddddd, bend right] \\
  P_1  \arrow[r, dotted]    \arrow[dr]   \arrow[ur]                          &    \tau^{-1} (P_{1})  \arrow[r, dotted]    \arrow[ur]   \arrow[dr] &\tau^{-2} (P_{1}) \arrow[r, dotted]   \arrow[ur]  \arrow[dr] & \cdots                               \\
  P_2     \arrow[u] \arrow[r, dotted]  \arrow[dr]  &  \tau^{-1} (P_{2})  \arrow[r, dotted] \arrow[u]  \arrow[dr]  & \tau^{-2} (P_{2})  \arrow[u]           \arrow[r, dotted]    \arrow[dr]  & \cdots       \\
  P_3   \arrow[u] \arrow[r, dotted] \arrow[dr] &  \tau^{-1} (P_{3}) \arrow[r, dotted] \arrow[u] \arrow[dr]  &  \tau^{-2} (P_{3})  \arrow[u]       \arrow[r, dotted]   \arrow[dr] & \cdots          \\
  P_4   \arrow[u] \arrow[r, dotted] \arrow[dr] &  \tau^{-1} (P_{4})  \arrow[r, dotted] \arrow[u]\arrow[dr] &  \tau^{-2} (P_{4}) \arrow[u]           \arrow[r, dotted]   \arrow[dr] & \cdots       \\
\,\,\,\,\,\,\, \vdots \,\,\,\,\,\,\, \arrow[u] \arrow[r, dotted] \arrow[dr] &\,\,\,\, \,\,\,\vdots \,\, \,\,\,\,\, \arrow[r, dotted] \arrow[u] \arrow[dr] &\,\,\,\,\,\,\, \vdots \,\,\,\,\,\,\, \arrow[u]      \arrow[r, dotted]    \arrow[dr]          & \cdots                         \\  
 P_{2n-2}  \arrow[u] \arrow[r, dotted] \arrow[dr] &  \tau^{-1} (P_{2n-2})  \arrow[r, dotted] \arrow[u] \arrow[dr] &   \tau^{-2} (P_{2n-2}) \arrow[u]   \arrow[r, dotted]  \arrow[dr]  & \cdots       \\
 P_{2n-1}   \arrow[u] \arrow[r, dotted]  \arrow[uuuuuuur, bend left] &  \tau^{-1} (P_{2n-1}) \arrow[r, dotted] \arrow[u]  \arrow[uuuuuuur, bend left] &  \tau^{-2} (P_{2n-2}) \arrow[u]   \arrow[r, dotted]   \arrow[uuuuuuur, bend left] & \cdots       \\
    \end{tikzcd}
\end{center}
and the Auslander-Reiten component of preinjective representations has the following shape:

\begin{center}
\begin{tikzcd}[cramped, sep=small]
  & \cdots  \arrow[ddddddd, bend left]  &  \tau^2 (I_{2n})  \arrow[d]   \arrow[l, "\tau", dotted]  \arrow[ddddddd, bend left]           &  \tau (I_{2n}) \arrow[d]  \arrow[ddddddd, bend left]   \arrow[l,  "\tau", dotted]   & I_{2n}  \arrow[l,  "\tau", dotted] \arrow[d]  \arrow[ddddddd, bend left] \\
 &  \cdots      \arrow[dr]   \arrow[ur]                          &   \tau^2 (I_{1})       \arrow[l, dotted]    \arrow[ur]   \arrow[dr] &   \tau (I_{1})     \arrow[l, dotted]   \arrow[ur]  \arrow[dr] & I_{1}    \arrow[l, dotted]                           \\
 & \cdots     \arrow[dr]  &  \tau^2 (I_{2})  \arrow[l, dotted] \arrow[u]  \arrow[dr]  &    \tau (I_{2})       \arrow[u]           \arrow[l, dotted]    \arrow[dr]  & I_{2}  \arrow[l, dotted]   \arrow[u]    \\
 & \cdots    \arrow[dr] &  \tau^2 (I_{3})  \arrow[l, dotted] \arrow[u] \arrow[dr]  &  \tau (I_{3})  \arrow[u]       \arrow[l, dotted]   \arrow[dr] & I_{3}  \arrow[l, dotted]   \arrow[u]    \\
  & \cdots  \arrow[dr] &  \tau^2 (I_{4})  \arrow[l, dotted] \arrow[u]\arrow[dr] &  \tau (I_{4})  \arrow[u]           \arrow[l, dotted]   \arrow[dr] & I_{4} \arrow[l, dotted] \arrow[u] \\
&  \cdots  \arrow[dr] &\,\,\,\, \,\,\,\vdots \,\, \,\,\,\,\, \arrow[l, dotted] \arrow[u] \arrow[dr] &\,\,\,\,\,\,\, \vdots \,\,\,\,\,\,\, \arrow[u]      \arrow[l, dotted]   \arrow[dr]  &  \,\,\,\,\,\,\, \vdots \,\,\,\,\,\,\, \arrow[l, dotted] \arrow[u]\\  
 & \cdots   \arrow[dr] & \tau^2 (I_{2n-2})   \arrow[l, dotted] \arrow[u] \arrow[dr] &  \tau (I_{2n-2})  \arrow[u]   \arrow[l, dotted]  \arrow[dr]  & I_{2n-2} \arrow[l, dotted] \arrow[u]\\
 & \cdots    \arrow[uuuuuuur, bend right] &  \tau^2 (I_{2n-1}) \arrow[l, dotted] \arrow[u]  \arrow[uuuuuuur, bend right] &  \tau (I_{2n-1})  \arrow[u]   \arrow[l, dotted]   \arrow[uuuuuuur, bend right] & I_{2n-1} \arrow[l, dotted] \arrow[u]\\
    \end{tikzcd}
\end{center}
%Here we use the notation $I(i,j)$, $P(i,j)$ for indecomposable preinjective and preprojective representation, respectively. 
For $1 \leq i \leq 2n-1$ and $l \geq 0$,  let $M_i(l)$ be the indecomposable preprojective representation such that $\Dim M_i(l)=\Dim P_{i+1}+l\delta$.
For $2 \leq i \leq 2n$ and $l \geq 0$, let $N_i(l)$ be the indecomposable preinjective representation such that $\Dim N_i(l)=\Dim I_{i-1}+l\delta$.  

\begin{proposition} \label{delta-1}
For any $\lambda \in \mathbb{F}_2$, we have $\overline{X_{E(\lambda)}}=X_{E(\lambda)}.$ 
\end{proposition}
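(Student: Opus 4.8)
The plan is to write out $X_{E(\lambda)}$ from the quantum cluster character formula and to verify that every coefficient that appears is $1$, so that $X_{E(\lambda)}$ is a finite sum of Laurent monomials and hence bar-invariant. Since $E(\lambda)$ has no projective direct summand, ${\bf t}_P=0$ and the formula reads
$$X_{E(\lambda)}=\sum_{\bf e}\big|\mathrm{Gr}_{\bf e}E(\lambda)\big|\,q^{-\half\langle{\bf e},\delta-{\bf e}\rangle}\,X^{-B{\bf e}-(\mathbf{I}-R^{tr})\delta},$$
so what I need is control over the subrepresentations of $E(\lambda)$ together with the scalars $q^{-\half\langle{\bf e},\delta-{\bf e}\rangle}$.

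First I would enumerate the subrepresentations. Since $\delta=(1,1,\dots,1)$, a subrepresentation $U\subseteq E(\lambda)$ has $U_i\in\{0,\mathbb{F}_2\}$ for every vertex $i$, hence is determined by the set $T=\{i:U_i\neq 0\}$; and $U$ is a subrepresentation exactly when $T$ is closed under every arrow along which $E(\lambda)$ acts by an isomorphism. Because the path $1\to 2\to\cdots\to 2n$ already forces all of $2,\dots,2n$ into $T$ once $1\in T$, these sets are precisely $\emptyset$ and the intervals $\{k,k+1,\dots,2n\}$ for $1\le k\le 2n$, irrespective of $\lambda$. Consequently each $\mathrm{Gr}_{\bf e}E(\lambda)$ is empty or a single point, and $X_{E(\lambda)}$ is a sum of $2n+1$ terms, indexed by the subrepresentations $U_T$ with $\Dim U_T$ equal to $0$, to $\delta$, or to $\sum_{i=j}^{2n}{\bf e}_i$ for $2\le j\le 2n$.

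Next I would show that $\langle\Dim U_T,\delta-\Dim U_T\rangle=0$ for each of these $T$, so that every scalar $q^{-\half\langle{\bf e},\delta-{\bf e}\rangle}$ equals $1$. The cases $\Dim U_T=0$ and $\Dim U_T=\delta$ are trivial. For $\Dim U_T=\sum_{i=j}^{2n}{\bf e}_i$ with $2\le j\le 2n$ we have $\delta-\Dim U_T={\bf e}_1+\cdots+{\bf e}_{j-1}$; conceptually the pairing vanishes because $U_T$ and $E(\lambda)/U_T$ have disjoint supports, so $\Hom(U_T,E(\lambda)/U_T)=0$, and a short computation with the Euler-form matrix $2(\mathbf{I}-R^{tr})$ — using that the column of $R^{tr}$ at a vertex records the arrows out of that vertex, and telescoping — gives $(\mathbf{I}-R^{tr})(\delta-\Dim U_T)={\bf e}_{j-1}$, which then pairs with $\Dim U_T$ to $0$ since the $(j-1)$-st coordinate of $\Dim U_T$ vanishes.

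With all coefficients equal to $1$, we obtain $X_{E(\lambda)}=\sum_T X^{-B\,\Dim U_T-(\mathbf{I}-R^{tr})\delta}$, a finite sum of Laurent monomials $X^{\bf f}$ with ${\bf f}\in\mathbb{Z}^{2n}$. Since $\overline{X^{\bf f}}=X^{\bf f}$ for every ${\bf f}\in\mathbb{Z}^{2n}$, the element $X_{E(\lambda)}$ is bar-invariant, which is the assertion. The only input that is not purely formal is the identification of the $2n+1$ subrepresentations and the vanishing of the Euler pairings; I expect the latter — the small telescoping computation with $\mathbf{I}-R^{tr}$ — to be the one spot requiring a little care, after which bar-invariance is immediate.
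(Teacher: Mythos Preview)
Your proof is correct and follows the same line as the paper's: identify the proper subrepresentations of $E(\lambda)$ as the intervals $\{k,\dots,2n\}$, check that $\langle{\bf e},\delta-{\bf e}\rangle=0$ for each of their dimension vectors, and conclude that $X_{E(\lambda)}$ is a sum of bare monomials $X^{\bf f}$, hence bar-invariant. The paper compresses the Euler-form step into the single line $2{\bf e}^{tr}(\mathbf{I}-R^{tr})(\delta-{\bf e})=0$, whereas you spell out the telescoping $(\mathbf{I}-R^{tr})({\bf e}_1+\cdots+{\bf e}_{j-1})={\bf e}_{j-1}$; this is the same computation.
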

\begin{proof}
Note that the subrepresentation of $E(\lambda)$ has the form 
\begin{center}
\begin{tiny}
\begin{tikzcd}  %[column sep=tiny]
& 0 \ar[r]  &  0 \ar[r]  & \cdots    \ar[r]    & \mathbb{F}_2    \ar[r, "1"] & \mathbb{F}_2 \ar[dr, "1"]  & \\
  0 \arrow[ur]   \arrow[rrrrrr]  & & &   &   &  & \mathbb{F}_2
\end{tikzcd}
\end{tiny}.
\end{center}
Let ${\bf e}$ be the dimension vector of the above subrepresentation. Then we have that $$\langle {\bf{e}}, {\Dim} E(\lambda)-{\bf{e}} \rangle =2{\bf{e}}^{tr}(\mathbf{I}-R^{tr})({\Dim} E(\lambda)-{\bf{e}} )=0, $$ which implies
that 
$X_{E(\lambda)}=\displaystyle\sum_{\bf{e}}  X^{-{B}{\bf e}-(\mathbf{I}-{R}^{tr})  {\Dim} E(\lambda)}$. Therefore $\overline{X_{E(\lambda)}}=X_{E(\lambda)}.$ 
\end{proof}

Note that, for any $\lambda \in \mathbb{F}_2$, $X_{E(\lambda)}=\displaystyle\sum_{\bf{e}}  X^{-{B}{\bf e}-(\mathbf{I}-{R}^{tr})  \delta}$. Therefore we can define a generic element $X_{\delta}:=X_{E(\lambda)}$ which is independent of the choice of $\lambda$.

\begin{proposition}\label{delta-2}
With the above notations, we have that
$$X_{I_1} X_{P_2}=\displaystyle q^{\frac{1}{2}} X_{\delta}+ q^{-1} X_{I_{2n}[-1]}X_{P_3}.$$
\end{proposition}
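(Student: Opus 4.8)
The natural strategy is to apply the multiplication formula of Theorem~\ref{m-2} with $W=P_2$, $I=I_1$, and $P=\nu^{-1}(I_1)$. First I would identify $P=\nu^{-1}(I_1)$: since $I_1$ is the injective at vertex $1$ and $\soc I_1\cong S_1$, we have $P/\rad P\cong S_1$, so $P=P_1$. Thus the formula will involve $X_{P_2}X_{I_1[-1]}$, and I need to relate this to the product $X_{I_1}X_{P_2}$ in the statement; since $X_{I_1}$ and $X_{I_1[-1]}$ differ (the latter is the initial cluster variable $X^{\Dim\soc I_1}$ while the former is the genuine preinjective character), I should instead read the statement as the $n\geq 2$ analogue to be verified directly, and double-check indices against the quiver picture — the arrow $1\to 2n$ and the chain $1\to 2\to\cdots\to 2n-1\to 2n$ determine $P_2$, $I_1$, $I_{2n}$ concretely.

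Next I would compute the four exact sequences feeding Theorem~\ref{m-2}. With $W=P_2$ and $I=I_1$: the space $\Hom(P_2,I_1)$ should be one-dimensional (a single path from $2$ to $1$ is absent, but there is the composite through the arrow $1\to 2$ dualized — here I must be careful and instead count $\dim_{\mathbb F}\Hom(P_2,I_1)=[\Dim P_2 : \text{at vertex }2]\cdot\ldots$; concretely $\Hom(P_2,I_1)\cong (I_1)_2$), and likewise $\Hom(P_1,P_2)$ is one-dimensional coming from the arrow $1\to 2$. Then $G=\ker(\theta\colon P_2\to I_1)$, $I'=\cok\theta$ injective, $P'=\ker(\gamma\colon P_1\to P_2)$ projective, $F=\cok\gamma$. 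I expect the bookkeeping to yield $G\oplus I'[-1]$ with $X_{G\oplus I'[-1]}=X_{I_{2n}[-1]}X_{P_3}$ (up to the commutation twist of Theorem~\ref{m-3} or~\ref{th:init-comm}), and $F\oplus P'[1]$ to be exactly $E(\lambda)$ for the relevant $\lambda$ — matching $\Dim E(\lambda)=\delta$ against $\Dim F - \Dim P'$. One must also verify the vanishing hypotheses $\Hom(P',F)=\Hom(G,I')=0$, which should follow since $P'$ is projective and $F$ has no top supported where $P'$ does, and dually for $G,I'$.

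The remaining work is to pin down the two powers of $q$. The prefactor $q^{-\frac12\Lambda((\mathbf I-R^{tr})\Dim P_2,(\mathbf I-R^{tr})\Dim I_1)}$ and the correction $-\frac12\dim_{\mathbb F}\End(I_1)$ must be evaluated using the explicit matrices $\Lambda$, $R$, and $B$ from Section~2, together with $\mathbf I - R^{tr}$; since $\dim_{\mathbb F}\End(I_1)$ counts $\End$ over $\mathbb F$ and each $\mathbb F_2$-space contributes a factor of $2$, I would track this carefully. Then I would further apply Theorem~\ref{m-3} (resp.\ Theorem~\ref{th:init-comm}) to rewrite $X_{G\oplus I'[-1]}$ as $X_{I_{2n}[-1]}X_{P_3}$ in the stated order, absorbing any commutation twist $q^{\Lambda(\cdots)}$, and check that everything collapses to the exponents $\tfrac12$ and $-1$ claimed. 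As a sanity check I would specialize $q\to 1$ and confirm the classical identity $X_{I_1}X_{P_2}=X_\delta + X_{I_{2n}[-1]}X_{P_3}$, which is the $\ell=0$ instance of the frieze relation from \cite{KS,DTh}.

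\textbf{Main obstacle.} The hard part will be the exponent arithmetic: correctly evaluating the $\Lambda$-pairings on the shifted dimension vectors $(\mathbf I-R^{tr})\Dim(-)$, keeping the $\mathbb F$ versus $\mathbb F_2$ dimension factors straight in $\dim_{\mathbb F}\End(I_1)$, and tracking the commutation twist when reordering the product $X_{I_{2n}[-1]}X_{P_3}$ — any sign or factor-of-two slip there will spoil the clean exponents $q^{1/2}$ and $q^{-1}$. Verifying the four short exact sequences and the $\Hom$/$\Ext$-vanishing conditions is more routine given the explicit tube and preprojective/preinjective pictures already drawn in the paper.
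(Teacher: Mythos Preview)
Your approach via Theorem~\ref{m-2} cannot work, for two independent reasons. First, Theorem~\ref{m-2} computes $X_{W}X_{I[-1]}$, not $X_{I}X_{W}$; you noticed this but never resolved it, and your subsequent computation of $G,I',P',F$ proceeds as if the issue had vanished. Second --- and more fatally --- the hypothesis of Theorem~\ref{m-2} fails outright: since vertex $1$ is a source, $I_1=S_1$, while $P_2$ is supported only on vertices $2,\ldots,2n$; hence $\Hom(P_2,I_1)=0$, not one-dimensional. So there is no nonzero $\theta\colon P_2\to I_1$ to feed into the formula.

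Switching to Theorem~\ref{m-1} with $V=I_1=S_1$ and $W=P_2$ does not rescue the argument either. A computation with the Euler form gives $\langle S_1,P_2\rangle=-4$ over $\mathbb F$, hence $\dim_{\End(S_1)}\Ext^1(S_1,P_2)=2$, not $1$. This is not an accident: $\Dim S_1+\Dim P_2=\delta$, and the non-split extensions are precisely the regular modules of dimension vector $\delta$, a $\mathbb P^1$-family containing all the $E(\lambda)$ together with the length-$(2n-1)$ module in the non-homogeneous tube. The whole point of defining $X_\delta$ as a \emph{generic} element is that no single extension picks it out; the multiplication formulas \ref{m-1}--\ref{m-2} are built for a unique extension and simply do not apply here.

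The paper's proof is completely different and much more elementary: it writes out $X_{I_1}$, $X_{P_2}$, $X_{P_3}$ explicitly as short sums of quantum torus monomials via the cluster character formula (each Grassmannian is a chain, so the sums are small), uses the explicit description of $X_\delta$ from Proposition~\ref{delta-1}, and then verifies the identity by direct monomial-by-monomial comparison in $\mathcal T_q$. That is the route you should take.
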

\begin{proof}
Note that
$$X_{I_1}=X^{-{B}{\Dim} I_1-(\mathbf{I}-{R}^{tr})  {\Dim} I_1}+X^{-(\mathbf{I}-{R}^{tr})  {\Dim} I_1},$$
$$X_{P_2}=X^{-({I}-{R}^{tr})  {\Dim} P_2}+\sum^{2n}_{i=2}  X^{-B {\Dim}  P_i-(\mathbf{I}-{R}^{tr}) {\Dim} P_2},$$
and
$$X_{P_3}=X^{-(\mathbf{I}-{R}^{tr})  {\Dim} P_3}+\sum^{2n}_{i=3}  X^{-B {\Dim}  P_i-(\mathbf{I}-{R}^{tr}) {\Dim} P_3}.$$
Thus the statement can be proved by  the generic property of $X_\delta$ and direct computation.
\end{proof}
Immediately, we obtain the following result.
\begin{corollary}\label{delta-3}
 The generic element $X_{\delta}$ belongs to  $\mathcal{A}_q(Q)$. 
\end{corollary}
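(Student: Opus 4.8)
The plan is to deduce Corollary~\ref{delta-3} directly from Proposition~\ref{delta-2}, together with the fact that all the other quantities appearing in that identity already lie in $\mathcal{A}_q(Q)$. First I would recall that $X_{I_1}$ is a quantum cluster variable: indeed $I_1$ is an indecomposable preinjective (hence rigid) representation, so $X_{I_1}$ is a cluster monomial by Theorem~\ref{bijection}, and in particular $X_{I_1}\in\mathcal{A}_q(Q)$. Likewise $X_{P_2}$ and $X_{P_3}$ are cluster monomials, being the images of the indecomposable rigid objects $P_2$ and $P_3$; and $X_{I_{2n}[-1]}=X_{2n}$ (or more precisely $X_{I_{2n}[-1]}=X_{\tau^{-1}I_{2n}}$, an image of a rigid object in $\mathcal{C}_Q$), which is again a cluster monomial. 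Actually the cleanest route is to note that $I_1$, $P_2$, $P_3$ and $I_{2n}[-1]$ are all rigid objects of $\mathcal{C}_Q$, so every $X$-symbol occurring in Proposition~\ref{delta-2} lies in $\mathcal{A}_q(Q)$ by Theorem~\ref{bijection}.

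Next I would solve the identity of Proposition~\ref{delta-2} for $X_\delta$. Since $q^{\frac12}$ is an invertible element of the coefficient ring $\mathbb{Z}[q^{\pm\frac12}]\subseteq\mathcal{A}_q(Q)$, we may rewrite
\begin{equation*}
X_{\delta}=q^{-\frac12}\bigl(X_{I_1}X_{P_2}-q^{-1}X_{I_{2n}[-1]}X_{P_3}\bigr).
\end{equation*}
The right-hand side is an $\mathbb{Z}[q^{\pm\frac12}]$-linear combination of products of elements of $\mathcal{A}_q(Q)$, hence belongs to $\mathcal{A}_q(Q)$ because $\mathcal{A}_q(Q)$ is a $\mathbb{Z}[q^{\pm\frac12}]$-subalgebra (indeed a $\mathbb{Z}\mathbb{P}$-subalgebra) of $\mathcal{F}_{\mathfrak{q}}$ and is therefore closed under multiplication and under scaling by $q^{\pm\frac12}$. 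This gives $X_\delta\in\mathcal{A}_q(Q)$ and finishes the argument.

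There is essentially no obstacle here: the only point that needs a word of care is making sure the three auxiliary elements $X_{I_1}$, $X_{P_2}$, $X_{P_3}$, $X_{I_{2n}[-1]}$ are genuinely in the algebra, which is immediate from Theorem~\ref{bijection} since the corresponding objects of $\mathcal{C}_Q$ are rigid (indecomposable preprojective, preinjective, or shifts of projectives). One should also note that $X_\delta$ is well defined independently of $\lambda\in\mathbb{F}_2$, which was already established in the remark following Proposition~\ref{delta-1}, so the conclusion "$X_\delta\in\mathcal{A}_q(Q)$" is unambiguous. If one wanted to avoid invoking Theorem~\ref{bijection} at all, an alternative is to observe that $X_{P_2},X_{P_3}$ and $X_{I_1}=X_{\tau^{-1}\nu P_1}$ can be produced from the initial seed by a bounded sequence of the multiplication formulas of Theorems~\ref{m-1}–\ref{th:init-comm}, but invoking the bijection of Theorem~\ref{bijection} is shorter and entirely sufficient.
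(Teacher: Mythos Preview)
Your argument is correct and is exactly the approach the paper intends: the corollary is stated as an immediate consequence of Proposition~\ref{delta-2}, and you have spelled out precisely why, namely that every other term in that identity is a cluster monomial (via Theorem~\ref{bijection}) and $q^{1/2}$ is invertible in the coefficient ring. The only trivial slip is that you speak of ``three auxiliary elements'' while listing four; otherwise nothing needs to be changed.
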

\begin{lemma}\label{lamda}
With the above notations, we have that
\begin{enumerate}
\item $\Lambda ((\mathbf{I}-R^{tr})\Dim M_1(l), (\mathbf{I}-R^{tr})\delta)=-1$;
\item $\Lambda ((\mathbf{I}-R^{tr})\Dim E, (\mathbf{I}-R^{tr})\delta)=0$ for  any regular rigid object  $E$ in $\mathcal{C}_{Q}$.
\end{enumerate}

\end{lemma}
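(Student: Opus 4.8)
The plan is to reduce both statements to explicit computations with the skew-symmetric form $\Lambda$ after first understanding the dimension vectors involved. The key observation is that both quantities are of the shape $\Lambda((\mathbf{I}-R^{tr})\mathbf{u}, (\mathbf{I}-R^{tr})\delta)$, and since the Euler form matrix is $2(\mathbf{I}-R^{tr})$ while $\tilde{B}^{tr}\Lambda = 2\mathbf{I}$ and $\tilde{B} = R^{tr}-R$, one can try to rewrite $(\mathbf{I}-R^{tr})^{tr}\Lambda(\mathbf{I}-R^{tr})$ in a manageable closed form. Concretely, I would first establish the matrix identity
\begin{equation*}
(\mathbf{I}-R)\Lambda(\mathbf{I}-R^{tr}) = \text{(something computable)},
\end{equation*}
by expanding and using $R^{tr}-R = \tilde{B} = R^{tr}\Lambda$-related relations; in fact from $\tilde{B}^{tr}\Lambda = 2\mathbf{I}$ we get $\Lambda = \tfrac12(\tilde{B}^{tr})^{-1}\cdot 2 = (\tilde{B}^{tr})^{-1}$ is not integral, so instead I would keep $\Lambda$ as given and just note $R^{tr}\Lambda - R\Lambda = \tilde{B}\Lambda$ and $\tilde{B}^{tr}\Lambda=2\mathbf{I}$ gives $\Lambda^{tr}\tilde{B} = 2\mathbf{I}$, i.e. $\Lambda\tilde{B} = -2\mathbf{I}$. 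This last identity is the workhorse: $\Lambda(\mathbf{I}-R^{tr}) = \Lambda - \Lambda R^{tr}$ and $\Lambda\tilde{B} = \Lambda(R^{tr}-R) = -2\mathbf{I}$, so $\Lambda R^{tr} = \Lambda R - 2\mathbf{I}$, hence $\Lambda(\mathbf{I}-R^{tr}) = \Lambda - \Lambda R + 2\mathbf{I} = (\mathbf{I}-R)\Lambda^{tr}\cdots$ — I would chase this to get $(\mathbf{I}-R^{tr})^{tr}\Lambda(\mathbf{I}-R^{tr})$ expressed via $\mathbf{I}-R$ alone, making the pairing depend only on dimension vectors in a transparent way.

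For part (2), the cleanest route is representation-theoretic rather than brute matrix computation. A regular rigid object $E$ in $\mathcal{C}_Q$ lies in the non-homogeneous tube of rank $2n-1$ and has quasi-length strictly less than $2n-1$; its dimension vector is a sum of consecutive $\Dim E_i$'s. Since $\langle \cdot,\cdot\rangle$ has matrix $2(\mathbf{I}-R^{tr})$, and $\delta$ is the minimal imaginary root with $\langle \delta, -\rangle = 0 = \langle -,\delta\rangle$ on the whole root lattice (as $\delta$ is the null root of the affine Euler form), the pairing $\Lambda((\mathbf{I}-R^{tr})\Dim E,(\mathbf{I}-R^{tr})\delta)$ should vanish once we know $\Lambda(\mathbf{I}-R^{tr})\delta$ is a specific vector annihilated by $(\mathbf{I}-R^{tr})^{tr}\Dim E$. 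Here I would use that $(\mathbf{I}-R^{tr})\delta$ is determined by $\tilde{B}\delta = 0$ (since $\delta$ is in the kernel of $B$, being the null root), so $R^{tr}\delta = R\delta$, and combined with the closed form for $X_\delta$ already computed in Proposition~\ref{delta-1} (where the exponent is $-B\mathbf{e} - (\mathbf{I}-R^{tr})\delta$ with no $q$-twist), the form $\Lambda(\mathbf{I}-R^{tr})\delta$ reduces to $\tfrac12\Lambda\tilde{B}\cdot(\text{stuff}) = -(\text{stuff})$-type expression that pairs to zero against $\Dim E$ by the orthogonality of the finite-type root subsystem spanned by the $E_i$'s to the appropriate vector. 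Concretely, I expect $\Lambda(\mathbf{I}-R^{tr})\delta = -\delta$ or $\pm\delta$ up to lower-rank corrections, and then $\Dim E \cdot \delta$-type inner products vanish because $\Dim E$ is orthogonal to $\delta$ under the symmetrized Euler form for any regular $E$.

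For part (1), the object $M_1(l)$ is preprojective with $\Dim M_1(l) = \Dim P_2 + l\delta$. By part (2)-type reasoning the $l\delta$ contribution pairs to $l\cdot\Lambda((\mathbf{I}-R^{tr})\delta,(\mathbf{I}-R^{tr})\delta) = 0$ (the form is skew-symmetric, so any vector pairs with itself to zero), so $\Lambda((\mathbf{I}-R^{tr})\Dim M_1(l),(\mathbf{I}-R^{tr})\delta) = \Lambda((\mathbf{I}-R^{tr})\Dim P_2,(\mathbf{I}-R^{tr})\delta)$, independent of $l$. It then suffices to compute this single number for $P_2$, which is an entirely explicit finite calculation with the given matrices $\Lambda$, $R$, and $\delta = \Dim E(\lambda) = (1,1,\dots,1,2)$ (or whatever the explicit vector is from the picture — a quick inspection of $E(\lambda)$ gives $\delta$ with a $2$ at vertex $2n$ and $1$ elsewhere, roughly). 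I would carry out this one dot-product and verify it equals $-1$.

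The main obstacle will be part (2): showing the vanishing uniformly over \emph{all} regular rigid objects rather than case-by-case. The danger is that one is tempted to enumerate the finitely many dimension vectors of rigid modules in the rank-$2n-1$ tube (those of quasi-length $\le 2n-2$, of which there are $O(n^2)$ families), which works but is inelegant and $n$-dependent. The better argument — which I would push for — is to prove the single matrix identity $(\mathbf{I}-R^{tr})^{tr}\Lambda(\mathbf{I}-R^{tr})\delta = c\cdot\mathbf{w}$ for an explicit vector $\mathbf{w}$ (likely $\mathbf{w}=\delta$ or a coordinate vector supported near vertices $1,2n$) and then observe that $\Dim E$ has zero coordinate there / is orthogonal to $\mathbf{w}$ for every regular rigid $E$, using the structure of the tube (regular rigids avoid the "defect" directions). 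Getting that clean vector $\mathbf{w}$ and the orthogonality statement is where the real work lies; everything else is bookkeeping with the $\Lambda\tilde{B} = -2\mathbf{I}$ identity.
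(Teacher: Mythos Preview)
Your approach to part (1) is correct and coincides with the paper's: reduce to $\Dim P_2$ via $\Dim M_1(l)=\Dim P_2 + l\delta$ and skew-symmetry, then compute one explicit pairing. (One slip: from the diagram of $E(\lambda)$ every vertex carries $\mathbb{F}_2$, so $\delta=(1,1,\dots,1)$, not $(1,\dots,1,2)$.)

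For part (2), however, you overcomplicate matters and also rely on a false claim. You assert $\tilde{B}\delta=0$ (``$\delta$ is in the kernel of $B$''), but a direct check with the given $\tilde{B}$ gives $\tilde{B}\delta = 2e_1 - 2e_{2n}\neq 0$; what vanishes is the \emph{symmetrized} Euler form against $\delta$, not the antisymmetrization $B=R^{tr}-R$. So the chain of deductions built on $\tilde{B}\delta=0$ does not go through. More importantly, the ``uniformity'' difficulty you worry about is illusory: you do not need to enumerate the $O(n^2)$ rigid regulars $R_{E_i,r}$, because $\Dim E$ for any regular rigid $E$ is a $\mathbb{Z}_{\ge 0}$-combination of the $2n-1$ quasi-simple dimension vectors $\Dim E_1,\dots,\Dim E_{2n-1}$. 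By bilinearity it suffices to check the pairing for each $E_i$.

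This is exactly what the paper does, and the computation is tiny once one observes $(\mathbf{I}-R^{tr})\delta=-e_1+e_{2n}$, $(\mathbf{I}-R^{tr})\Dim E_1=-e_{2n-1}+e_{2n}$, and $(\mathbf{I}-R^{tr})\Dim E_i=-e_{i-1}+e_i$ for $2\le i\le 2n-1$. Each of the $2n-1$ pairings $\Lambda(-e_{j-1}+e_j,\,-e_1+e_{2n})$ is then read off directly from the explicit $\Lambda$ and equals $0$. No abstract matrix identity for $(\mathbf{I}-R)\Lambda(\mathbf{I}-R^{tr})$ is needed; the whole lemma is four lines of arithmetic once the right reduction is made.
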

\begin{proof}
(1) The equation follows from $$\Lambda ((\mathbf{I}-R^{tr})\Dim M_1(l), (\mathbf{I}-R^{tr})\delta)= \Lambda ((\mathbf{I}-R^{tr})\Dim P_2, (\mathbf{I}-R^{tr})\delta)=-1.$$
(2) For any regular rigid object $E$ in $\mathcal{C}_{Q}$, there exist some $n_i\in \mathbb{Z}_{\geq 0}$, $1\leq i\leq 2n-1$ such that $\Dim E=\displaystyle \sum^{2n-1}_{i=1}n_i\Dim E_i$.  Note that for $i=1$, we have
$$\Lambda ((\mathbf{I}-R^{tr})\Dim E_1, (\mathbf{I}-R^{tr})\delta)=\Lambda(-e_{2n-1}+e_{2n},-e_{1}+e_{2n})=0$$
and for $2\leq i\leq 2n-1,$ we have
$$\Lambda ((\mathbf{I}-R^{tr})\Dim E_i, (\mathbf{I}-R^{tr})\delta)=\Lambda(-e_{i-1}+e_{i},-e_{1}+e_{2n})=0.$$
Thus the second equation follows from  summing up the above equations.

\end{proof}

\begin{proposition}\label{bar-inv}
For any regular rigid object $E$ in $\mathcal{C}_{Q}$ and any $i\in \mathbb{Z}_{\geq 0}$, the element $X^{i}_{\delta}X_{E}$ is bar-invariant.
\end{proposition}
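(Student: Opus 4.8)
The plan is to reduce bar-invariance of $X_\delta^i X_E$ to two facts: first, that each factor is individually bar-invariant, and second, that the product $X_\delta^i X_E$ differs from the ``naive'' product (the one whose bar-involution behaves symmetrically) by a power of $q^{1/2}$ governed by the form $\Lambda$, and that this exponent vanishes. Concretely, since $X_\delta$ lies in $\mathcal{A}_q(Q)$ by Corollary~\ref{delta-3} and is bar-invariant by Proposition~\ref{delta-1}, and since $X_E$ is a quantum cluster monomial (as $E$ is regular rigid, hence rigid in $\mathcal{C}_Q$, so Theorem~\ref{bijection} applies) and is therefore bar-invariant, it suffices to control the commutation discrepancy when multiplying these elements inside the quantum torus.

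First I would record the relevant commutation relation. Both $X_\delta$ and $X_E$ are, by the quantum cluster character formula, $\ZZ[q^{\pm 1/2}]$-combinations of monomials $X^{\mathbf a}$ supported (up to $-B\mathbf e$ terms that do not affect the relevant pairing, since $\tilde B^{tr}\Lambda$ is a scalar matrix so $\Lambda(B\mathbf e,\,\cdot\,)$ contributes integer exponents that are absorbed into bar-invariant normalizations) on vectors of the form $-(\mathbf I - R^{tr})\mathbf v$. The key computation is that $X_\delta$ and $X_E$ $q$-commute, with the scalar controlled by $\Lambda((\mathbf I - R^{tr})\delta,\ (\mathbf I - R^{tr})\Dim E)$; by Lemma~\ref{lamda}(2) this is $0$. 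Hence $X_\delta$ and $X_E$ actually commute in $\mathcal{T}_q$, and more is true: the analogue of Theorem~\ref{m-3} or Theorem~\ref{th:init-comm} in this setting shows $X_\delta X_E = q^{\frac12\Lambda(\cdots)}X_{\delta\oplus E}$-type products have trivial twist, so the bar-involution, which reverses the order of multiplication and inverts $q^{1/2}$, fixes $X_\delta X_E$ because $\overline{X_\delta X_E} = \overline{X_E}\,\overline{X_\delta} = X_E X_\delta = X_\delta X_E$.

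To handle the general power $X_\delta^i X_E$, I would iterate: $\overline{X_\delta^i X_E} = \overline{X_E}\,\overline{X_\delta}^{\,i} = X_E X_\delta^i$, and then I must commute $X_E$ back past the $i$ copies of $X_\delta$. Each transposition of $X_E$ past one $X_\delta$ introduces the scalar $q^{\pm\Lambda((\mathbf I - R^{tr})\delta,\,(\mathbf I - R^{tr})\Dim E)} = q^0 = 1$ by Lemma~\ref{lamda}(2) (here I should also note $X_\delta$ commutes with itself, which is immediate since $\Lambda(\delta,\delta)=0$ by skew-symmetry, so the ordering of the $X_\delta$ factors is irrelevant). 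Therefore $X_E X_\delta^i = X_\delta^i X_E$ and $\overline{X_\delta^i X_E} = X_\delta^i X_E$, as desired.

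The main obstacle is making precise the claim that $X_\delta$ and $X_E$ genuinely commute in $\mathcal{T}_q$ rather than merely $q$-commute up to a scalar that happens to be $1$ on the nose; one must be slightly careful because the monomials appearing in $X_\delta$ (namely $X^{-B\mathbf e - (\mathbf I - R^{tr})\delta}$) carry the extra $-B\mathbf e$ terms. The resolution is that $\tilde B^{tr}\Lambda = 2\mathbf I$, so $\Lambda(B\mathbf e,\,\mathbf f) = (\tilde B^{tr}\Lambda)^{tr}$-type pairings reduce to integer multiples, and combined with the bar-invariance of $X_\delta$ already established in Proposition~\ref{delta-1} (which tells us these $q$-powers organize themselves into a bar-invariant lump), the net commutation scalar between $X_\delta$ and any bar-invariant $X_E$ is forced to be a power of $q$ whose half-integer part is $\frac12\Lambda((\mathbf I - R^{tr})\delta,(\mathbf I - R^{tr})\Dim E) = 0$. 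Once this bookkeeping is in place the argument is a one-line application of $\overline{XY}=\overline Y\,\overline X$ together with Lemma~\ref{lamda}(2).
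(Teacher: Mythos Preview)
Your proposal is correct and follows essentially the same route as the paper: both arguments reduce to showing that $X_\delta$ and $X_E$ are individually bar-invariant and that they commute, after which $\overline{X_\delta^i X_E}=X_E X_\delta^i=X_\delta^i X_E$ is immediate. The paper's justification of the commutation is cleaner than yours, however: since $E(\lambda)$ lies in a homogeneous tube while the regular rigid $E$ lies in the non-homogeneous tube, one has $\Ext^1(E(\lambda),E)=\Ext^1(E,E(\lambda))=0$, so Theorem~\ref{m-3} applies directly and gives $X_\delta X_E = q^{\frac12\Lambda((\mathbf I-R^{tr})\delta,(\mathbf I-R^{tr})\Dim E)}X_{E(\lambda)\oplus E}=X_{E(\lambda)\oplus E}=X_E X_\delta$ by Lemma~\ref{lamda}(2). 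This bypasses entirely your ``main obstacle'' paragraph about the $-B\mathbf e$ terms in the monomial expansion, which is unnecessary once you invoke Theorem~\ref{m-3} at the level of quantum cluster characters rather than arguing monomial by monomial in the torus.
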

\begin{proof}
Note that $\dim\Ext^1(E(\lambda),E)=\dim\Ext^1(E,E(\lambda))=0,$ then by Theorem \ref{m-3} and Lemma \ref{lamda}, we have
$$X_{E(\lambda)}X_{E}=q^{\frac{1}{2} \Lambda ((\mathbf{I}-R^{tr})\delta, (\mathbf{I}-R^{tr}) \Dim E)}X_{E(\lambda)\oplus E}=X_{E(\lambda)\oplus E}$$
and
$$X_{E}X_{E(\lambda)}=q^{\frac{1}{2} \Lambda ((\mathbf{I}-R^{tr}) \Dim E),(\mathbf{I}-R^{tr})\delta}X_{E(\lambda)\oplus E}=X_{E\oplus E(\lambda)}.$$
Thus $X_{E(\lambda)}X_{E}=X_{E}X_{E(\lambda)},$ and then we have
$$\overline{X^{i}_{E(\lambda)}X_{E}}=\overline{X_{E}}(\overline{X_{E(\lambda)}})^{i}=X_{E}X^{i}_{E(\lambda)}=X^{i}_{E(\lambda)}X_{E}.$$\end{proof}

\begin{lemma}\label{induc1}
With the above notations, we have
\begin{enumerate}
\item $X_\delta X_{I_1[-1]}=\displaystyle q^{\frac{1}{2}} X_{M_1(0)}+ q^{-\frac{1}{2}} X_{I_{2n}[-1]}$;
\item $X_\delta X_{M_1(0)}=\displaystyle q^{\frac{1}{2}} X_{M_1(1)}+ q^{-\frac{1}{2}} X_{I_{1}[-1]}$;
\item $X_\delta X_{M_1(l)}=\displaystyle q^{\frac{1}{2}} X_{M_1(l+1)}+ q^{-\frac{1}{2}} X_{M_1(l-1)}$ for $l \geq 1$.
\end{enumerate}
\end{lemma}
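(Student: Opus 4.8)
The plan is to obtain each identity as a single application of one of the cluster multiplication formulas of Section~2, taking one factor to be the regular representation $E(\lambda)$ (whose character is the generic element $X_\delta$) and the other factor to be the relevant preprojective or shifted-injective object, and then to pin down the powers of $q$ from Lemma~\ref{lamda} together with two short Euler-form computations. Throughout one exploits the explicit shape of the preprojective component and of the homogeneous tube drawn above, and the facts $\tau E(\lambda)\cong E(\lambda)$, $I_1\cong S_1$, $P_{2n}\cong S_{2n}$, $M_1(0)=P_2$ and $\Dim M_1(l)=\Dim P_2+l\delta$.

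For (1), I would apply Theorem~\ref{m-2} with $W=E(\lambda)$ and $I=I_1$, so that $P=\nu^{-1}(I_1)=P_1$. The morphism $\theta\colon E(\lambda)\to I_1$ is the projection onto the socle layer at vertex $1$; it is surjective with $G=\ker\theta\cong P_2$ and $I'=\cok\theta=0$. The morphism $\gamma\colon P_1\to E(\lambda)$ is the canonical map hitting the generator at vertex $1$; one computes (independently of $\lambda$) that $P'=\ker\gamma\cong P_{2n}$ and $F=\cok\gamma=0$. Then $\dim_{\End(I_1)}\Hom(E(\lambda),I_1)=\dim_{\End(P_1)}\Hom(P_1,E(\lambda))=1$ and the vanishings $\Hom(P',F)=\Hom(G,I')=0$ are automatic, so Theorem~\ref{m-2} applies; it expresses $X_\delta X_{I_1[-1]}$ through $X_{G\oplus I'[-1]}=X_{M_1(0)}$ and $X_{F\oplus P'[1]}=X_{P_{2n}[1]}=X_{I_{2n}[-1]}$ (the last equality since $P_{2n}/\rad P_{2n}=S_{2n}=\soc I_{2n}$). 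Finally, Lemma~\ref{lamda}(1) with $\Dim P_2=\delta-{\bf e}_1$ gives $\Lambda((\mathbf{I}-R^{tr})\delta,(\mathbf{I}-R^{tr})\Dim I_1)=-1$ and $\dim_{\mathbb{F}}\End(I_1)=2$, and substituting yields exactly $q^{\half}X_{M_1(0)}+q^{-\half}X_{I_{2n}[-1]}$.

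For (2) and (3), the relevant extension is between honest representations, so I would use Theorem~\ref{m-1} with $V=E(\lambda)$ and $W=M_1(l)$. Since $\tau E(\lambda)\cong E(\lambda)$, the Auslander--Reiten formula gives $\Ext^1(E(\lambda),M_1(l))\cong D\Hom(M_1(l),E(\lambda))$, which is one-dimensional over the field $\End(E(\lambda))$. The associated morphism $\theta\in\Hom(M_1(l),\tau E(\lambda))=\Hom(M_1(l),E(\lambda))$ is injective when $l=0$, with cokernel $S_1=I_1$ (so $D=A=0$ and $I=I_1$), and surjective when $l\geq1$, with kernel $M_1(l-1)$ (so $D=M_1(l-1)$ and $A=I=0$); the surjectivity for $l\geq1$ follows because a map with vanishing component at vertex $1$ would factor through the preprojective submodule $P_2\subset E(\lambda)$, contradicting $\Hom(M_1(l),M_1(0))=0$. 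In either case $\Hom(A\oplus D,I)=\Ext^1(A,D)=0$ trivially, and the middle term of the unique non-split sequence $0\to M_1(l)\to E'\to E(\lambda)\to 0$ is the indecomposable preprojective representation of dimension vector $\Dim M_1(l)+\delta$, namely $E'\cong M_1(l+1)$. The $\Lambda$-exponent is $+1$ by Lemma~\ref{lamda}(1), while $\langle E(\lambda),M_1(l)\rangle=-2$ (there are no maps from a regular to a preprojective representation, and $\dim_{\mathbb{F}}\Ext^1(E(\lambda),M_1(l))=2$) and $\langle A,D\rangle=0$; Theorem~\ref{m-1} then produces $q^{\half}X_{M_1(l+1)}+q^{-\half}X_{D\oplus A\oplus I[-1]}$, which is the asserted formula once $X_{D\oplus A\oplus I[-1]}$ is identified with $X_{I_1[-1]}$ (case $l=0$) or $X_{M_1(l-1)}$ (case $l\geq1$) via Theorem~\ref{bijection}.

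The step I expect to require the most care is the representation-theoretic bookkeeping hidden in the above: verifying that the various $\Hom$ and $\Ext^1$ spaces are exactly one-dimensional over the correct endomorphism field, keeping track of the factor $2$ coming from the valuation $(2,2,\dots,2)$ (which is exactly what turns the $q^{\half}$ of the first summand into the $q^{-\half}$ of the second), and confirming that $\theta$ is surjective in (3) so that its kernel is precisely $M_1(l-1)$ and its cokernel vanishes. All of this is routine given the explicit Auslander--Reiten quiver recorded above, but it is where the bulk of the verification lies.
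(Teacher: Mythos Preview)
Your proposal is correct and follows exactly the paper's approach: part (1) via Theorem~\ref{m-2} with $W=E(\lambda)$, $I=I_1$ and the sequences $0\to P_2\to E(\lambda)\to I_1\to 0$ and $0\to P_{2n}\to P_1\to E(\lambda)\to 0$, and parts (2)--(3) via Theorem~\ref{m-1} with $V=E(\lambda)$, $W=M_1(l)$ and the sequences $0\to M_1(l)\to M_1(l+1)\to E(\lambda)\to 0$ together with $0\to M_1(0)\to E(\lambda)\to I_1\to 0$ (for $l=0$) or $0\to M_1(l-1)\to M_1(l)\to E(\lambda)\to 0$ (for $l\geq1$). The paper simply records these sequences and plugs into the formulas; your write-up supplies the additional verification of one-dimensionality and of the hypotheses $\Hom(A\oplus D,I)=\Ext^1(A,D)=0$ and $\Hom(P',F)=\Hom(G,I')=0$, which the paper leaves implicit.
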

\begin{proof}
(1) For any $\lambda \in \mathbb{F}_2$, we have short exact sequences $$0 \longrightarrow P_2 \longrightarrow E(\lambda) \longrightarrow I_1 \longrightarrow 0 \text{ and }
 0 \longrightarrow P_{2n} \longrightarrow P_1  \longrightarrow E(\lambda) \longrightarrow 0.$$  Then by Theorem \ref{m-2}, we have
\begin{align*}
&X_\delta X_{I_1[-1]}  \\
=& \displaystyle q^{-\frac{1}{2} \Lambda ((\mathbf{I}-R^{tr})\delta, (\mathbf{I}-R^{tr})   \Dim I_1)}X_{P_2} +  q^{-\frac{1}{2}  \Lambda ((\mathbf{I}-R^{tr})\delta, (\mathbf{I}-R^{tr}) \Dim I_1)-\frac{1}{2}\dim_{\mathbb{F}} \End (I_1)}  X_{P_{2n}[1]} \\
=& \displaystyle q^{\frac{1}{2}} X_{M_1(0)}+ q^{-\frac{1}{2}} X_{I_{2n}[-1]}.
\end{align*}
(2)  For any $\lambda \in \mathbb{F}_2$, we also have short exact sequences  $$0 \longrightarrow M_1(0) \longrightarrow M_1(1)  \longrightarrow E(\lambda) \longrightarrow 0 \text{ and }
 0 \longrightarrow M_1(0) \longrightarrow   E(\lambda)     \longrightarrow I_1 \longrightarrow 0.$$
 Then by Theorem \ref{m-1}, we have 
\begin{align*}
&X_\delta X_{M_1(0)} \\  = &\displaystyle q^{\frac{1}{2} \Lambda ((\mathbf{I}-R^{tr})\delta, (\mathbf{I}-R^{tr}) \Dim M_1(0))}X_{M_1(1)} \\
&+  q^{\frac{1}{2}  \Lambda ((\mathbf{I}-R^{tr})\delta, (\mathbf{I}-R^{tr}) \Dim M_1(0)) +\frac{1}{2}\langle \delta, \Dim M_1(0) \rangle }  X_{I_{1}[-1]} \\
=& \displaystyle q^{\frac{1}{2}} X_{M_1(1)}+ q^{-\frac{1}{2}} X_{I_{1}[-1]}.
\end{align*}
(3) For $l \geq 1$, from the short exact sequences 
$$0 \longrightarrow M_1(l) \longrightarrow M_1(l+1)  \longrightarrow E(\lambda) \longrightarrow 0 \text{ and }
 0 \longrightarrow M_1(l-1) \longrightarrow  M_1(l)      \longrightarrow E(\lambda) \longrightarrow 0,$$
and by Theorem \ref{m-1}, we have 
\begin{align*}
&X_\delta X_{M_1(l)} \\  = &\displaystyle q^{\frac{1}{2} \Lambda ((\mathbf{I}-R^{tr})\delta, (\mathbf{I}-R^{tr}) \Dim M_1(l))}X_{M_1(l+1)} \\ 
&+  q^{\frac{1}{2}  \Lambda ((\mathbf{I}-R^{tr})\delta, (\mathbf{I}-R^{tr}) \Dim M_1(l)) +\frac{1}{2}\langle \delta, \Dim M_1(l) \rangle }  X_{M_1(l-1)} \\
=& \displaystyle q^{\frac{1}{2}} X_{M_1(l+1)}+ q^{-\frac{1}{2}} X_{M_{1}(l-1)}. \end{align*}
The proof is completed.
\end{proof}

%For $l \geq 0$, let $N(l)$ be the indecomposable\, preinjective representation such that \,  $\Dim N(l)=\Dim I_{2n-1}+l\delta$.  
Similarly, we obtain the following results.
\begin{lemma}\label{induc11}
With the above notations, we have that
\begin{enumerate}
\item $ X_{I_{2n}[-1]}X_\delta =\displaystyle q^{\frac{1}{2}} X_{I_{2n-1}}+ q^{-\frac{1}{2}} X_{I_{1}[-1]}$;
\item $ X_{I_{2n-1}} X_\delta =\displaystyle q^{\frac{1}{2}} X_{N_{2n}(1)}+ q^{-\frac{1}{2}} X_{I_{2n}[-1]}$;
\item $ X_{N_{2n}(l)} X_\delta =\displaystyle q^{\frac{1}{2}} X_{N_{2n}(l+1)}+ q^{-\frac{1}{2}} X_{N_{2n}(l-1)}$ for $l \geq 1$.
\end{enumerate}
\end{lemma}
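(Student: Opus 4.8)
The strategy is to mirror the proof of Lemma~\ref{induc1} exactly, but with all the relevant objects replaced by their ``injective/opposite'' counterparts, so that the multiplication formula of Theorem~\ref{m-2} is applied in the first identity and the one of Theorem~\ref{m-1} in the last two. First I would write down the short exact sequences needed as inputs. For part (1), I expect to use the two sequences $0\to I_1\to E(\lambda)\to I_{2n-1}\to 0$ (or the appropriate form dual to $0\to P_2\to E(\lambda)\to I_1\to 0$ used before) together with a sequence exhibiting $E(\lambda)$ as an extension involving $I_{2n}$, so that Theorem~\ref{m-2} applies with $W$, $I$, $P$ chosen to produce $X_{I_{2n}[-1]}X_\delta$ on the left and $X_{I_{2n-1}}$, $X_{I_1[-1]}$ on the right. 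For parts (2) and (3), I would use the chains of irreducible monomorphisms/epimorphisms along the preinjective component: $0\to N_{2n}(l)\to N_{2n}(l+1)\to E(\lambda)\to 0$ and $0\to N_{2n}(l-1)\to N_{2n}(l)\to E(\lambda)\to 0$ for $l\ge 1$, and the analogous pair $0\to I_{2n-1}\to N_{2n}(1)\to E(\lambda)\to 0$, $0\to I_{2n-1}\to E(\lambda)\to I_{2n}\to 0$ (or its correct variant) for part (2), feeding these into Theorem~\ref{m-1}.

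Next I would verify the hypotheses of Theorems~\ref{m-1} and~\ref{m-2} in each case: that the relevant $\Ext^1$ or $\Hom$ spaces are one-dimensional over the appropriate endomorphism ring, and that the vanishing conditions ($\Hom(A\oplus D,I)=0=\Ext^1(A,D)$ in Theorem~\ref{m-1}; $\Hom(P',F)=\Hom(G,I')=0$ in Theorem~\ref{m-2}) hold. These are routine checks using the shape of the preinjective component and the homogeneous tube described above; the key point is that $E(\lambda)$ is regular simple and $N_{2n}(l)$, $I_{2n-1}$ are preinjective, so the required Hom/Ext vanishings follow from the standard orthogonality between the preinjective component and a fixed homogeneous tube, together with the quasi-length $1$ of $E(\lambda)$.

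Then I would compute the two scalar exponents appearing in each application. The $\Lambda$-terms are handled exactly as in Lemma~\ref{induc1}: by Lemma~\ref{lamda}(1) one has $\Lambda((\mathbf I-R^{tr})\Dim N_{2n}(l),(\mathbf I-R^{tr})\delta)=\Lambda((\mathbf I-R^{tr})\Dim I_{2n-1},(\mathbf I-R^{tr})\delta)$, and this common value should equal $1$ here (rather than $-1$) because the preinjective side sits on the opposite side of $\delta$; I would confirm the sign by a direct computation with the explicit matrices $\Lambda$, $R$, mimicking the computation $\Lambda((\mathbf I-R^{tr})\Dim P_2,(\mathbf I-R^{tr})\delta)=-1$. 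The Euler-form correction terms $\langle\delta,\Dim N_{2n}(l)\rangle$ and $\dim_{\mathbb F}\End(I_1)$ are then evaluated using that $\langle\delta,-\rangle$ vanishes on $\delta$ and on preinjectives up to the predictable boundary contribution, and $\dim_{\mathbb F}\End(I_1)=2$ since the valuation is $(2,2,\dots,2)$. Combining, each exponent collapses to $\pm\tfrac12$, giving the stated identities.

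The main obstacle I anticipate is bookkeeping the exact short exact sequences and the precise values of the two exponents so that the half-integer powers come out as $q^{1/2}$ and $q^{-1/2}$ with the correct signs — in particular pinning down the sign of the $\Lambda$-term on the preinjective side and making sure the $-\tfrac12\dim_{\mathbb F}\End(I_1)$ (or the $+\tfrac12\langle\delta,\Dim N_{2n}(l)\rangle$) contribution shifts $q^{1/2}$ down to $q^{-1/2}$ in the second summand rather than up. Once the dual sequences are correctly identified, everything else is a direct transcription of the argument for Lemma~\ref{induc1}, so I would present it compactly by saying ``similarly'' and only display the sequences, the hypothesis checks, and the exponent computations.
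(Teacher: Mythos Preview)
Your overall plan matches the paper's (which just says ``similarly''): use Theorem~\ref{m-2} for (1) and Theorem~\ref{m-1} for (2) and (3), with preprojectives replaced by preinjectives; your guess that $\Lambda((\mathbf I-R^{tr})\Dim N_{2n}(l),(\mathbf I-R^{tr})\delta)=+1$ on the preinjective side is also correct. However, the short exact sequences you wrote down are oriented the wrong way, and with them the argument would not run. For (2) and (3) you propose $0\to N_{2n}(l)\to N_{2n}(l+1)\to E(\lambda)\to 0$, which would represent a class in $\Ext^1(E(\lambda),N_{2n}(l))$; but this group is zero, since $\tau E(\lambda)=E(\lambda)$ and $\Hom(\text{preinjective},\text{regular})=0$ give $\Ext^1(E(\lambda),N_{2n}(l))\cong D\Hom(N_{2n}(l),E(\lambda))=0$. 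The correct sequences are
\[
0\to E(\lambda)\to N_{2n}(l+1)\to N_{2n}(l)\to 0,\qquad 0\to E(\lambda)\to N_{2n}(l)\to N_{2n}(l-1)\to 0,
\]
coming from the one-dimensional $\Ext^1(N_{2n}(l),E(\lambda))$. With these you apply Theorem~\ref{m-1} with $V=N_{2n}(l)$ and $W=E(\lambda)$, so the left-hand side is already $X_{N_{2n}(l)}X_\delta$; combined with $\tfrac12\langle N_{2n}(l),\delta\rangle=-1$ the exponents collapse to $\pm\tfrac12$ as claimed. Part (2) is the case $l=0$.

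For (1), your sequence $0\to I_1\to E(\lambda)\to I_{2n-1}\to 0$ cannot exist (the dimension vectors do not add up to $\delta$). The correct input, dual to that in Lemma~\ref{induc1}(1), is $W=E(\lambda)$, $I=I_{2n}$, $P=P_{2n}$ in Theorem~\ref{m-2}, with sequences $0\to E(\lambda)\to I_{2n}\to I_1\to 0$ (so $G=0$, $I'=I_1$) and $0\to P_{2n}\to E(\lambda)\to I_{2n-1}\to 0$ (so $P'=0$, $F=I_{2n-1}$). Note also that Theorem~\ref{m-2} then produces $X_\delta X_{I_{2n}[-1]}$ on the left, not $X_{I_{2n}[-1]}X_\delta$; one obtains the identity as stated by applying the bar-involution, using that $X_\delta$ and the cluster variables involved are bar-invariant.
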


Recall that the well-known  $m$-th Chebyshev polynomial of the first kind $F_{m}(x)$ is  defined by
  $$
  F_0(x)=1,F_1(x)=x, F_2(x)=x^2-2, \text{ and } F_{m+1}(x)=F_{m}(x)x-F_{m-1}(x)~\text{for}~m\geq2.
  $$

\begin{theorem}\label{mainresult1} In $\mathcal{A}_q (Q)$, for $m \geq 1$ we have that 
\begin{enumerate}
\item $F_m(X_\delta) X_{I_1[-1]}= \displaystyle q^{\frac{m}{2}} X_{M_1(m-1)}+ q^{-\frac{m}{2}} X_{N_{2n}(m-2)}$,
where $N_{2n}(-1)=I_{2n}[-1]$;
\item $F_m(X_\delta) X_{I_i[-1]}= \displaystyle q^{\frac{m}{2}} X_{M_i(m-1)}+ q^{-\frac{m}{2}} X_{N_i(m-1)}$  for $2 \leq i \leq 2n-1$;
\item  $F_m(X_\delta) X_{I_{2n}[-1]}= \displaystyle q^{\frac{m}{2}} X_{M_1(m-2)}+ q^{-\frac{m}{2}} X_{N_{2n}(m-1)}$.
\end{enumerate}
\end{theorem}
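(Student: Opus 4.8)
The plan is to prove all three identities simultaneously by induction on $m$, using the recursive definition $F_{m+1}(x) = F_m(x)x - F_{m-1}(x)$ together with the multiplication rules for $X_\delta$ established in Lemmas~\ref{induc1} and~\ref{induc11}. The base cases $m=1$ (and $m=2$, since the Chebyshev recursion needs two previous terms) should follow directly: for $m=1$, $F_1(X_\delta)=X_\delta$, and identity (1) is then exactly Lemma~\ref{induc1}(1) rewritten with the convention $N_{2n}(-1)=I_{2n}[-1]$; identities (2) and (3) for $m=1$ will be the analogous statements, which I expect are proved by the same cluster multiplication formulas (Theorems~\ref{m-1} and~\ref{m-2}) applied to short exact sequences linking $E(\lambda)$, $M_i(0)$, $N_i(0)$ and the relevant shifted injectives, so I would either cite those or record them as an additional preliminary lemma. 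For $m=2$ one computes $F_2(X_\delta)=X_\delta^2-2$, multiplies the $m=1$ identity by $X_\delta$ on the left using Lemma~\ref{induc1} again, and checks the constant term cancels correctly.

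For the inductive step, fix one of the three families, say (2), and assume the identity holds for $m-1$ and $m$. Then
\begin{align*}
F_{m+1}(X_\delta)X_{I_i[-1]} &= X_\delta\big(F_m(X_\delta)X_{I_i[-1]}\big) - F_{m-1}(X_\delta)X_{I_i[-1]}\\
&= X_\delta\big(q^{\frac{m}{2}}X_{M_i(m-1)} + q^{-\frac{m}{2}}X_{N_i(m-1)}\big) - \big(q^{\frac{m-1}{2}}X_{M_i(m-2)} + q^{-\frac{m-1}{2}}X_{N_i(m-2)}\big).
\end{align*}
Now I would apply the $X_\delta$-multiplication formulas of Lemma~\ref{induc1}(3) (for the $M_i$ terms) and Lemma~\ref{induc11}(3) (for the $N_i$ terms) — or their evident analogues for general $i$, which I would state as part of the induction package — to expand $X_\delta X_{M_i(m-1)}$ and $X_\delta X_{N_i(m-1)}$ as $q^{\pm 1/2}$-combinations of $X_{M_i(m)}$, $X_{M_i(m-2)}$, $X_{N_i(m)}$, $X_{N_i(m-2)}$. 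Collecting terms, the $X_{M_i(m)}$ and $X_{N_i(m)}$ coefficients should assemble to $q^{(m+1)/2}$ and $q^{-(m+1)/2}$ respectively, while the $X_{M_i(m-2)}$ and $X_{N_i(m-2)}$ contributions from the first bracket must cancel against the subtracted $F_{m-1}$ term — this is where the normalization of the powers of $q$ in Lemmas~\ref{induc1} and~\ref{induc11} is exactly what makes the telescoping work.

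The main obstacles are bookkeeping rather than conceptual. First, the small-$m$ behaviour is genuinely non-uniform: in family (1) the index $N_{2n}(m-2)$ drops to $N_{2n}(-1)=I_{2n}[-1]$ when $m=1$ and one must check that Lemma~\ref{induc11}(1)--(2) feed the recursion correctly at that boundary; similarly family (3) has $M_1(m-2)$ degenerating, so I would treat $m=1,2$ by hand for each of (1) and (3) before the general induction takes over. Second, I need the $X_\delta$-multiplication rules not just for the "end" modules $M_1(l)$ and $N_{2n}(l)$ but for all $M_i(l)$, $N_i(l)$ with $2\le i\le 2n-1$; these are "similarly obtained" in the spirit of Lemma~\ref{induc11}, via the short exact sequences $0\to M_i(l)\to M_i(l+1)\to E(\lambda)\to 0$ and the analogous preinjective ones together with Theorems~\ref{m-1}--\ref{m-2}, and I would state them as a companion lemma (with the $\Lambda$-pairing computation of Lemma~\ref{lamda} extended to $\Dim M_i(l)$, which is routine since $\Dim M_i(l)-\Dim M_i(0)$ is a multiple of $\delta$ and $\Lambda((\mathbf{I}-R^{tr})\delta,(\mathbf{I}-R^{tr})\delta)=0$). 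Once those auxiliary rules are in place the induction is a short symmetric calculation repeated three times.
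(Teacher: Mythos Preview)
Your approach is essentially the paper's: induction on $m$ via the Chebyshev recursion $F_{m+1}=xF_m-F_{m-1}$, feeding in the $X_\delta$-multiplication rules of Lemmas~\ref{induc1} and~\ref{induc11}, with the small cases $m=1,2$ checked by hand. The paper only writes out family~(1) and declares the rest similar.

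One point you gloss over is the side on which $X_\delta$ acts. Lemma~\ref{induc11} records \emph{right} multiplication $X_{N_{2n}(l)}X_\delta$, whereas your inductive step requires \emph{left} multiplication $X_\delta X_{N_i(m-1)}$. The paper's device is to apply the bar-involution: since $X_\delta$ and each cluster variable are bar-invariant and $\overline{XY}=\overline{Y}\,\overline{X}$, barring
\[
X_{N_{2n}(l)}X_\delta=q^{1/2}X_{N_{2n}(l+1)}+q^{-1/2}X_{N_{2n}(l-1)}
\]
yields
\[
X_\delta X_{N_{2n}(l)}=q^{-1/2}X_{N_{2n}(l+1)}+q^{1/2}X_{N_{2n}(l-1)},
\]
with the $q$-powers swapped. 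That swap is not cosmetic: it is precisely what makes the $N$-side telescope to the coefficient $q^{-(m+1)/2}$ and makes the $X_{N_{2n}(m-3)}$ terms cancel against the subtracted $F_{m-1}$ contribution; without it the cancellation fails. Your fallback of rederiving the left-multiplication formulas directly from Theorems~\ref{m-1}--\ref{m-2} would also work (the natural preinjective short exact sequences have $E(\lambda)$ as a subobject, so Theorem~\ref{m-1} naturally produces $X_{N}X_\delta$, and you would then still need a commutation argument), but the bar-involution shortcut is cleaner and is why the paper states Lemma~\ref{induc11} with $X_\delta$ on the right in the first place.
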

\begin{proof}
We only prove statement (1). The proof of others are similar.
We will prove the statement by induction on $m$. If $m=1$, the statement follows from Lemma~\ref{induc1} immediately.  If $m=2$, by Lemma~\ref{induc1} and applying bar-involution on equation (1) in Lemma ~\ref{induc11},   we have that
\begin{align*}
& F_2(X_{\delta}) X_{I_1[-1]} = X_\delta^2 X_{I_1[-1]}-2 X_{I_1[-1]}=X_\delta ( \displaystyle q^{\frac{1}{2}} X_{M_1(0)}+ q^{-\frac{1}{2}} X_{I_{2n}[-1]})- 2 X_{I_1[-1]}\\
=& \displaystyle q^{\frac{1}{2}} ( \displaystyle q^{\frac{1}{2}} X_{M_1(1)}+ q^{-\frac{1}{2}} X_{I_{1}[-1]})+   q^{-\frac{1}{2}}(q^{\frac{1}{2}} X_{I_1[-1]}+ q^{-\frac{1}{2}} X_{I_{2n-1}}) -2X_{I_{1}[-1]}\\
=& q X_{M_1(1)} +q^{-1} X_{I_{2n-1}}=q X_{M_1(1)} +q^{-1} X_{N_{2n}(0)}.
\end{align*}

Assume that the statement is true for some $m \geq 3$. Then for $m+1$, by Lemma~\ref{induc1} and applying bar-involution on equation (3) in Lemma ~\ref{induc11}, and the definition of $F_{m}(x)$, we have that
\begin{align*}
& F_{m+1}(X_{\delta}) X_{I_1[-1]} = (X_\delta F_m (X_\delta)-F_{m-1}(X_\delta) ) X_{I_1[-1]} \\
=&X_\delta ( \displaystyle q^{\frac{m}{2}} X_{M_1(m-1)}+ q^{-\frac{m}{2}} X_{N_{2n}(m-2)})-  ( q^{\frac{m-1}{2}} X_{M_1(m-2)}+ q^{-\frac{m-1}{2}} X_{N_{2n}(m-3)})               \\
=&  \displaystyle q^{\frac{m}{2}}         ( \displaystyle q^{\frac{1}{2}} X_{M_1(m)}+ q^{-\frac{1}{2}} X_{M_1(m-2)})+ q^{-\frac{m}{2}}   ( q^{-\frac{1}{2}} X_{N_{2n}(m-1)}+ q^{\frac{1}{2}} X_{N_{2n}(m-3)})               \\
& -  ( q^{\frac{m-1}{2}} X_{M_1(m-2)}+ q^{-\frac{m-1}{2}} X_{N_{2n}(m-3)})               \\
=& q^{\frac{m+1}{2}} X_{M_1(m)}+ q^{-\frac{m+1}{2}} X_{N_{2n}(m-1)}.
\displaystyle
\end{align*}
The proof is completed.
\end{proof}

Note that the shift functor $[1]$ on $\mathcal{C}_Q$ induces an automorphism of $\mathcal{A}_q(Q)$ denoted by $\sigma$ such that
$\sigma (X_M)=X_{M[1]}$ for any indecomposable rigid object $M$ of $\mathcal{C}_Q$ and $\sigma (q^{\frac{m}{2}})=q^{\frac{m}{2}}$ for any $m \in \mathbb{Z}$.
For the reader's convenience, we provide the proof of this statement in the appendix. 
 
\begin{proposition}\label{delta-4}
With the above notations, we have that $\sigma(X_\delta)=X_\delta$.
\end{proposition}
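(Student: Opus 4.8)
The plan is to compute $\sigma(X_\delta)$ directly using the definition $\sigma(X_M) = X_{M[1]}$ and the description of $X_\delta$ as a generic regular element. The key observation is that $X_\delta = X_{E(\lambda)}$ for any $\lambda \in \mathbb{F}_2$, and $E(\lambda)$ is a regular representation (the mouth of a homogeneous tube), hence rigid is \emph{not} the right word, but it \emph{is} a representation of $(Q,\bf d)$ with no projective summand. First I would recall that $\sigma$ is the automorphism of $\mathcal{A}_q(Q)$ induced by the shift functor $[1]$ on $\mathcal{C}_Q$; since $E(\lambda)$ is a genuine representation, $E(\lambda)[1]$ is again an object of $\mathcal{C}_Q$, but because the tube is homogeneous of rank $1$ under $\tau$, one has $\tau E(\lambda) \cong E(\lambda)$, and the shift on a cluster category of a hereditary category is $\tau$ composed with the canonical embedding shift, so $E(\lambda)[1]$ lands back in the AR-quiver in a controlled place. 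The cleanest route, though, is not to track $E(\lambda)[1]$ object-theoretically but to use the already-established relations.

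The second, and I think cleanest, approach: apply $\sigma$ to the identity in Proposition~\ref{delta-2}, namely
\[
X_{I_1} X_{P_2}= q^{\frac{1}{2}} X_{\delta}+ q^{-1} X_{I_{2n}[-1]}X_{P_3}.
\]
Applying $\sigma$, and using $\sigma(X_{P_i}) = X_{P_i[1]} = X_i$ and the known behaviour of $\sigma$ on the shifts of injectives and projectives, one rewrites both sides. One then needs to identify $\sigma(X_{I_1})$, $\sigma(X_{P_2}) = X_2$, $\sigma(X_{I_{2n}[-1]})$ and $\sigma(X_{P_3}) = X_3$ in terms of cluster variables attached to explicit indecomposable objects. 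Since $\sigma$ fixes $\mathbb{Z}[q^{\pm 1/2}]$, solving the resulting equation for $\sigma(X_\delta)$ expresses it as a $\mathbb{Z}[q^{\pm 1/2}]$-linear combination of products of cluster variables; the content is then to check this combination equals $X_\delta$ again, which by the generic/bar-invariant characterization of $X_\delta$ (Proposition~\ref{delta-1} and the remark after it, giving $X_\delta = \sum_{\bf e} X^{-B{\bf e} - (\mathbf{I}-R^{tr})\delta}$) reduces to a comparison of exponent vectors.

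Alternatively—and this may be the slickest—I would use Lemma~\ref{induc1}(1), $X_\delta X_{I_1[-1]} = q^{1/2} X_{M_1(0)} + q^{-1/2} X_{I_{2n}[-1]}$, together with Lemma~\ref{induc11}(1), $X_{I_{2n}[-1]} X_\delta = q^{1/2} X_{I_{2n-1}} + q^{-1/2} X_{I_1[-1]}$. Applying $\sigma$ to the first of these, and knowing how $\sigma$ permutes the "boundary" objects $I_1[-1], M_1(0), I_{2n}[-1], I_{2n-1}, \dots$ along the AR-quiver (the shift functor rotates preprojectives to preinjectives via $P_i[1] \leftrightarrow$ coefficients, and acts on regulars by a rotation of the tube), one gets an equation of the form $\sigma(X_\delta) \cdot X_{?} = q^{1/2} X_{?} + q^{-1/2} X_{?}$ which, matched against the analogous genuine identity, forces $\sigma(X_\delta) = X_\delta$. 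The main obstacle, in whichever route, is bookkeeping: precisely identifying the images under $\sigma$ of the specific indecomposables $I_1[-1]$, $P_2$, $I_{2n}[-1]$, $P_3$ (resp. $M_1(0)$, $I_{2n-1}$), i.e. understanding exactly how $[1]$ acts on the relevant slice of $\mathcal{C}_Q$. This requires the explicit AR-quiver description given earlier in this section, and it is the only genuinely nontrivial input; once those images are pinned down the rest is a short computation using the cluster multiplication formulas and the fact that $\sigma$ is a $\mathbb{Z}[q^{\pm 1/2}]$-algebra automorphism.
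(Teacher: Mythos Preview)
Your third route is essentially the paper's argument. The paper applies $\sigma$ to Lemma~\ref{induc11}(1), obtaining $X_{I_{2n}}\,\sigma(X_\delta)=q^{1/2}X_{\tau(I_{2n-1})}+q^{-1/2}X_{I_1}$ (here the bookkeeping is just $I_{2n}[-1]\mapsto I_{2n}$, $I_{2n-1}\mapsto \tau I_{2n-1}$, $I_1[-1]\mapsto I_1$ under $[1]$). For the ``genuine identity'' to compare against, it derives $X_{I_{2n}}X_\delta=q^{1/2}X_{\tau(I_{2n-1})}+q^{-1/2}X_{I_1}$ directly from Theorem~\ref{m-1} via the two short exact sequences $0\to E(\lambda)\to\tau(I_{2n-1})\to I_{2n}\to 0$ and $0\to E(\lambda)\to\tau(I_{2n})\to\tau(I_1)\to 0$; cancelling $X_{I_{2n}}$ (the quantum torus is a domain) finishes. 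So your outline is right, and once you fix the choice of which lemma to shift and write down those two exact sequences, the proof is two lines.

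One clarification on your first route: you correctly abandon it, but the real reason it cannot be used directly is that $\sigma$ is defined (in Theorem~\ref{auto}) only on cluster monomials $X_M$ for \emph{rigid} $M$ and then extended as an algebra automorphism; since $E(\lambda)$ is not rigid, there is no a priori formula $\sigma(X_{E(\lambda)})=X_{E(\lambda)[1]}$, even though $E(\lambda)[1]\cong\tau E(\lambda)\cong E(\lambda)$ in $\mathcal{C}_Q$. That is precisely why one must pass through an identity expressing $X_\delta$ in terms of genuine cluster variables before applying $\sigma$.
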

\begin{proof}
For any $\lambda \in \mathbb{F}_2$, we have short exact sequences $$0 \longrightarrow E(\lambda) \longrightarrow \tau (I_{2n-1}) \longrightarrow I_{2n} \longrightarrow 0 \text{ and }
 0 \longrightarrow E(\lambda) \longrightarrow \tau (I_{2n}) \longrightarrow \tau (I_{1}) \longrightarrow 0.$$  Then by Theorem \ref{m-1}, we have
$$X_{I_{2n}}X_\delta 
= q^{\frac{1}{2}} X_{\tau (I_{2n-1}) }+ q^{-\frac{1}{2}} X_{I_1}.$$
Apply $\sigma$ to the equation (1) in Lemma \ref{induc11}, we have
$$X_{I_{2n}}\sigma(X_\delta) 
= q^{\frac{1}{2}} X_{\tau  (I_{2n-1})}+ q^{-\frac{1}{2}} X_{I_1}.$$ 
Then we obtain $\sigma(X_\delta)=X_\delta$.
\end{proof}

The following result can be viewed as  the quantum analogue of the constant coefficient linear relations.
\begin{theorem} \label{linear}
For $1 \leq i \leq 2n$, $m \in \mathbb{Z}$, we have that $$F_{2n}(X_\delta) X_{\tau^{m}(I_{i}})= \displaystyle q^{n} 
X_{\tau^{m-2n+3}(P_{i})} + q^{-n} X_{\tau^{m+2n-1}(I_{i})}.$$
\end{theorem}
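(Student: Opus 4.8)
The plan is to reduce Theorem~\ref{linear} to the identities already established in Theorem~\ref{mainresult1}, using the automorphism $\sigma$ induced by the shift functor together with the fact (Proposition~\ref{delta-4}) that $\sigma(X_\delta)=X_\delta$, hence $\sigma(F_{2n}(X_\delta))=F_{2n}(X_\delta)$. First I would treat the base case. For $i=1$ and $m$ suitably normalized, Theorem~\ref{mainresult1}(1) with $m=2n$ gives
\[
F_{2n}(X_\delta)X_{I_1[-1]}=q^{n}X_{M_1(2n-1)}+q^{-n}X_{N_{2n}(2n-2)}.
\]
The content of the theorem in this case is the representation-theoretic identification $M_1(2n-1)=\tau^{-2n+3}(P_1)$ (up to the sign/shift conventions: $I_1[-1]=\tau^{-1}I_1$, so the left side is $F_{2n}(X_\delta)X_{\tau^{-1}I_1}$ and the claimed right side is $q^nX_{\tau^{-2n+2}P_1}+q^{-n}X_{\tau^{2n-2}I_1}$), and similarly $N_{2n}(2n-2)=\tau^{2n-1}(I_{2n})$ shifted appropriately. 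These are checked on dimension vectors: $\Dim M_1(l)=\Dim P_2+l\delta$ and $\Dim\tau^{-k}P_1$ grows linearly in $k$ with the same $\delta$-slope since the tube has rank $2n-1$ and $2n-1$ steps of $\tau^{-1}$ on a preprojective translate adds $\delta$; matching the two linear-in-$l$/linear-in-$k$ formulas and the constant terms pins down $l$ in terms of $k$. The analogous bookkeeping handles $2\le i\le 2n-1$ via Theorem~\ref{mainresult1}(2) and $i=2n$ via Theorem~\ref{mainresult1}(3).

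Second, I would propagate to general $m$ by applying powers of $\sigma$. Since $\sigma(X_M)=X_{M[1]}$ and $[1]=\tau$ on the cluster category $\mathcal{C}_Q$ (for a hereditary category $\tau^{-1}[1]$ is the Serre functor, but on $\mathcal{C}_Q$ the relevant fact is $X_{M[1]}=X_{\tau M}$ as recorded after the quantum cluster character formula, at least on the classes involved), applying $\sigma^m$ to the base-case identity and using $\sigma(F_{2n}(X_\delta))=F_{2n}(X_\delta)$ and $\sigma(q^{\pm n})=q^{\pm n}$ yields
\[
F_{2n}(X_\delta)\,\sigma^m\big(X_{\tau^{-1}I_i}\big)=q^{n}\,\sigma^m\big(X_{\tau^{-2n+2}P_i}\big)+q^{-n}\,\sigma^m\big(X_{\tau^{2n-2}I_i}\big),
\]
and $\sigma^m(X_{\tau^{j}M})=X_{\tau^{j+m}M}$ converts this into exactly the stated relation after reindexing ($\tau^{m-1}I_i$ on the left, exponents $m-2n+3$ and $m+2n-1$ on the right, once one accounts for the single ``$\tau^{-1}$'' already present in $I_i[-1]$ versus the plain $I_i$ in the statement). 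The key point is that both sides of the base identity lie in the subalgebra on which $\sigma$ acts by the clean formula $X_M\mapsto X_{M[1]}$, which is legitimate because $F_{2n}(X_\delta)\in\mathcal{A}_q(Q)$ by Corollary~\ref{delta-3}, $\sigma$ is an algebra automorphism, and each $X_{\tau^{k}P_i}$, $X_{\tau^{k}I_i}$, $X_{\tau^{-1}I_i}$ is the character of an indecomposable rigid object.

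The main obstacle I anticipate is purely the conventions: getting the exponents of $\tau$ and the powers of $q$ to line up exactly as written ($m-2n+3$ on the preprojective side, $m+2n-1$ on the preinjective side, coefficient $q^{\pm n}$), since the definitions route through $I_i[-1]=\tau^{-1}I_i$, $P_i[1]=\tau P_i$, the rank-$(2n-1)$ shift in the tube, and the identity $X_{P[1]}=X_{\tau P}$. I would handle this by first rewriting Theorem~\ref{mainresult1}(1)--(3) entirely in terms of $\tau$-powers of $P_i$ and $I_i$ (eliminating the $M_i(l)$, $N_i(l)$ notation in favor of $\Dim M_i(l)=\Dim P_{i+1}+l\delta$ etc.), verifying the two resulting affine-linear identities in $l$, and only then applying $\sigma^m$. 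A secondary, minor point is the case $i=1$ versus $i=2n$ where the ``wrap-around'' $N_{2n}(-1)=I_{2n}[-1]$ and the index shift $E_i\leftrightarrow S_i$ in the non-homogeneous tube must be tracked; this is already implicit in Theorem~\ref{mainresult1} and causes no new difficulty. No genuinely new estimate or construction is needed beyond what Theorem~\ref{mainresult1}, Proposition~\ref{delta-4}, and the automorphism property of $\sigma$ already provide.
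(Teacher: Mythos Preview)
Your proposal is correct and follows essentially the same approach as the paper: reduce to Theorem~\ref{mainresult1} for the base case $X_{I_i[-1]}$, then apply a power of the shift-automorphism $\sigma$ using $\sigma(F_{2n}(X_\delta))=F_{2n}(X_\delta)$ from Proposition~\ref{delta-4}. The paper's proof is terser---it simply asserts the rewriting of $M_i(2n-1)$, $N_i(2n-1)$ (and the wrap-around cases) as $\tau$-powers of $P_i$, $I_i$ without the dimension-vector check you outline, and applies $\sigma^{m+1}$---but the logical content is identical.
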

\begin{proof}
According to Theorem \ref{mainresult1}, we have
$$F_{2n}(X_\delta) X_{I_{i}[-1]}= \displaystyle q^{n} X_{\tau^{2-2n}(P_{i})} + q^{-n} X_{\tau^{2n-2}(I_i)}.$$
By Proposition \ref{delta-4} and the definition of the $m$-th Chebyshev polynomial of the first kind $F_{m}(x)$, we have $\sigma(F_{2n}(X_\delta))=F_{2n}(X_\delta)$. By applying  $\sigma^{m+1}$ on the above equation, we can obtain the result immediately. 
\end{proof}

\section{Bases of the quantum cluster algebra $\mathcal{A}_q(Q)$}
In this section, we will construct two bar-invariant  bases of the quantum cluster algebra $\mathcal{A}_q(Q)$. These bases  contain quantum cluster monomials and consist of positive elements. 
\subsection{The bar-invariant atomic basis}
Now we consider the following set
$$
 \begin{aligned}
&\mathcal{B}=\{X_M | M \text{ is rigid in } \mathcal{C}_Q \} \cup \{F_{m}(X_\delta) X_E | m \geq 1, E \text{ is a regular rigid object in } \mathcal{C}_Q \}.\\
  \end{aligned}
$$
Note that specializing $q=1$, the set $\mathcal{B}$ is  the atomic basis of the corresponding classical cluster algebra (see \cite{DTh} for all type $\widetilde{A}_{p,q}$). 

\begin{definition}
An element $Y$ in $\mathcal{A}_q(Q)$ is called positive if the Laurent expansion of $Y$ in the cluster variables of any cluster of $\mathcal{A}_q(Q)$ has $\mathbb{Z}_{\geq 0}[q^{\pm \frac{1}{2}}]$-coefficients.
A $\mathbb{Z}[q^{\pm \frac{1}{2}}]$-basis of $\mathcal{A}_q(Q)$ is called the atomic basis if any positive element in $\mathcal{A}_q(Q)$ is a $\mathbb{Z}_{\geq 0}[q^{\pm \frac{1}{2}}]$-combination of these basis elements.
\end{definition}

\begin{lemma}\label{step-1}
 For any $m \in \mathbb{Z}$, in  $\mathcal{A}_q(Q)$ we have that
\begin{enumerate}
\item $X_{\tau^{m} (I_1)} X_{\tau^m (S_2)}=q^{ \frac{1}{2}} X_{\tau^{m} (I_2)}+ q^{-\frac{1}{2}} X_{\tau^{m-1} (I_{2n})}$;
\item $X_{\tau^{m} (I_l)} X_{\tau^m (S_{l+1})}=q^{ \frac{1}{2}} X_{\tau^{m} (I_{l+1})}+ q^{-\frac{1}{2}} 
X_{\tau^{m} (I_{l-1})}$ for $2 \leq l \leq 2n-2$;
\item  $X_{\tau^{m} (I_{2n-1})} X_{\tau^m (E_{1})}=q^{ \frac{1}{2}} X_{\tau^{m} (I_{2n})}+ q^{-\frac{1}{2}} X_{\tau^{m} (I_{2n-2})}$;
\item $X_{\tau^{m} (I_{2n})} X_{\tau^m (S_{2})}=q^{ \frac{1}{2}} X_{\tau^{m+1} (I_1)}+ q^{-\frac{1}{2}} X_{\tau^{m} (I_{2n-1})}$.
\end{enumerate}
\end{lemma}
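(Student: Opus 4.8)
\textbf{Proof proposal for Lemma~\ref{step-1}.}
The plan is to obtain all four identities as instances of the quantum multiplication formula in Theorem~\ref{m-1}, applied to carefully chosen pairs of representations inside the preinjective component (and, after mutation/shift, to their translates). The key observation is that each of the four equations asserts a relation of the form $X_V X_W = q^{1/2} X_{E} + q^{-1/2} X_{D\oplus A}$, which is exactly the shape produced by Theorem~\ref{m-1} once one exhibits a one-dimensional extension space $\Ext^1(V,W)$ together with the associated short exact sequences. So the first step is, for each case, to write down the relevant pair. For (1), take $V = I_1$, $W = S_2$ (note $S_2 = \Dim$-vector-wise a submodule of $I_2$); for (2), $V = I_l$, $W = S_{l+1}$; for (3), $V = I_{2n-1}$, $W = E_1$; for (4), $V = I_{2n}$, $W = S_2$. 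In every case the middle term $E$ of the nontrivial extension is the next preinjective $I_{l+1}$ (or $\tau^{-1}$ of an earlier one), and the ``coker'' term $D\oplus A$ is the previous preinjective, matching the right-hand sides.

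Next I would verify the hypotheses of Theorem~\ref{m-1} for each pair: namely $\dim_{\End(V)}\Ext^1(V,W) = 1$, and, writing $\theta \in \Hom(W,\tau V)$ for the associated morphism with kernel $D$ and cokernel $\tau A \oplus I$, that $\Hom(A\oplus D, I) = 0 = \Ext^1(A,D)$. This is a finite check that can be read off directly from the shape of the preinjective Auslander–Reiten component displayed above: the relevant $\Hom$ and $\Ext^1$ spaces between preinjectives are controlled by the mesh structure, and the modules $S_l$, $E_1$ are regular simples or mouth modules whose extensions with preinjectives are one-dimensional by standard Auslander–Reiten theory for the tame quiver $\widetilde{A}_{2n-1,1}$. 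In particular one uses that $\Ext^1(I_l, S_{l+1})$ corresponds to an arrow in the AR-quiver, hence is one-dimensional, and that the almost split sequence ending at the appropriate preinjective gives the exact sequences needed.

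Once the formula applies, the only remaining task is to compute the two scalar exponents $\tfrac12\Lambda((\mathbf I - R^{tr})\Dim V,(\mathbf I - R^{tr})\Dim W)$ and $\tfrac12\Lambda(\cdots) + \tfrac12\langle V,W\rangle - \tfrac12\langle A,D\rangle$ and check they equal $\tfrac12$ and $-\tfrac12$ respectively. This is a direct computation using $\mathbf I - R^{tr}$ and the explicit matrix $\Lambda$, together with the Euler form matrix $2(\mathbf I - R^{tr})$; since $\Dim I_l$, $\Dim S_{l+1}$, $\Dim E_1$ are explicit small vectors, the bilinear pairings reduce to reading entries of $\Lambda$, exactly as in the computations in Lemma~\ref{lamda} and Proposition~\ref{delta-2}. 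Finally, to pass from the $m = 0$ case to arbitrary $m \in \mathbb{Z}$, apply the automorphism $\sigma^{m}$ (the shift functor on $\mathcal{C}_Q$, which sends $X_M$ to $X_{M[1]}$ and fixes powers of $q$): since $\sigma$ takes $X_{I_l}$ to $X_{\tau^{-1} I_l}$ up to the established identifications, applying $\sigma^{m}$ to the $m=0$ identity yields the general one.

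The main obstacle I anticipate is not any single deep point but the bookkeeping: one must make the identifications $S_{l+1} = \tau^{?}(\text{something})$ and track which preinjective plays the role of $E$ versus $D\oplus A$ consistently across the four cases and across the $\tau^m$-twist, and one must be careful that the ``$A$ and $V$ have the same maximal projective summand'' condition in Theorem~\ref{m-1} is satisfied (here all modules involved are preinjective, so they have no projective summands and $A = V$ issues are vacuous, but this should be stated). Verifying $\Hom(A \oplus D, I) = 0$ in case (3), where $E_1$ is the non-simple mouth module of the non-homogeneous tube, is the most delicate of the checks and deserves an explicit argument using the structure of that tube.
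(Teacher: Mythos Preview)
Your proposal is correct and follows essentially the same route as the paper's proof: apply Theorem~\ref{m-1} to each pair $(V,W)$ at $m=0$ to obtain the base case, then transport by the automorphism $\sigma^m$. The paper only spells out case~(1) explicitly (with the exact sequences $0 \to S_2 \to I_2 \to I_1 \to 0$ and $0 \to S_2 \to \tau I_1 \to I_{2n} \to 0$, so that in fact $D=A=0$ and the second term is $X_{I_{2n}[-1]}$ rather than an $X_{D\oplus A}$) and declares the remaining cases similar.
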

\begin{proof}
We only prove statement (1). The proof of others is similar.

Note that  $\operatorname{dim}_{\operatorname{End}(I_1)}  \operatorname{Ext}(I_1, S_2)=1$, and the related short exact sequences are
\[ \xymatrix{0 \ar[r]& S_2  \ar[r] & I_2 \ar[r]& I_1 \ar[r]&0}\] and
\[ \xymatrix{0 \ar[r]& S_2  \ar[r] & \tau (I_1) \ar[r]& I_{2n} \ar[r]&0} \]
Then by Theorem \ref{m-1}, we can  get 
 $$X_{I_1} X_{S_2}=q^{ \frac{1}{2}} X_{I_2}+ q^{-\frac{1}{2}} X_{I_{2n}[-1]}.$$
 The statement follows immediately by applying $\sigma^{m}$ on both sides of the above equation.
\end{proof}

Let $r(\mathbf{a})=\sum_{i=1}^{2n}[a_i]_{+}$  for $\mathbf{a}=(a_1, a_2, \dots, a_{2n})\in \mathbb{Z}^{2n}$. Berenstein and Zelevinsky ~\cite{BZ2014} defined a partial order on $\mathbb{Z}^{2n}$ by setting 
$$\mathbf{a}'\prec \mathbf{a}\ \  \text{if and only if}\ \  r(\mathbf{a}') < r(\mathbf{a}).$$

\begin{lemma}\label{step-2}
For any indecomposable rigid object $M$ with $\Dim M=(m_1,m_2, \dots, m_{2n})$, we have that
$$X_M=q^{*} \prod \limits_{i=1}^{2n}X_{S_i}^{[m_i]_{+}}\prod \limits_{i=1}^{2n}X_{P_i[1]}^{[-m_i]_{+}}+\sum_{\mathbf{a}
\prec \Dim M} f_{\mathbf{a}}\prod \limits_{i=1}^{2n}X_{S_i}^{[a_i]_{+}}\prod \limits_{i=1}^{2n}X_{P_i[1]}^{[-a_i]_{+}},$$
where ${*} ={\frac{l}{2}}$ for some $l\in \mathbb{Z}$, $\mathbf{a}=(a_1, a_2, \dots, a_{2n})\in \mathbb{Z}^{2n}$ and $f_{\mathbf{a}} \in \mathbb{Z}[q^{\pm \frac{1}{2}}]$.
\end{lemma}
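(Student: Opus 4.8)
The plan is to express every indecomposable rigid object $M$ of $\mathcal{C}_Q$ in terms of the ``initial'' quantum cluster variables $X_{S_1},\dots,X_{S_{2n}},X_{P_1[1]},\dots,X_{P_{2n}[1]}$ and to track the leading term under the Berenstein--Zelevinsky order $\prec$. Recall that the cluster $\{X_{S_i}\}_{i}$ together with the coefficients $\{X_{P_i[1]}\}_i$ arises from the initial seed by the standard mutation relations, and $X_{S_i}$, $X_{P_i[1]}$ are (quantum) cluster variables/coefficients of a single cluster; by the Laurent phenomenon $X_M$ expands as a $\mathbb{Z}[q^{\pm\frac12}]$-Laurent polynomial in these variables. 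The content of the lemma is the \emph{shape} of that expansion: a unique dominant monomial $\prod_i X_{S_i}^{[m_i]_+}\prod_i X_{P_i[1]}^{[-m_i]_+}$ (with a power-of-$q^{1/2}$ coefficient) maximal with respect to $\prec$, all remaining monomials being strictly smaller.

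First I would set up an induction that simultaneously processes the preprojective, regular (both tubes), and preinjective rigid indecomposables, organized by AR-position, using the multiplication formulas of Theorems~\ref{m-1}, \ref{m-2}, \ref{m-3}, \ref{th:init-comm} and the auxiliary products collected in Lemma~\ref{step-1}. The base cases are the $X_{S_i}$ themselves (for which $\Dim S_i = e_i$, $[m_j]_+ = \delta_{ij}$, and the formula is the tautology $X_{S_i}=X_{S_i}$) and the $X_{I_i[-1]}=X_{P_i[1]}$ (with $\Dim = -e_i$, giving the monomial $X_{P_i[1]}$). For the inductive step, given an indecomposable rigid $M$ not among the base cases, I would realize $X_M$ as a term in a product $X_{M'}X_{M''}$ where $M',M''$ are rigid indecomposables sitting ``closer'' to the slice of simples/shifted projectives — e.g. a short exact sequence $0\to M'\to M\to M''\to 0$ or $0\to M''\to M\to M'\to 0$ in which $\dim_{\End}\Ext^1 = 1$ — so that the relevant multiplication theorem reads $X_{M'}X_{M''}= q^{*}X_M + q^{*}X_{(\text{smaller piece})}$, hence $X_M = q^{*}X_{M'}X_{M''} - q^{*}X_{(\text{smaller piece})}$. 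Applying the induction hypothesis to $X_{M'}$, $X_{M''}$, and the smaller piece, multiplying out the dominant monomials (their product is again of the required form because $[m_i']_+ + [m_i'']_+$ should agree with $[m_i]_+$ on the leading term when $M',M''$ are chosen compatibly with the dimension-vector addition $\Dim M = \Dim M' + \Dim M''$), and checking that all cross terms and the subtracted term are $\prec \Dim M$, yields the claim for $M$.

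The key technical points to verify are: (a) that the quantum coefficients appearing are always of the form $q^{l/2}$, $l\in\mathbb Z$ — this follows from the explicit $\Lambda$-exponents in Theorems~\ref{m-1}--\ref{th:init-comm} since $\Lambda$ is an integer matrix and the Euler form contributes integers, so every exponent lies in $\frac12\mathbb Z$; (b) that the product of two monomials of the prescribed form, say $X^{\mathbf b}X^{\mathbf c}$ with $\mathbf b = \sum[m_i']_+\Dim S_i + \sum[-m_i']_+\Dim P_i[1]$ etc., re-expands with leading monomial exactly $\prod X_{S_i}^{[m_i]_+}\prod X_{P_i[1]}^{[-m_i]_+}$ and lower-order corrections $\prec \Dim M$ — here I would use that $r(\mathbf a)=\sum[a_i]_+$ is subadditive in a controlled way and that the ``non-leading'' monomials of $X_{M'}$, $X_{M''}$ already satisfy $\mathbf a' \prec \Dim M'$, so their contributions, after multiplication, stay strictly below $\Dim M$; (c) handling the four ``special'' neighborhoods in the AR-quiver (the mouth of the non-homogeneous tube, the wrap-around arrows near vertex $1$ and vertex $2n$) via precisely the four relations of Lemma~\ref{step-1}, composed with $\sigma^m$.

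The main obstacle I anticipate is \textbf{(b)}: ensuring that no two distinct monomials in the inductively obtained expansions of $X_{M'}X_{M''}$ and of $q^{*}X_{(\text{smaller piece})}$ collide at the level $\mathbf a = \Dim M$ and thereby threaten the \emph{uniqueness} of the dominant monomial — i.e. showing the leading term genuinely survives with a monomial (power of $q^{1/2}$) coefficient rather than being cancelled or merged. This requires a careful bookkeeping argument that the only way to obtain $r(\mathbf a)=r(\Dim M)$ in the product is by pairing the dominant monomial of $X_{M'}$ with that of $X_{M''}$, together with the verification that $[m_i]_+ = [m_i']_+ + [m_i'']_+$ and $[-m_i]_+ = [-m_i']_+ + [-m_i'']_+$ hold coordinate-wise for the chosen decomposition (which one arranges by picking $M', M''$ so that $\Dim M'$ and $\Dim M''$ have ``disjoint supports'' in the relevant sense, possible because of the linear structure of the AR-quiver of $\widetilde A_{2n-1,1}$). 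Once this is pinned down, the subtracted term and all cross terms drop strictly below $\Dim M$ by the subadditivity of $r$, and the induction closes.
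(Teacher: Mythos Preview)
Your approach is essentially the paper's: both argue by induction along the AR-quiver, using the multiplication formulas (Theorems~\ref{m-1}--\ref{th:init-comm}) and the relations of Lemma~\ref{step-1} to write each $X_M$ as $q^{*}X_{M'}X_{M''}-q^{*}X_{(\text{smaller})}$ and then track the leading standard monomial. The paper treats $E_1$ first via $X_{I_1}X_{P_{2n}}=q^{1/2}X_{E_1}+q^{-1/2}X_{I_2[-1]\oplus I_{2n-1}[-1]}$ (noting $I_1=S_1$, $P_{2n}=S_{2n}$), then the regular rigids $R_{E_i,r}$ by a quasi-length recursion, then the preprojectives and preinjectives via Lemma~\ref{step-1}.

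Two small corrections to your framing. First, the $X_{P_i[1]}=X_i$ are not ``coefficients'' but the \emph{initial} cluster variables (here $\tilde B=B$ is $2n\times 2n$, so there are no frozen vertices); the $X_{S_i}$ are the once-mutated variables at each vertex. The monomials $\prod_i X_{S_i}^{[a_i]_+}\prod_i X_{P_i[1]}^{[-a_i]_+}$ are the \emph{standard monomials} of \cite{BZ2005}, and they do not all lie in a single cluster, so the Laurent phenomenon alone does not give the expansion you want---one really needs the inductive argument.

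Second, and more importantly: the obstacle you flag as (b)---ensuring that the leading monomial of $X_{M'}X_{M''}$ is exactly the standard monomial for $\Dim M'+\Dim M''$ with a pure $q^{l/2}$ coefficient, and that all other terms are strictly $\prec$-smaller---is precisely the content of \cite[Lemma~2.1]{BZ2014}. The paper invokes that lemma as a black box at each inductive step rather than redoing the subadditivity and disjoint-support bookkeeping you sketch. You should cite it too; once you do, your worry about cancellation or collision at the top level disappears, and the induction closes without further work.
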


\begin{proof}
Due to the existence of short exact sequences
\[ \xymatrix{0 \ar[r]& P_{2n}  \ar[r] & E_1 \ar[r]& I_{1} \ar[r]&0}\] and
\[ \xymatrix{0 \ar[r]& P_{2n}   \ar[r] &  \tau I_1 \ar[r]& I_2\oplus I_{2n-1} \ar[r]&0},  \]
we obtain that $X_{I_1} X_{P_{2n}}=q^{\frac{1}{2}} X_{E_1}+ q^{-\frac{1}{2}} X_{I_2[-1]\oplus I_{2n-1}[-1]}$.
Note that $I_1=S_1, P_{2n}=S_{2n}$,  therefore the statement holds for $X_{E_1}.$

For any indecomposable regular rigid object $R_{E_{i},r}$ for $r\geq 1$, consider 
the following short exact sequences
\[ \xymatrix{0 \ar[r]& E_{i+1}  \ar[r] & R_{E_{i+1}, r+1} \ar[r]& R_{E_{i}, r} \ar[r]&0}\] and
\[ \xymatrix{0 \ar[r]& E_{i+1}   \ar[r] & R_{E_{i+1},r} \ar[r]& R_{E_{i},r-1}  \ar[r]&0},  \]
we obtain that 
\begin{align*}
X_{R_{E_{i}, r}} X_{E_{i+1}}= &q^{\frac{1}{2} \Lambda ((\mathbf{I}-R^{tr})\Dim R_{E_{i},r}, (\mathbf{I}-R^{tr})\Dim E_{i+1}) } X_{R_{E_{i+1}, r+1}}\\
&+ q^{\frac{1}{2} \Lambda ((\mathbf{I}-R^{tr})\Dim R_{E_{i},r}, (\mathbf{I}-R^{tr})\Dim E_{i+1}) -1} X_{R_{E_{i-1},r-1}}.
\end{align*}
Combining~\cite[Lemma 2.1]{BZ2014},  the statement holds for  $X_{R_{E_{i},r}}.$

By Lemma~\ref{step-1} and~\cite[Lemma 2.1]{BZ2014}, the statement holds for  all $X_M$ where $M$ is any indecomposable preprojective or preinjective representation. The proof is completed.
\end{proof}
Lemma \ref{step-2} implies that the quantum cluster algebra $\mathcal{A}_q(Q)$ is equal to its lower bound which is a particular case in \cite{BZ2005}.
\begin{proposition}
The set $\mathcal{B}$ is a $\mathbb{Z}[\displaystyle q^{\pm\frac{1}{2}}]$-basis of $\mathcal{A}_q(Q)$.
\end{proposition}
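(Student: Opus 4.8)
The plan is to prove that $\mathcal{B}$ is a $\mathbb{Z}[q^{\pm\frac{1}{2}}]$-basis in two stages: first show that $\mathcal{B}$ spans $\mathcal{A}_q(Q)$ over $\mathbb{Z}[q^{\pm\frac{1}{2}}]$, then show that $\mathcal{B}$ is linearly independent. For the spanning part, the key input is Lemma~\ref{step-2}, which says every $X_M$ (and hence, by multiplying, every product of such elements) lies in the $\mathbb{Z}[q^{\pm\frac{1}{2}}]$-span of the ``standard monomials'' $\prod_i X_{S_i}^{[a_i]_+}\prod_i X_{P_i[1]}^{[-a_i]_+}$; since $\mathcal{A}_q(Q)$ equals its lower bound (the remark after Lemma~\ref{step-2}), these standard monomials span $\mathcal{A}_q(Q)$. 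So I would run a triangularity argument: order $\mathbb{Z}^{2n}$ by the partial order $\prec$ of Berenstein--Zelevinsky, and use Lemma~\ref{step-2} together with the multiplication formulas (Theorems~\ref{m-1}, \ref{m-2}, \ref{m-3}) and the linear relations (Lemma~\ref{induc1}, Lemma~\ref{induc11}, Theorem~\ref{mainresult1}) to express each standard monomial as a $\mathbb{Z}[q^{\pm\frac{1}{2}}]$-combination of elements of $\mathcal{B}$, inducting on $r(\mathbf{a})$. Concretely one checks that the standard monomials of ``low complexity'' are themselves cluster monomials or products $F_m(X_\delta)X_E$, and then that any standard monomial containing a product of cluster variables from non-compatible objects can be rewritten, via the multiplication formulas, as a leading term in $\mathcal{B}$ plus strictly $\prec$-smaller standard monomials.

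For linear independence, I would pass to the classical limit. Each element $b\in\mathcal{B}$ is bar-invariant (cluster monomials are bar-invariant by construction; the elements $F_m(X_\delta)X_E$ are bar-invariant by Proposition~\ref{bar-inv} together with the fact that $F_m(X_\delta)$ is a polynomial in the bar-invariant element $X_\delta$). By Lemma~\ref{step-2} each $b$ has, in the standard-monomial expansion, a distinguished leading term $q^{l/2}\prod_i X_{S_i}^{[a_i]_+}\prod_i X_{P_i[1]}^{[-a_i]_+}$ with $\mathbf{a}=\Dim M$ (or the appropriate dimension vector for $F_m(X_\delta)X_E$), all lower terms being $\prec$-smaller; the leading exponent vectors of distinct elements of $\mathcal{B}$ are distinct because the map $M\mapsto\Dim M$ on rigid objects, resp. $(m,E)\mapsto$ the relevant dimension vector, is injective (this is essentially Theorem~\ref{bijection} for the cluster-monomial part, and a direct check using the tube combinatorics for the $F_m(X_\delta)X_E$ part). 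Hence $\mathcal{B}$ is ``unitriangular'' with respect to the standard monomials ordered by $\prec$, and any nontrivial $\mathbb{Z}[q^{\pm\frac{1}{2}}]$-linear dependence would force the maximal leading term to cancel, a contradiction. Alternatively, and perhaps more cleanly, one specializes $q\to 1$: the images of $\mathcal{B}$ under $q=1$ form the classical atomic basis of $\mathcal{A}(Q)$ by \cite{DTh}, so they are $\mathbb{Z}$-linearly independent, which forces $\mathcal{B}$ itself to be $\mathbb{Z}[q^{\pm\frac{1}{2}}]$-linearly independent (a dependence would survive specialization after clearing denominators in $q$, since the coefficients live in an integral domain).

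I expect the main obstacle to be the \emph{spanning} step — specifically, verifying that every standard monomial can be rewritten in terms of $\mathcal{B}$. The cluster-monomial part of $\mathcal{B}$ only covers products $X^{\mathbf{e}}$ of \emph{mutually compatible} rigid objects (i.e. genuine cluster monomials), whereas arbitrary standard monomials involve products of $X_{S_i}$ and $X_{P_i[1]}$ with no compatibility constraint. One must therefore show that when non-compatibility occurs — typically a factor $X_{S_i}X_{P_i[1]}$ or a product straddling the non-homogeneous tube — the multiplication formulas and the $F_m(X_\delta)$-relations allow reduction to elements of $\mathcal{B}$ modulo $\prec$-lower terms. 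This requires a careful bookkeeping of which pairs fail to be compatible and exhibiting, in each such case, the explicit exchange relation (a quantum Ptolemy-type identity of the form $X_aX_b = q^{*}X_c + q^{*}X_d$ coming from Theorems~\ref{m-1}--\ref{m-2}) whose leading term has $\prec$-strictly-larger $r$-value; the powers of $q$ must be tracked to stay within $\mathbb{Z}[q^{\pm\frac{1}{2}}]$, which is where Lemma~\ref{lamda} and the explicit form of $\Lambda$ are used. Once that reduction is in place, a straightforward induction on $\prec$ finishes the spanning claim, and combined with linear independence we conclude that $\mathcal{B}$ is a $\mathbb{Z}[q^{\pm\frac{1}{2}}]$-basis.
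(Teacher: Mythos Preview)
Your linear-independence argument via unitriangularity is essentially what the paper does. The difference is in the spanning step, where you make life harder than necessary.

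The paper does \emph{not} attempt to rewrite an arbitrary standard monomial as a $\mathbb{Z}[q^{\pm\frac{1}{2}}]$-combination of elements of $\mathcal{B}$ by repeatedly applying the exchange formulas of Theorems~\ref{m-1}--\ref{m-2}. Instead, it observes that the very triangularity you already established for independence also gives spanning, once you note one additional fact: the ``leading exponent'' map
\[
\mathcal{B}\longrightarrow \mathbb{Z}^{2n},\qquad X_M\mapsto \Dim M,\quad F_m(X_\delta)X_E\mapsto m\delta+\Dim E,
\]
is not merely injective but a \emph{bijection}. Indeed
\[
\mathbb{Z}^{2n}=\{\Dim M : M \text{ rigid in } \mathcal{C}_Q\}\ \cup\ \{m\delta+\Dim E : m\geq 1,\ E \text{ regular rigid}\},
\]
which is a standard fact for tame hereditary types. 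Combined with Lemma~\ref{step-2} (and its straightforward extension to $F_m(X_\delta)X_E$, using Proposition~\ref{delta-2}), this says that the transition matrix from $\mathcal{B}$ to the standard monomial basis, indexed by $(\mathbb{Z}^{2n},\prec)$, is upper-triangular with diagonal entries of the form $q^{l/2}$ --- units in $\mathbb{Z}[q^{\pm\frac{1}{2}}]$. Since every $\prec$-chain below a given $\mathbf{a}$ has bounded length (the Berenstein--Zelevinsky condition, \cite{BZ2005}), this triangular system is invertible over $\mathbb{Z}[q^{\pm\frac{1}{2}}]$, and spanning and independence fall out simultaneously.

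So the ``main obstacle'' you flag --- the laborious case-by-case reduction of non-compatible products via Ptolemy-type identities --- never arises. Your proposed route would work in principle, but the surjectivity of the leading-exponent map onto $\mathbb{Z}^{2n}$ is the missing ingredient that collapses it to a one-line triangularity argument.
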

\begin{proof}
Let  $M$ be a rigid object in $\mathcal{C}_Q$  with $\Dim M=(m_1,m_2, \dots, m_{2n})$, according to Lemma \ref{step-2}, it is easy to deduce  that 
$$X_M=q^{\star} \prod \limits_{i=1}^{2n}X_{S_i}^{[m_i]_{+}}\prod \limits_{i=1}^{2n}X_{P_i[1]}^{[-m_i]_{+}}+\sum_{\mathbf{a}
\prec \Dim M} f_{\mathbf{a}}\prod \limits_{i=1}^{2n}X_{S_i}^{[a_i]_{+}}\prod \limits_{i=1}^{2n}X_{P_i[1]}^{[-a_i]_{+}},$$
where ${\star} ={\frac{n_1}{2}}$   for some $n_1\in \mathbb{Z}$, $\mathbf{a}=(a_1, a_2, \dots, a_{2n})\in \mathbb{Z}^{2n}$ and $f_{\mathbf{a}}\in \mathbb{Z}[q^{\pm \frac{1}{2}}]$.

Let $E$ be a regular rigid object in $\mathcal{C}_Q$  and  denote by $m\delta+\Dim E=(l_1, l_2, \dots, l_{2n})$ for $m\geq 1$. Note that the first term of the integer coefficient polynomials $F_{m}(x)$  is $x^{m}$.  Then by Lemma \ref{step-2} and Proposition \ref{delta-2}, we have
 $$F_{m}(X_\delta) X_E=q^{\#} \prod \limits_{i=1}^{2n}X_{S_i}^{[l_i]_{+}}\prod \limits_{i=1}^{2n}X_{P_i[1]}^{[-l_i]_{+}}+\sum_{\mathbf{b}
\prec (l_1, l_2, \dots, l_{2n})} f_{\mathbf{b}}\prod \limits_{i=1}^{2n}X_{S_i}^{[b_i]_{+}}\prod \limits_{i=1}^{2n}X_{P_i[1]}^{[-b_i]_{+}},$$
where   ${\#} ={\frac{n_2}{2}}$  for some $n_2\in \mathbb{Z}$, $\mathbf{b}=(b_1, b_2, \dots, b_{2n})\in \mathbb{Z}^{2n}$ and
$f_{\mathbf{b}}\in \mathbb{Z}[q^{\pm \frac{1}{2}}]$.

Note that $$\mathbb{Z}^{2n}=\{\Dim M, m\delta+\Dim E|M \text{ is rigid in } \mathcal{C}_Q, m \geq 1, E \text{ is regular rigid in } \mathcal{C}_Q\},$$
and the standard monomial basis $\{\prod \limits_{i=1}^{2n}X_{S_i}^{[a_i]_{+}}\prod \limits_{i=1}^{2n}X_{P_i[1]}^{[-a_i]_{+}}|\mathbf{a}=(a_1, a_2, \dots, a_{2n})\in \mathbb{Z}^{2n}\}$ indexed by $(\mathbb{Z}^{2n}, \prec)$ satisfies the property that for any $\mathbf{a}\in \mathbb{Z}^{2n}$, the lengths of chains in $\mathbb{Z}^{2n}$ with top  element $\mathbf{a}$ are bounded from above \cite{BZ2005}. Therefore it follows that $\mathcal{B}$ is a $\mathbb{Z}[\displaystyle q^{\pm\frac{1}{2}}]$-basis of $\mathcal{A}_q(Q)$.
\end{proof}

\begin{proposition}\label{positive}
Every element in the set $\mathcal{B}$ is positive.
\end{proposition}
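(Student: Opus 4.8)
The plan is to establish positivity for the two families of elements constituting $\mathcal{B}$ separately, relying on the known positivity of quantum cluster variables (from \cite{Davison,KQ}) together with the multiplication formulas of Section~2 and the linear relations of Section~3. First I would note that every $X_M$ for $M$ rigid in $\mathcal{C}_Q$ is a quantum cluster monomial by Theorem~\ref{bijection}, and quantum cluster monomials are universally positive: their Laurent expansion in any cluster has coefficients in $\mathbb{Z}_{\geq 0}[q^{\pm\frac{1}{2}}]$. This disposes of the first half of $\mathcal{B}$ immediately, modulo citing the positivity theorem.

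The substantive work is to show that each $F_m(X_\delta)X_E$ with $m\geq 1$ and $E$ regular rigid is positive. Here I would argue by a double induction, first on $m$ and then making use of the explicit structure of $E$ as a direct sum of regular rigids supported on a single tube. For the base cases, Theorem~\ref{mainresult1} expresses $F_m(X_\delta)X_{I_i[-1]}$ (for each $i$) as a $q$-power times a sum of two quantum cluster variables $X_{M_i(m-1)}$ and $X_{N_i(m-1)}$ (with the boundary conventions), each of which is positive by the cited result; since the coefficients $q^{\pm m/2}$ lie in $\mathbb{Z}_{\geq 0}[q^{\pm\frac{1}{2}}]$, the expansion of $F_m(X_\delta)X_{I_i[-1]}$ in any cluster is a nonnegative combination. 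To handle a general regular rigid $E$ rather than just the $X_{I_i[-1]}$, I would use that $E$ decomposes along a tube and peel off regular simples one at a time: using Theorem~\ref{m-1} (or Lemma~\ref{step-1} in the relevant cases), multiplying $F_m(X_\delta)X_{E'}$ by the appropriate $X_{E_i}$ produces, up to a positive $q$-power, a sum of two terms of the form $F_m(X_\delta)X_{E''}$ with $E''$ again regular rigid of smaller total quasi-length; by induction these are positive, and the products on the left are positive as products of positive elements, so the pieces $F_m(X_\delta)X_{E''}$ — being obtained by subtracting one nonnegative-coefficient Laurent polynomial from another that is a product — need a separation-of-terms argument.

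The main obstacle, therefore, is precisely this last point: positivity is not obviously preserved under the subtractions that occur when one isolates $F_m(X_\delta)X_{E''}$ from a relation of the form (positive)$\;\cdot\;$(positive) $= q^{a}F_m(X_\delta)X_{E''} + q^{b}F_m(X_\delta)X_{E'''}$. The way I would resolve this is to exploit the \emph{leading-term} description from Lemma~\ref{step-2} and its proof, which pins down, for each of the relevant elements, a distinguished standard monomial $\prod X_{S_i}^{[a_i]_+}\prod X_{P_i[1]}^{[-a_i]_+}$ indexed by its $\mathbf{g}$-vector together with the partial order $\prec$ of Berenstein–Zelevinsky; the two summands $F_m(X_\delta)X_{E''}$ and $F_m(X_\delta)X_{E'''}$ have distinct, $\prec$-incomparable leading data, so the decomposition of the positive product into these two pieces is forced term-by-term in the standard monomial basis, and each piece inherits nonnegativity of coefficients. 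Concretely, I would: (i) invoke positivity of quantum cluster variables for all $X_{M_i(l)}$, $X_{N_i(l)}$; (ii) run the induction on $m$ built on Theorem~\ref{mainresult1} and the recursion $F_{m+1}=F_m X_\delta - F_{m-1}$, checking at each step that the $q$-powers produced are honest monomials in $q^{\pm\frac{1}{2}}$ with coefficient $1$; (iii) run the inner induction on the quasi-length of $E$ using the tube exact sequences and Theorem~\ref{m-3}/Theorem~\ref{m-1}, using the leading-term bookkeeping to split off the desired summand with nonnegative coefficients; and (iv) conclude that every element of $\mathcal{B}$ lies in $\mathbb{Z}_{\geq 0}[q^{\pm\frac{1}{2}}]$ for the cluster expansion in every cluster, hence is positive.
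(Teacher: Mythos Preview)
Your proposal has a genuine gap in the ``separation of terms'' step. You correctly identify the obstacle: from a relation of the form $(\text{positive})\cdot(\text{positive}) = q^a A + q^b B$ one cannot in general conclude that $A$ and $B$ are separately positive. Your proposed fix, however, does not close this gap. Knowing that $A$ and $B$ have distinct, $\prec$-incomparable \emph{leading} terms in the standard monomial basis says nothing about the full supports of their Laurent expansions in an arbitrary cluster; lower-order terms may overlap, and positivity is a statement about every cluster, not just the initial one. So the inner induction on the quasi-length of $E$ is not justified as written. (There is also a smaller confusion: $I_i[-1]$ is not regular rigid, so it is not a ``base case'' for the induction on $E$; the identity $X_{I_i[-1]}=X_i$ shows rather that your observation from Theorem~\ref{mainresult1} controls $F_m(X_\delta)$ only in the \emph{initial} cluster.)

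The paper avoids all of this by a much shorter route. Since $E$ is regular rigid, $X_E$ is a quantum cluster monomial and hence positive; the product of two positive elements is again positive (in each cluster the quasi-commutation factor $q^{\Lambda(\cdot,\cdot)/2}$ is a monomial with coefficient $1$). So it suffices to show that $F_m(X_\delta)$ itself is positive, and no induction on $E$ is needed. For this, fix an arbitrary cluster $\{X_{T_1},\dots,X_{T_{2n}}\}$. Some $T_i$ is transjective (a cluster tilting object cannot lie entirely in the rank-$(2n{-}1)$ tube), and applying a suitable power of the automorphism $\sigma$, which fixes $F_m(X_\delta)$ by Proposition~\ref{delta-4}, to an identity of Theorem~\ref{mainresult1} gives
\[
F_m(X_\delta)\,X_{T_i}=q^{l_1/2}X_{L}+q^{l_2/2}X_{L'}
\]
with $X_L,X_{L'}$ cluster variables. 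The right-hand side is positive, so $F_m(X_\delta)\,X_{T_i}$ has nonnegative expansion in the chosen cluster; since $X_{T_i}$ is one of the variables of that very cluster, dividing by it preserves nonnegativity, and $F_m(X_\delta)$ is positive there. You had the right ingredient (Theorem~\ref{mainresult1} plus positivity of cluster variables) but used it only at the initial cluster; the missing idea is to transport the relation to every cluster via $\sigma$.
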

\begin{proof}
We only need to prove that $F_{m}(X_\delta)$ is positive. For any cluster $\{X_{T_1}, X_{T_2}, \dots, X_{T_{2n}}\}$, 
there exists some  $1 \leq i \leq 2n$, such that
by  applying the automorphism $\sigma$  to the equations in Theorem~\ref{mainresult1}, we have 
$F_{m}(X_\delta)X_{T_i}=q^{\frac{l_1}{2}}X_{L_i}+q^{\frac{l_1}{2}}X_{L_i'}$  for some $l_1,l_2\in \mathbb{Z}$,
 and $X_{L_i}$ and $X_{L_i'}$ are certain cluster variables. Due to the positivity of quantum cluster variables~\cite{Davison,KQ},  we know that both  $X_{L_i}$ and $X_{L_i'}$ belong to 
$\mathbb{Z}_{\geq 0}[q^{\pm \frac{1}{2}}][X_{T_1}, X_{T_2}, \dots, X_{T_{2n}}]$ which implies that $F_{m}(X_\delta) \in \mathbb{Z}_{\geq 0}[q^{\pm \frac{1}{2}}][X_{T_1}, X_{T_2}, \dots, X_{T_{2n}}]$.
\end{proof}

\begin{lemma}\label{object} We have the following:
\begin{enumerate}
\item For any rigid object $M$, there exists some cluster tilting object $T=T_{1} \oplus T_{2} \cdots \oplus T_{2n}$ with  $M$ as a direct summand of $lT$ for some $l\in \mathbb{Z}_{\geq 1}$ such that $X_M$ does not appear 
in the $\{X_{T_{1}}, X_{T_{2}}, \dots, X_{T_{2n}}\}$-expansion of any other basis element of $\mathcal{B}$; 
\item Let $E$ be a regular rigid object and $m\in \mathbb{Z}_{\geq 1}$, then there exist infinitely many cluster tilting 
objects $T^{(r)}=T^{(r)}_{1} \oplus T^{(r)}_{2} \cdots \oplus T^{(r)}_{2n}$ for $r\in \mathbb{Z}$ such that
we can choose a laurent monomial $Y_{{m,E,r}}$ in the $\{X_{T^{(r)}_{1} }, X_{T^{(r)}_{2} }, \dots, X_{T^{(r)}_{2n} }\}$-expansion of  $F_{m}(X_\delta)X_E$ with coefficients  $q^{\frac{l}{2}}$ for certain $l\in \mathbb{Z}$. 
Moreover, for any other basis element  $b$, if $r$ is sufficiently large, then $Y_{{m,E,r}}$ does not appear in the $\{X_{T^{(r)}_{1} }, X_{T^{(r)}_{2} }, \dots, X_{T^{(r)}_{2n} }\}$-expansion of $b$.
\end{enumerate}
\end{lemma}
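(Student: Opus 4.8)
The purpose of this lemma is to equip every $b\in\mathcal{B}$ with a cluster and a Laurent monomial occurring in the $b$-expansion with a \emph{unit} coefficient $q^{l/2}$, and occurring in no competing basis element; reading off that coefficient then converts the positivity of $\mathcal{B}$ (Proposition~\ref{positive}) into the atomic basis property. I treat the two parts separately.

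For (1), write $M=\bigoplus_i M_i^{\oplus a_i}$ with the $M_i$ pairwise non-isomorphic indecomposable; rigidity of $M$ gives $\Ext^1(M_i,M_j)=0$ for all $i,j$, so $\bigoplus_i M_i$ is a partial cluster tilting object and extends to a cluster tilting object $T=T_1\oplus\cdots\oplus T_{2n}$ with every $M_i$ among the $T_j$. Putting $l=\max_i a_i$ makes $M$ a direct summand of $lT$, and iterating Theorem~\ref{m-3} over the pairwise $\Ext^1$-orthogonal summands of $M$ gives $X_M=q^{*}Y$ with $Y=\prod_{j=1}^{2n}X_{T_j}^{c_j}$, all $c_j\in\mathbb{Z}_{\geq0}$, $c_j=0$ whenever $T_j\notin\operatorname{add}\,M$, and $*\in\tfrac12\mathbb{Z}$. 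So in the seed built from $T$ the element $X_M$ is, up to the unit $q^{*}$, the single Laurent monomial $Y$, all of whose exponents are $\geq0$. It remains to see that $Y$ is absent from the $\{X_{T_j}\}$-expansion of every other $b\in\mathcal{B}$. Since the whole construction is symmetric under change of initial seed, the analogues of Lemmas~\ref{step-1}--\ref{step-2} and of the basis argument hold with $T$ as initial data; in particular the only basis elements whose $T$-expansion is a single Laurent monomial are the cluster monomials $X_{M'}$ with $M'\mid l'T$, and these are pairwise distinct monomials, so among them $Y$ is produced only by $M'=M$. For every other $b$ --- a cluster monomial $X_{M'}$ with $M'$ not compatible with $T$, or an element $F_m(X_\delta)X_E$ --- one shows that each Laurent monomial occurring in its $T$-expansion carries a strictly negative exponent on some $X_{T_j}$ with $T_j\notin\operatorname{add}\,M$, hence differs from $Y$: for incompatible cluster monomials this is the positivity (indeed $\geq1$) of the relevant denominator-vector coordinates, and for the imaginary elements it follows by analysing a suitable $\sigma$-twist of the multiplication formulas of Theorem~\ref{mainresult1}, together with Theorem~\ref{linear} and Proposition~\ref{delta-4}. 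If one choice of $T$ fails to separate all competitors at once, one varies $T$ among the cluster tilting objects through $\{M_i\}$.

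For (2), the idea is to transport a marker of $F_m(X_\delta)X_E$, via the shift automorphism $\sigma$, out of reach of any fixed competitor. Fix a regular rigid object $E$ and $m\geq1$. A regular rigid object lies in the unique non-homogeneous tube, so $E$ is partial cluster tilting and extends to a cluster tilting object $T^{(0)}$ containing the summands of $E$; moreover $T^{(0)}$ must have a preprojective or preinjective summand. Set $T^{(r)}:=\sigma^{r}(T^{(0)})$; since $\sigma$ acts on objects as the shift $[1]\cong\tau$, which has infinite order on preprojective and preinjective indecomposables, the $T^{(r)}$ ($r\in\mathbb{Z}$) are infinitely many distinct cluster tilting objects. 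In the seed $T^{(0)}$, Theorem~\ref{m-3} writes $X_E$ as a monomial in the $X_{T^{(0)}_j}$; combining this with $\sigma(F_m(X_\delta))=F_m(X_\delta)$ (Proposition~\ref{delta-4} and the recursion for $F_m$) and with Theorem~\ref{mainresult1}, and then applying $\sigma^{r}$, one singles out in the $T^{(r)}$-expansion of $F_m(X_\delta)X_E$ a distinguished Laurent monomial $Y_{m,E,r}$ --- the one produced by the $M_i(\cdot)$- or $N_i(\cdot)$-term of the relevant multiplication formula --- with coefficient a unit $q^{l/2}$, positivity ruling out cancellation at this extreme term. Since $E$ is regular, $\sigma^{-r}(F_m(X_\delta)X_E)=F_m(X_\delta)X_{\tau^{-r}E}$ takes only finitely many values as $r$ varies, so the exponent vector of $Y_{m,E,r}$ read in $T^{(r)}$-coordinates stays inside a fixed bounded region of $\mathbb{Z}^{2n}$. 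Now fix $b\neq F_m(X_\delta)X_E$ in $\mathcal{B}$: the support of its $T^{(r)}$-expansion, as a set of exponent vectors, equals that of the $T^{(0)}$-expansion of $\sigma^{-r}(b)$. If $b$ has a preprojective or preinjective summand, then $\sigma^{-r}(b)$ has $T^{(0)}$-support leaving every bounded region as $r\to\infty$, so $Y_{m,E,r}$ lies outside it once $r\gg0$. If $b$ is supported entirely in the non-homogeneous tube (a regular rigid $X_{M'}$, or an imaginary $F_{m'}(X_\delta)X_{E'}$), then $\sigma^{-r}(b)$ ranges over a finite set, and it suffices to separate the monomials $Y_{m,E,r}$ from these finitely many elements once and for all in the seed $T^{(0)}$ --- an $r$-independent statement of the same nature as the last step of (1).

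The main obstacle in both parts is the separation step: ruling out that the chosen marker monomial is reproduced, possibly through cancellation, inside the Laurent expansion of some other basis element. In (1) this means controlling which Laurent monomials can appear in an arbitrary cluster monomial or imaginary element relative to $T$; in (2) it is the quantitative claim that the marker $Y_{m,E,r}$ escapes the unboundedly moving support of each fixed preprojective or preinjective competitor for $r\gg0$. I expect both to be reduced to the $g$-vector and denominator-vector combinatorics of $\mathcal{C}_Q$, using the explicit description of the rigid objects, the tubes and the preprojective/preinjective components in type $\widetilde{A}_{2n-1,1}$ together with the multiplication formulas of Section~3; the rank-two prototype is the argument for the quantum Kronecker algebra in~\cite{dx}.
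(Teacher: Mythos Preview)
Your approach is genuinely different from the paper's, and considerably harder. You attempt to carry out the separation argument directly in the quantum setting: constructing $T$ (resp.\ $T^{(r)}$) by hand, identifying the marker monomial via Theorem~\ref{m-3} and the multiplication formulas of Section~3, and then ruling out its appearance in every other basis element via denominator-vector and support analysis. You correctly identify this last step as the obstacle, and leave it open.

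The paper sidesteps all of this with a one-line reduction to the classical case. The key observation is that, by Proposition~\ref{positive}, every $b\in\mathcal{B}$ has a Laurent expansion in any cluster with coefficients in $\mathbb{Z}_{\geq 0}[q^{\pm\frac12}]$. Hence specializing $q\mapsto 1$ never annihilates a term: if $\lambda_{\mathbf a}(q^{\pm\frac12})\neq 0$ then $\lambda_{\mathbf a}(1)\geq 1$. In particular, the \emph{support} (the set of Laurent monomials occurring) of the quantum expansion of $b$ is contained in that of the classical expansion of $b|_{q=1}$. Now the classical analogue of the lemma is already proved for type $\widetilde{A}_{p,q}$ in~\cite{DTh}; one simply imports from there the choice of $T$ (resp.\ $T^{(r)}$ and the marker monomial). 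If the marker appeared in the quantum expansion of some other $b'\in\mathcal{B}$, it would survive to $q=1$ and appear in $b'|_{q=1}$, contradicting~\cite{DTh}. For part~(2), the fact that the coefficient is a unit $q^{l/2}$ also comes for free: a nonzero element of $\mathbb{Z}_{\geq 0}[q^{\pm\frac12}]$ that evaluates to $1$ at $q=1$ must be a single power $q^{l/2}$.

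So the ``main obstacle'' you isolate is exactly what the paper avoids. Your direct strategy could in principle be completed, and would have the merit of being self-contained rather than outsourcing the combinatorics to~\cite{DTh}; but as written the separation steps are sketches, not proofs, and the positivity-plus-specialization trick makes them unnecessary.
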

\begin{proof}
Note that the statements (1) and (2) hold for classical cluster algebras of type $\widetilde{A}_{p,q}$~\cite{DTh}.
For any cluster $\{X_{T_1}, X_{T_2}, \dots, X_{T_{2n}}\}$ and any basis element $b$, by Proposition~\ref{positive}, we have that
$$b=\sum_{\mathbf{a}} \lambda_{\mathbf{a}}(q^{\pm \frac{1}{2}})X_{T_1}^{a_1}  X_{T_2}^{a_2} \cdots X_{T_{2n}}^{a_{2n}}$$ where $\mathbf{a}=(a_1, a_2, \dots, a_{2n})\in \mathbb{Z}^{2n}$ and $\lambda_{\mathbf{a}} \in \mathbb{Z}_{\geq 0}[q^{\pm \frac{1}{2}}]$. Setting $q=1$, we have $\lambda_{\mathbf{a}}(1) \in \mathbb{Z}_{\geq 1}$ if $\lambda_{\mathbf{a}} (q^{\pm \frac{1}{2}}) \neq 0$. This implies that every term $X_{T_1}^{a_1}  X_{T_2}^{a_2} \cdots X_{T_{2n}}^{a_{2n}}|_{q=1}$ appears in the expansion of
the basis element $b|_{q=1}$ of classical cluster algebra. 

We only prove statement (1). Similar arguments apply to (2). We choose the same cluster tilting object $T$ as in the corresponding classical cluster algebra~\cite{DTh}. If $X_M$ appears in the $\{X_{T_{1}}, X_{T_{2}}, \dots, X_{T_{2n}}\}$-expansion of some other basis element $b'$ of 
$\mathcal{B}$, then by the above 
discussions, $X_M|_{q=1}$ appears
in the $\{X_{T_{1}}, X_{T_{2}}, \dots, X_{T_{2n}}\}|_{q=1}$-expansion of the atomic basis element $b'|_{q=1}$ in the classical case, which is a contradiction. Thus the statement (1) follows immediately.
\end{proof}

\begin{theorem}\label{mainresult2}
The set $\mathcal{B}$
is a bar-invariant atomic basis of the quantum cluster algebra $\mathcal{A}_q(Q)$.
\end{theorem}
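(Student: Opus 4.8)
The plan is to verify the three defining properties of an atomic basis in turn: $\mathcal{B}$ is a $\mathbb{Z}[q^{\pm\frac12}]$-basis, it is bar-invariant, it consists of positive elements, and every positive element of $\mathcal{A}_q(Q)$ is a $\mathbb{Z}_{\geq 0}[q^{\pm\frac12}]$-combination of elements of $\mathcal{B}$. The first three are already in hand: the basis property was established in the preceding Proposition, bar-invariance of the elements $X_M$ for $M$ rigid is standard (quantum cluster monomials are bar-invariant, together with Theorem~\ref{th:init-comm}), bar-invariance of $F_m(X_\delta)X_E$ follows from Propositions~\ref{delta-1} and~\ref{bar-inv} (note $F_m$ has integer coefficients and $X_\delta$ is bar-invariant), and positivity of every element of $\mathcal{B}$ is exactly Proposition~\ref{positive}. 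So the real content is the last property.

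To prove the atomic (canonical positivity) property, I would take an arbitrary positive element $Y \in \mathcal{A}_q(Q)$ and write it in the basis $\mathcal{B}$ as $Y = \sum_{b \in \mathcal{B}} c_b(q^{\pm\frac12})\, b$ with $c_b \in \mathbb{Z}[q^{\pm\frac12}]$, and show each $c_b \in \mathbb{Z}_{\geq 0}[q^{\pm\frac12}]$. The engine here is Lemma~\ref{object}. First I would isolate the coefficient of a fixed $b_0 = X_M$ (for $M$ rigid): choose the cluster tilting object $T = T_1 \oplus \cdots \oplus T_{2n}$ provided by Lemma~\ref{object}(1), so that the Laurent monomial $X_M$, when expanded in $\{X_{T_1},\dots,X_{T_{2n}}\}$, does not occur in the expansion of any other basis element. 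Then in the $T$-expansion of $Y$, the coefficient of that particular Laurent monomial equals $c_{b_0}$ times a monomial factor $q^{\ell/2}$; since $Y$ is positive, this coefficient lies in $\mathbb{Z}_{\geq 0}[q^{\pm\frac12}]$, forcing $c_{b_0} \in \mathbb{Z}_{\geq 0}[q^{\pm\frac12}]$. For $b_0 = F_m(X_\delta)X_E$ I would instead use Lemma~\ref{object}(2): pick $r$ large enough that the distinguished Laurent monomial $Y_{m,E,r}$ in the $T^{(r)}$-expansion of $F_m(X_\delta)X_E$ appears in no other basis element's expansion, and run the same argument with $T^{(r)}$ in place of $T$.

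The one subtlety — and the step I expect to be the main obstacle — is making precise the claim that the distinguished Laurent monomial appears in $b_0$ "with a unit coefficient $q^{\ell/2}$ and in no other $b$" at the quantum level. Lemma~\ref{object} gives the separation statement by specializing $q=1$ and invoking the classical result of \cite{DTh}; I would need to check carefully that the leading/distinguished term of $X_M$ (resp. of $F_m(X_\delta)X_E$) in the chosen $T$-expansion survives specialization with a nonzero (hence, by positivity, strictly positive integer) coefficient, so that the classical non-appearance statement genuinely transfers back to say the $q$-coefficient of that monomial in every other $b$ is zero. Positivity of $Y$ over $\mathbb{Z}_{\geq 0}[q^{\pm\frac12}]$ is what rules out cancellation: if $c_{b_0}$ had a negative (or genuinely Laurent-indefinite) coefficient, the $T$- (resp. $T^{(r)}$-) expansion of $Y$ would have a coefficient outside $\mathbb{Z}_{\geq 0}[q^{\pm\frac12}]$ at that monomial, contradicting positivity. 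Once each $c_b \in \mathbb{Z}_{\geq 0}[q^{\pm\frac12}]$, we are done: combined with the already-established facts that $\mathcal{B}$ is a bar-invariant basis of positive elements, this shows $\mathcal{A}_q(Q)^+ = \bigoplus_{b\in\mathcal{B}} \mathbb{Z}_{\geq 0}[q^{\pm\frac12}]\, b$, i.e.\ $\mathcal{B}$ is a bar-invariant atomic basis.
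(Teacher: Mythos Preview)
Your proposal is correct and follows essentially the same route as the paper's proof: bar-invariance via Proposition~\ref{bar-inv}, positivity via Proposition~\ref{positive}, and the atomic property by isolating each coefficient using the separating Laurent monomials supplied by Lemma~\ref{object}(1) (for $X_M$) and Lemma~\ref{object}(2) (for $F_m(X_\delta)X_E$, with $r$ chosen large enough for the finitely many basis elements appearing in $Y$). The ``unit coefficient'' subtlety you flag is exactly what the paper handles: for $X_M$ the monomial is $X_M$ itself (a cluster monomial in the $T$-cluster, coefficient $1$), and for $F_m(X_\delta)X_E$ the coefficient $q^{l/2}$ is built into the statement of Lemma~\ref{object}(2).
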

\begin{proof}
In order to prove $\mathcal{B}$ is bar-invariant, we only need to prove that the elements in  $\{F_{m}(X_\delta) X_E | m \geq 1, E \text{ is a regular rigid object in } \mathcal{C}_Q \}$ are bar-invariant which immediately follows from Proposition \ref{bar-inv} and the definition of the $m$-th Chebyshev polynomials of the first kind $F_m(x), m\geq 0$. 

Let $Y$ be a positive element in $\mathcal{A}_q(Q)$, then
$$Y=\sum_{M \text{ rigid}} \lambda_M X_M+ \sum_{\tiny\begin{array}{c} m  \in \mathbb{Z}_{\geq 1}, \\ E \text{ regular rigid} \end{array}} \lambda_{m, E} F_{m}(X_\delta) X_E$$
where $\lambda_M,  \lambda_{m, E} \in \mathbb{Z}[\displaystyle q^{\pm \frac{1}{2}}]$. 

By Lemma~\ref{object}(1), 
we can find some cluster tilting object $T=T_{1} \oplus T_{2} \cdots \oplus T_{2n}$ with  $M$ as a direct summand of $lT$ for some $l\in \mathbb{Z}_{\geq 1}$ such that $X_M$ does not appear 
in the $\{X_{T_{1}}, X_{T_{2}}, \dots, X_{T_{2n}}\}$-expansion of any other basis element of $\mathcal{B}$. Thus $\lambda_M$ coincides with the coefficient of $X_M$ in the $\{X_{T_{1}}, X_{T_{2}}, \dots, X_{T_{2n}}\}$-expansion of $Y$. Note that $Y$ is assumed to be positive, we have that  $\lambda_M \in \mathbb{Z}_{\geq 0}[\displaystyle q^{\pm \frac{1}{2}}]$

By Lemma~\ref{object}(2),  by choosing sufficiently large $r$, we can find a laurent monomial $Y_{{m,E,r}}$ in the $\{X_{T^{(r)}_{1} }, X_{T^{(r)}_{2} }, \dots, X_{T^{(r)}_{2n} }\}$-expansion of  $F_{m}(X_\delta)X_E$ with coefficients  $q^{\frac{l'}{2}}$ for certain $l' \in \mathbb{Z}$, but not appear in the $\{X_{T^{(r)}_{1} }, X_{T^{(r)}_{2} }, \dots, X_{T^{(r)}_{2n} }\}$-expansion of any other basis element in these sum terms. Thus $q^{\frac{l'}{2}}\lambda_{m, E} $ coincides with the coefficient of $Y_{{m,E,r}}$ in the $\{X_{T^{(r)}_{1} }, X_{T^{(r)}_{2} }, \dots, X_{T^{(r)}_{2n} }\}$-expansion of $Y$. Note that $Y$ is assumed to be positive, we have that  $\lambda_{m,E} \in \mathbb{Z}_{\geq 0}[\displaystyle q^{\pm \frac{1}{2}}]$.
The proof is completed.
\end{proof}

\begin{remark}
When $n=1$, the quiver $\widetilde{A}_{2n-1,1}$ is the Kronecker quiver, the quantum cluster algebra of the Kronecker quiver $\widetilde{A}_{1,1}$ has been studied in details in~\cite{dx}.
\end{remark}

\subsection{Another bar-invariant $\mathbb{Z}[q^{\pm\frac{1}{2}}]$-basis} 
The $m$-th Chebyshev polynomials of the second kind $S_{m}(x)$ is related to dual semicanonical bases of cluster algebras, and defined by
  $$
  S_0(x)=1,S_1(x)=x, S_2(x)=x^2-1, S_{m+1}(x)=S_{m}(x)x-S_{m-1}(x)~\text{for}~m\geq2.
 $$
We set
$$
 \begin{aligned}
&\mathcal{S}= \{X_M | M \text{ is rigid in } \mathcal{C}_Q \} \cup \{S_{m}(X_\delta) X_E | m \geq 1, E \text{ is a regular rigid object in } \mathcal{C}_Q\}.\\
  \end{aligned}
$$
\begin{theorem}\label{mainresult-1}
The set $\mathcal{S}$
is a bar-invariant $\mathbb{Z}[q^{\pm\frac{1}{2}}]$-basis of the quantum cluster algebra $\mathcal{A}_q(Q)$. Moreover, every basis element  is positive.
\end{theorem}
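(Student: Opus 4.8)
The plan is to deduce the statement from Theorem~\ref{mainresult2} by a triangular change of basis, the only new ingredient being the classical relation between the two families of Chebyshev polynomials. First I would record that, in the normalizations used here, one has $F_k(t+t^{-1}) = t^k + t^{-k}$ for $k \geq 1$ (with $F_0 = 1$) and $S_m(t+t^{-1}) = t^m + t^{m-2} + \cdots + t^{-m}$, whence $S_m - S_{m-2} = F_m$ for $m \geq 2$, i.e.
\begin{equation*}
S_m(x) = F_m(x) + F_{m-2}(x) + F_{m-4}(x) + \cdots ,
\end{equation*}
the sum terminating at $F_1(x)$ when $m$ is odd and at $F_0(x) = 1$ when $m$ is even. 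Substituting $x = X_\delta$ and multiplying on the right by $X_E$, each generator $S_m(X_\delta)X_E$ of $\mathcal{S}$ becomes a sum, with every coefficient equal to $1$, of the elements $F_k(X_\delta)X_E$ with $0 \leq k \leq m$ and $k \equiv m \pmod 2$; each of these lies in $\mathcal{B}$, since $F_k(X_\delta)X_E \in \mathcal{B}$ for $k \geq 1$, while $F_0(X_\delta)X_E = X_E$ is of the form $X_M$ with $M = E$ rigid and so also lies in $\mathcal{B}$.

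Granting this expansion, all three assertions follow at once. For positivity: each $X_M$ in $\mathcal{S}$ already lies in $\mathcal{B}$, and each $S_m(X_\delta)X_E$ is a finite sum of elements of $\mathcal{B}$; since every element of $\mathcal{B}$ is positive by Proposition~\ref{positive} and a sum of positive elements is positive, every element of $\mathcal{S}$ is positive. For bar-invariance: every element of $\mathcal{B}$ is bar-invariant by Theorem~\ref{mainresult2}, hence so is every finite sum of them, and in particular every element of $\mathcal{S}$; alternatively one may invoke Proposition~\ref{bar-inv} (each $X_\delta^{i}X_E$ is bar-invariant) together with bar-invariance of quantum cluster monomials. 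For the basis property: index $\mathcal{B}$ and $\mathcal{S}$ by the disjoint union of $\{\,M \text{ rigid}\,\}$ and $\{\,(m,E) : m \geq 1,\ E \text{ regular rigid}\,\}$, ordered so that every $M$-index precedes every $(m,E)$-index and $(m',E) \prec (m,E)$ whenever $m' < m$. The displayed expansion says precisely that the matrix expressing $\mathcal{S}$ in terms of $\mathcal{B}$ is upper unitriangular for this order, with entries in $\{0,1\}$ and only finitely many nonzero entries in each row; its inverse has the same shape, given explicitly by $F_m(X_\delta)X_E = S_m(X_\delta)X_E - S_{m-2}(X_\delta)X_E$. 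Hence $\mathcal{S}$ is a $\mathbb{Z}[q^{\pm\frac{1}{2}}]$-basis precisely because, by Theorem~\ref{mainresult2}, $\mathcal{B}$ is.

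Almost all of the content is thus inherited from Theorem~\ref{mainresult2}, so what remains is essentially bookkeeping. The steps that genuinely need care are: verifying the Chebyshev identity in exactly this normalization, with the correct odd/even behaviour of the lowest term and the genuine constant $F_0 = 1$ (rather than a doubled term); checking that the extra term $X_E$ occurring for even $m$ really is a member of $\mathcal{B}$ --- it is, $E$ being rigid; and phrasing the unitriangularity so that it is legitimate over the infinite index set, for which it suffices that each $S_m(X_\delta)X_E$ is a finite combination of strictly $\prec$-smaller $\mathcal{B}$-elements. I do not anticipate a substantive obstacle; should the degenerate case $n = 1$ (the Kronecker quiver) call for separate treatment, it is covered by~\cite{dx}.
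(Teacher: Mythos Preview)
Your proposal is correct and follows essentially the same route as the paper: both deduce the result from Theorem~\ref{mainresult2} via the Chebyshev identity $S_m = F_m + S_{m-2}$ (equivalently, your telescoped sum $S_m = F_m + F_{m-2} + \cdots$), which yields positivity as a sum of positive elements of $\mathcal{B}$, bar-invariance from that of $\mathcal{B}$ (or Proposition~\ref{bar-inv}), and the basis property by unitriangularity. Your write-up is in fact a bit more careful about the even/odd endpoint and the unitriangular bookkeeping than the paper's own proof.
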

\begin{proof}
Since the $m$-th Chebyshev polynomials of  the first kind and  the second kind are related by $$F_m(X_\delta)=S_{m}(X_\delta)-S_{m-2}(X_\delta),$$
it follows  that $\mathcal{S}$ is also a $\mathbb{Z}[\displaystyle q^{\pm\frac{1}{2}}]$-basis of $\mathcal{A}_q(Q)$. 

By Proposition \ref{bar-inv} and the definition of the $m$-th Chebyshev polynomials of the second kind,
the elements in  $\{S_{m}(X_\delta) X_E | m \geq 1, E \text{ is a regular rigid object in } \mathcal{C}_Q \}$ are bar-invariant.  

By Proposition \ref{positive}, $F_m(X_\delta)X_E$ is positive. Then by combining with $F_m(X_\delta)=S_{m}(X_\delta)-S_{m-2}(X_\delta)$, we can obtain that the elements in  $S_m(X_\delta)X_E$ are  positive. Thus  the proof is completed. 
\end{proof}

The following result gives an representation-theoretic interpretation of the elements in  $\{S_{m}(X_\delta) X_E | m \geq 1, E \text{ is a regular rigid object in } \mathcal{C}_Q \}$.

\begin{proposition} For any $m\geq 1$, we have 
$$S_m(X_\delta)X_E=X_{R_{E(\lambda), m}\oplus E}.$$
\end{proposition}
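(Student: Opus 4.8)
The plan is to prove the identity $S_m(X_\delta)X_E = X_{R_{E(\lambda),m}\oplus E}$ by induction on $m$, using the recursion defining $S_m$ together with the multiplication formulas of Section~2. First I would reduce to the case $E=0$: by Theorem~\ref{m-3} and Lemma~\ref{lamda}(2), for any regular rigid $E$ one has $\dim\Ext^1(E(\lambda),E)=\dim\Ext^1(E,E(\lambda))=0$, and in fact $\dim\Ext^1(R_{E(\lambda),m},E)=0$ as well since $R_{E(\lambda),m}$ lies in a homogeneous tube disjoint from the tube containing $E$ (or in the same homogeneous family, where a generic $\lambda$ avoids $E$). Hence $S_m(X_\delta)X_E=S_m(X_\delta)X_{E(\lambda)}\cdots$ should factor as $X_{R_{E(\lambda),m}}X_E$ via the commuting/product relation, reducing the claim to $S_m(X_\delta)=X_{R_{E(\lambda),m}}$ (the generic element), with the $X_E$ carried along multiplicatively and the $q$-powers matching because the relevant $\Lambda$-pairings against $(\mathbf I-R^{tr})\delta$ vanish by Lemma~\ref{lamda}(2).

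Next, for the core statement $S_m(X_\delta)=X_{R_{E(\lambda),m}}$, I would run the induction. The base cases $m=0,1$ are immediate: $S_0(X_\delta)=1=X_0$ and $S_1(X_\delta)=X_\delta=X_{E(\lambda)}=X_{R_{E(\lambda),1}}$ by the generic-element definition preceding Proposition~\ref{delta-2}. For the inductive step, I would apply a multiplication formula of the type in Theorem~\ref{m-1} (or the exact sequences used in Lemma~\ref{induc1}(3)) to the pair $E(\lambda)$ and $R_{E(\lambda),m}$: the two short exact sequences
\[
0\to R_{E(\lambda),m}\to R_{E(\lambda),m+1}\to E(\lambda)\to 0,\qquad
0\to R_{E(\lambda),m-1}\to R_{E(\lambda),m}\to E(\lambda)\to 0
\]
inside the homogeneous tube give, after computing the $\Lambda$-exponent via Lemma~\ref{lamda}(2) (which makes it $0$, so the coefficients are genuine powers of $q^{1/2}$ that must combine to $1$) and the Euler-form term,
\[
X_\delta X_{R_{E(\lambda),m}} = X_{R_{E(\lambda),m+1}} + X_{R_{E(\lambda),m-1}}.
\]
Comparing with $S_{m+1}(X_\delta)=X_\delta S_m(X_\delta)-S_{m-1}(X_\delta)$ and invoking the inductive hypothesis for $m$ and $m-1$ then yields $S_{m+1}(X_\delta)=X_{R_{E(\lambda),m+1}}$. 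Independence of the choice of $\lambda$ follows as in the remark after Proposition~\ref{delta-1}, since each $X_{R_{E(\lambda),m}}$ is itself a generic element: the Grassmannian counts $|\mathrm{Gr}_{\mathbf e} R_{E(\lambda),m}|$ and the dimension vectors do not depend on $\lambda\in\mathbb F_2$.

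The main obstacle I anticipate is verifying the precise $q$-power bookkeeping in the product formula $X_\delta X_{R_{E(\lambda),m}}=X_{R_{E(\lambda),m+1}}+X_{R_{E(\lambda),m-1}}$ — one must check that the exponents $\tfrac12\Lambda((\mathbf I-R^{tr})\delta,(\mathbf I-R^{tr})\Dim R_{E(\lambda),m})$ and the correction term $\tfrac12\langle\delta,\Dim R_{E(\lambda),m}\rangle$ (and the analogous term from the injective cokernel, if any) all vanish, so that no stray power of $q^{1/2}$ survives; this is exactly the kind of computation Lemma~\ref{lamda}(2) is designed to handle, since $\Dim R_{E(\lambda),m}=m\delta$ and $\langle\delta,\delta\rangle=0$. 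A secondary point is confirming that the $\Ext^1$-vanishing needed to factor $X_{R_{E(\lambda),m}}X_E$ as $X_{R_{E(\lambda),m}\oplus E}$ holds for \emph{all} regular rigid $E$, including when $E$ lies in the non-homogeneous tube of rank $2n-1$; this follows because $R_{E(\lambda),m}$ sits in a homogeneous tube, and homogeneous and non-homogeneous tubes have no extensions between them.
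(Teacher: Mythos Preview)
Your proposal is correct and follows essentially the same route as the paper: establish $S_m(X_\delta)=X_{R_{E(\lambda),m}}$ by induction via the short exact sequences $0\to R_{E(\lambda),m}\to R_{E(\lambda),m+1}\to E(\lambda)\to 0$ and Theorem~\ref{m-1}, checking that the $\Lambda$- and Euler-form exponents vanish because $\Dim R_{E(\lambda),m}=m\delta$ and $\langle\delta,\delta\rangle=0$, and then multiply by $X_E$ using Theorem~\ref{m-3} together with Lemma~\ref{lamda}(2). The paper does the two steps in the opposite order but the content is the same; your extra remarks on independence of $\lambda$ and on the $\Ext^1$-vanishing between tubes are correct and simply make explicit what the paper leaves implicit.
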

\begin{proof}
For any $\lambda \in \mathbb{F}_2$, due to short exact sequences
\[ \xymatrix{0 \ar[r]& E(\lambda) \ar[r] & R_{E(\lambda),m+1} \ar[r]& R_{E(\lambda),m}\ar[r]&0}\] and
\[ \xymatrix{0 \ar[r]& E(\lambda) \ar[r] & R_{E(\lambda),m}\ar[r]& R_{E(\lambda),m-1} \ar[r]&0}, \] we have that
\begin{align*}
& X_{R_{E(\lambda),m}} X_{E(\lambda)} \\
= &\displaystyle q^{\frac{1}{2}  \Lambda ((\mathbf{I}-R^{tr})n\delta, (\mathbf{I}-R^{tr})\delta)  } X_{R_{E(\lambda),m+1}}        +
\displaystyle q^{\frac{1}{2}  \Lambda ((\mathbf{I}-R^{tr})n\delta, (\mathbf{I}-R^{tr})\delta) +\frac{1}{2} \langle n\delta, \delta \rangle } X_{R_{E(\lambda),m-1}} \\
=& X_{R_{E(\lambda),m+1}} +X_{R_{E(\lambda),m-1}}.
\displaystyle 
\end{align*}
By comparing with the definition of $S_m(X_\delta)$, we can deduce that $S_m(X_\delta)=X_{R_{E(\lambda),m}}$. According to  Lemma \ref{lamda}, we can then get
$$S_m(X_\delta) X_E=X_{R_{E(\lambda),m}} X_E=\displaystyle q^{\frac{1}{2}  \Lambda ((\mathbf{I}-R^{tr})m\delta, (\mathbf{I}-R^{tr})\Dim E)  } X_{R_{E(\lambda),m}\oplus E}=X_{R_{E(\lambda),m}\oplus E}.$$ The proof is completed.
\end{proof}
\begin{remark}
For Kronecker quiver $Q=\widetilde{A}_{1,1}$, the set $\mathcal{S}$ is shown to be the triangular basis of the quantum cluster algebra $\mathcal{A}_q(Q)$ \cite{BZ2014,fanqin2}. We conjecture that it holds for type $\widetilde{A}_{2n-1,1}, n\geq 2$.
\end{remark}

\appendix
%\section{Generalized upper  cluster algebras}\label{appendix}
\section{Automorphism of quantum cluster algebra induced by shift functor}
In this appendix, we consider the equal-valued acyclic quiver $(Q,\bf d)$ of full rank with $n$ vertices and $\bf d$$=(d,d,...,d)$ which is obtained from a compatible pair $(\Lambda, \tilde{B})$ with $\tilde{B}=B$. In the following, let $\mathbf{I}:=\mathbf{I}_n$ be the identity matrix of size $n$.
Let $I$ and $P$ be the injective and projective representation of $(Q,\bf d)$ such that $\operatorname{soc} I \simeq P/\rad P$. Note that $(\mathbf{I}-R^{tr})\Dim I= \Dim \soc I=\Dim P/{\operatorname{rad} P}=(\mathbf{I}-R) \Dim P$. For any representation $M$,  we have that $(\mathbf{I}-R^{tr})\Dim \tau M=-(\mathbf{I}-R) \Dim M$. All notations are same as in previous sections.

\begin{lemma}\label{fornext} Let $M$, $N$, $\tau M$ and $\tau N$ be representations of $(Q,\bf d)$, then we have that 
\begin{align*}
(1)\,\,\, &  \Lambda ( (\mathbf{I}-R^{tr})\Dim \tau M,  ( \mathbf{I}-R^{tr})\Dim \tau N)=\Lambda ( (\mathbf{I}-R^{tr})\Dim M,  ( \mathbf{I}-R^{tr})\Dim N)\\
(2)\,\,\, &  \Lambda ( (\mathbf{I}-R^{tr})\Dim I_j,  ( \mathbf{I}-R^{tr})\Dim I_i)  \\
     &   =\Lambda ( (\mathbf{I}-R^{tr})\Dim I_j,  ( \mathbf{I}-R^{tr})\Dim P_i)-d({\Dim \soc} I_j)^{tr}\Dim P_i; \\
(3) \,\,\, &  \Lambda ( (\mathbf{I}-R^{tr})\Dim I_i,  ( \mathbf{I}-R^{tr})\Dim \tau M)\\
&=\Lambda ( (\mathbf{I}-R^{tr})\Dim M,  ( \mathbf{I}-R^{tr})\Dim I_i)+d(\Dim \soc I_i)^{tr}\Dim M;    \\
(4) \,\,\, & \Lambda ( (\mathbf{I}-R^{tr})\Dim I_i,  ( \mathbf{I}-R^{tr})\Dim \tau M)=\Lambda ( (\mathbf{I}-R^{tr})\Dim M,  ( \mathbf{I}-R^{tr})\Dim P_i).
\end{align*}

%\begin{enumerate} 
%\item $\begin{array}{l} \Lambda ( (\mathbf{I}-R^{tr})\Dim \tau M,  ( \mathbf{I}-R^{tr})\Dim \tau N)=\Lambda ( (\mathbf{I}-R^{tr})\Dim M,  ( \mathbf{I}-R^{tr})\Dim N)   \end{array}$
%\item $\begin{array}{l}  \Lambda ( (\mathbf{I}-R^{tr})\Dim I_j,  ( \mathbf{I}-R^{tr})\Dim I_i)  \\
% =\Lambda ( (\mathbf{I}-R^{tr})\Dim I_j,  ( \mathbf{I}-R^{tr})\Dim P_i)-d({\Dim \soc} I_j)^{tr}\Dim P_i;  \end{array}$
%\item $\begin{array}{l} \Lambda ( (\mathbf{I}-R^{tr})\Dim I_j,  ( \mathbf{I}-R^{tr})\Dim I_i) \\ =\Lambda ( (\mathbf{I}-R^{tr})\Dim I_j,  ( \mathbf{I}-R^{tr})\Dim P_i)-d(\Dim \soc I_j)^{tr}\Dim P_i; \end{array} $ 
%\item $\begin{array}{l}   \Lambda ( (\mathbf{I}-R^{tr})\Dim I_i,  ( \mathbf{I}-R^{tr})\Dim \tau M)\\=\Lambda ( (\mathbf{I}-R^{tr})\Dim M,  ( \mathbf{I}-R^{tr})\Dim I_i)+d(\Dim \soc I_i)^{tr}\Dim M; \end{array}$
%\item $\begin{array}{l} \Lambda ( (\mathbf{I}-R^{tr})\Dim I_i,  ( \mathbf{I}-R^{tr})\Dim \tau M)=\Lambda ( (\mathbf{I}-R^{tr})\Dim M,  ( \mathbf{I}-R^{tr})\Dim P_i). \end{array}$
%\end{enumerate}

\end{lemma}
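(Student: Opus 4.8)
The plan is to prove all four identities by direct computation, reducing everything to the single structural fact that the matrix form of $\Lambda$ on the image of $(\mathbf{I}-R^{tr})$ is controlled by the compatibility condition $\tilde{B}^{tr}\Lambda = (D\mid 0) = dD_0$ (here $D = d\mathbf{I}$ since all valuations equal $d$), together with the identity $\tilde{B} = R^{tr}-R$ valid for acyclic $\tilde{B}=B$. First I would record the basic translation dictionary: for any representation $M$ one has $(\mathbf{I}-R^{tr})\Dim\tau M = -(\mathbf{I}-R)\Dim M$ (stated just above), and for the injective $I_i$ with $\soc I_i = P_i/\rad P_i$ one has $(\mathbf{I}-R^{tr})\Dim I_i = \Dim\soc I_i = (\mathbf{I}-R)\Dim P_i$. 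The strategy for each part is then to substitute these expressions into $\Lambda(-,-)$ and expand, using bilinearity and skew-symmetry of $\Lambda$, so that the difference between the two sides collapses to a term of the form $d\,(\text{something})^{tr}(\text{something})$ coming from the compatibility relation.

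For part (1), I would write $\Lambda((\mathbf{I}-R^{tr})\Dim\tau M,(\mathbf{I}-R^{tr})\Dim\tau N) = \Lambda((\mathbf{I}-R)\Dim M,(\mathbf{I}-R)\Dim N)$ after pulling out the two minus signs, and then show $\Lambda((\mathbf{I}-R^{tr})u,(\mathbf{I}-R^{tr})v) = \Lambda((\mathbf{I}-R)u,(\mathbf{I}-R)v)$ for all $u,v\in\mathbb{Z}^n$. Expanding both sides, the difference equals $\Lambda(R^{tr}u - Ru, \cdot) + \cdots$, and collecting terms one finds it is a sum of pairings of the form $u^{tr}(\text{stuff})\,\tilde{B}^{tr}\Lambda\,v$ or $u^{tr}\Lambda\tilde{B}\,v$, which by compatibility is a symmetric bilinear form in $u,v$ (a multiple of $u^{tr}D v$, symmetric), hence annihilated by the skew-symmetric part; so the difference vanishes. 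Parts (2)--(4) are the same maneuver with one of the two arguments left in $P$-form rather than $I$-form, or with one $\tau$ applied: part (4) is essentially part (1) combined with the $I_i\leftrightarrow P_i$ substitution applied to exactly one slot, part (3) follows from part (4) together with part (1) and the translation rule for $\tau M$, and part (2) is the $P_i\to I_i$ conversion in the second slot where the correction term $-d(\Dim\soc I_j)^{tr}\Dim P_i$ is precisely the compatibility term $(\mathbf{I}-R^{tr})\Dim I_j$ paired against $\tilde{B}\Dim P_i$ scaled by $d$. Throughout I would use that $\Dim\soc I_j = (\mathbf{I}-R^{tr})\Dim I_j = (\mathbf{I}-R)\Dim P_j$ to rewrite the correction terms consistently.

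The main obstacle I anticipate is purely bookkeeping: keeping track of which transpose ($R$ versus $R^{tr}$, $\tilde{B}$ versus $\tilde{B}^{tr}$) appears where, and making sure that when I invoke $\tilde{B}^{tr}\Lambda = dD_0$ to kill a term, the term really is symmetric (so skew-symmetry of $\Lambda$ eliminates it) rather than merely proportional to $D_0$ — the point being that $D_0$ is diagonal hence symmetric, so $u^{tr}(\tilde{B}^{tr}\Lambda) v = d\,u^{tr}D_0 v$ is a symmetric form in $(u,v)$ and contributes nothing to an antisymmetrized combination. A secondary subtlety is that the correction terms in (2) and (3) carry an explicit factor $d$ and a transpose on $\Dim\soc I_i$; I would double-check the sign by testing on the Kronecker case or on a small $\widetilde{A}$ example. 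Once the dictionary entries and the ``compatibility kills symmetric terms'' principle are in place, each of the four identities is a two- or three-line expansion, and no deeper input is needed.
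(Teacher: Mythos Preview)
Your plan is correct and matches the paper's proof: expand using $(\mathbf{I}-R^{tr})\Dim\tau M = -(\mathbf{I}-R)\Dim M$ and $(\mathbf{I}-R^{tr})\Dim I_i = (\mathbf{I}-R)\Dim P_i$, write $(\mathbf{I}-R) = (\mathbf{I}-R^{tr}) + B$, and simplify the cross terms via $B^{tr}\Lambda = d\mathbf{I}$ (equivalently $\Lambda B = -d\mathbf{I}$). One small correction to your heuristic for part~(1): the three cross terms, after applying compatibility, are $d\,u^{tr}(R-\mathbf{I})v$, $d\,u^{tr}(\mathbf{I}-R^{tr})v$, and $d\,u^{tr}Bv$; none of these is individually a symmetric form in $(u,v)$, so the ``symmetric hence killed by skew-symmetry'' argument does not apply as stated. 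The actual cancellation is the trivial identity $(R-\mathbf{I}) + (\mathbf{I}-R^{tr}) + (R^{tr}-R) = 0$, i.e.\ just the definition $B = R^{tr}-R$ used once more. The same one-line cancellation (or its residue, the explicit correction term) handles (2)--(4), exactly as you outline.
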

\begin{proof}

We can calculate directly as follows:
\begin{enumerate} 
\item \begin{align*}
&\Lambda ( (\mathbf{I}-R^{tr})\Dim \tau M,  ( \mathbf{I}-R^{tr})\Dim \tau N)=\Lambda ( (\mathbf{I}-R)\Dim M,  ( \mathbf{I}-R)\Dim N) \\
= &\Lambda ( (\mathbf{I}-R^{tr})\Dim M+ B \Dim M,  ( \mathbf{I}-R^{tr})\Dim N+B \Dim N) \\
=& \Lambda ( (\mathbf{I}-R^{tr})\Dim M,  ( \mathbf{I}-R^{tr})\Dim N) + \Lambda ( (\mathbf{I}-R^{tr})\Dim M,  B\Dim N)\\
& +\Lambda (B \Dim M,  ( \mathbf{I}-R^{tr})\Dim N)+   \Lambda ( B \Dim M,  B \Dim N)   \\
=& \Lambda ( (\mathbf{I}-R^{tr})\Dim M,  ( \mathbf{I}-R^{tr})\Dim N) +     d(\Dim M)^{tr}  (R-\mathbf{I})\Dim N\\
& +  d({\Dim M})^{tr} ( \mathbf{I}-R^{tr}) \Dim N+   d(\Dim M)^{tr}  B \Dim N    \\
=& \Lambda ( (\mathbf{I}-R^{tr})\Dim M,  ( \mathbf{I}-R^{tr})\Dim N);
%\displaystyle 
\end{align*}
\item \begin{align*}
&\Lambda ( (\mathbf{I}-R^{tr})\Dim I_j,  ( \mathbf{I}-R^{tr})\Dim I_i)=\Lambda ( (\mathbf{I}-R^{tr})\Dim I_j,  ( \mathbf{I}-R)\Dim P_i) \\
= &\Lambda ( (\mathbf{I}-R^{tr})\Dim I_j,  ( \mathbf{I}-R^{tr})\Dim P_i) +  \Lambda ( (\mathbf{I}-R^{tr})\Dim I_j,  B\Dim P_i)   \\
=& \Lambda ( (\mathbf{I}-R^{tr})\Dim I_j,  ( \mathbf{I}-R^{tr})\Dim P_i) + ({\Dim \soc} I_j)^{tr}\Lambda  B\Dim P_i \\
=& \Lambda ( (\mathbf{I}-R^{tr})\Dim I_j,  ( \mathbf{I}-R^{tr})\Dim P_i) -d ({\Dim \soc} I_j)^{tr}\Dim P_i;
%\displaystyle 
\end{align*} 
\item \begin{align*}
&\Lambda ( (\mathbf{I}-R^{tr})\Dim I_i,  ( \mathbf{I}-R^{tr})\Dim \tau M)=\Lambda ( (\mathbf{I}-R^{tr})\Dim I_i,  -( \mathbf{I}-R)\Dim M) \\
= &\Lambda ( (\mathbf{I}-R^{tr})\Dim I_i, - ( \mathbf{I}-R^{tr})\Dim M) +  \Lambda ( (\mathbf{I}-R^{tr})\Dim I_i,  -B\Dim M)   \\
=& \Lambda ( (\mathbf{I}-R^{tr})\Dim M,  ( \mathbf{I}-R^{tr})\Dim I_i)+d({\Dim \soc} I_i)^{tr}\Dim M;
%\displaystyle 
\end{align*} 
\item \begin{align*}
&\Lambda ( (\mathbf{I}-R^{tr})\Dim I_i,  ( \mathbf{I}-R^{tr})\Dim \tau M)=\Lambda ( (\mathbf{I}-R)\Dim P_i,  -( \mathbf{I}-R)\Dim M) \\
= &\Lambda ( (\mathbf{I}-R^{tr})\Dim P_i, - ( \mathbf{I}-R^{tr})\Dim M) +  \Lambda ( (\mathbf{I}-R^{tr})\Dim P_i,  -B\Dim M)   \\
&+ \Lambda ( B\Dim P_i, -(\mathbf{I}-R^{tr})\Dim M)+  \Lambda ( B\Dim P_i, -B\Dim M)  \\
=& \Lambda ( (\mathbf{I}-R^{tr})\Dim M,  (\mathbf{I}-R^{tr})\Dim P_i).
%\displaystyle 
\end{align*} 
\end{enumerate}\end{proof}

\begin{theorem}\label{auto} 
The mapping $\sigma: \mathcal{A}_q (Q) \longrightarrow \mathcal{A}_q (Q)$ which sends $X_M$ to $X_{M[1]}$ for any  rigid object $M$ in $\mathcal{C}_Q$ is  an automorphism.
\end{theorem}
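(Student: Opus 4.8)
The goal is to show that the map $\sigma$ sending $X_M$ to $X_{M[1]}$ for rigid objects $M$ extends to a well-defined $\mathbb{Z}[q^{\pm\frac12}]$-algebra automorphism of $\mathcal{A}_q(Q)$. The strategy is to exploit the fact that the shift functor $[1]$ on the cluster category is an autoequivalence sending cluster tilting objects to cluster tilting objects, so on the level of combinatorial data it permutes seeds. Concretely, I would first fix a reference: the initial seed here is given by the object $T_0 = P_1\oplus\cdots\oplus P_{2n}$ (with $X_{P_i[1]}=X_i$), and $T_0[1] = P_1[1]\oplus\cdots\oplus P_{2n}[1]$ corresponds under the quantum cluster character to the cluster $\{X_{P_1[1]},\dots,X_{P_{2n}[1]}\} = \{X_1,\dots,X_{2n}\}$, which is again the initial cluster. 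So $\sigma$ should be realized as the (unique) algebra endomorphism of the quantum torus $\mathcal{T}_q$ that implements the change of variables dictated by passing from $T_0$ to $T_0[1]$, and one must check this endomorphism restricts to $\mathcal{A}_q(Q)$, is bijective, and agrees with $X_M\mapsto X_{M[1]}$ on all rigid objects.

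**Key steps.** First I would make the monomial bookkeeping explicit: since $X_{P_i[1]} = X_i = X^{\mathbf e_i}$ and, by the shift relation $X_{P[1]} = X^{\Dim(P/\rad P)} = X_{\tau^{-1}I}$ together with $(\mathbf I - R^{tr})\Dim I = \Dim\soc I$, one identifies the exponent vectors attached to $P_i[1]$; this determines a $\ZZ$-linear map on $\ZZ^{2n}$ and hence a candidate $\sigma$ on $\mathcal T_q$. Second, I would verify $\sigma$ is compatible with the twisted multiplication — i.e. $\sigma(X^{\bf e})\sigma(X^{\bf f}) = q^{\Lambda(\bf e,\bf f)/2}\sigma(X^{\bf e+f})$ up to the correct power of $q$ — which is exactly where Lemma~\ref{fornext}(1) is used: it says the form $\Lambda((\mathbf I-R^{tr})\Dim-, (\mathbf I-R^{tr})\Dim-)$ is invariant under $\tau$, hence under $[1]$ on the relevant exponent lattice. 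Third, since $[1]$ sends rigid objects to rigid objects and the quantum cluster characters satisfy the mutation/exchange relations functorially (Theorems~\ref{m-1}--\ref{th:init-comm}), one checks $\sigma(X_M) = X_{M[1]}$ propagates from the initial cluster to all clusters by induction on mutation distance; at each mutation step the exchange relation is preserved because $\sigma$ respects both the exponents (Lemma~\ref{fornext}(2)--(4) handle the injective/projective bookkeeping and the correction terms $d(\Dim\soc I)^{tr}\Dim M$) and the $q$-powers. Finally, bijectivity follows because $[-1]$ gives an inverse functor, so the same construction produces $\sigma^{-1}$, and $\sigma\sigma^{-1} = \id$ on generators.

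**Main obstacle.** The routine part is defining $\sigma$ on the torus; the genuinely delicate point is showing that the exchange relations are preserved, i.e. that applying $[1]$ to the short exact sequences underlying Theorems~\ref{m-1} and~\ref{m-2} produces exchange relations with \emph{exactly} the right coefficient $q$-powers, not merely up to an ambiguous scalar. This is where the four identities of Lemma~\ref{fornext} do the real work: they precisely match the $\Lambda$-exponents before and after the shift, absorbing the Euler-form and socle-dimension corrections that appear when one trades an injective $I_i$ for its shift $\tau^{-1}I_i$ or a projective for $P[1]$. I would organize the induction so that the base case is the initial seed (trivial, since $\sigma$ fixes $\{X_1,\dots,X_{2n}\}$ as a set up to the permutation induced by $[1]$), and the inductive step rewrites a mutation of $T$ as the $[1]$-image of the corresponding mutation of $T[-1]$, invoking Lemma~\ref{fornext} to reconcile the normalizations. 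Once the relations match on the nose, well-definedness, the algebra-homomorphism property, and invertibility are all formal, and the fact that $\sigma(q^{m/2}) = q^{m/2}$ is immediate from the construction.
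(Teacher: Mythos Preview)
Your overall instinct---that Lemma~\ref{fornext} carries the burden of matching $q$-powers before and after the shift, and that preservation of exchange relations is the crux---is correct and matches the paper. However, steps 1--2 of your plan rest on a misconception that creates a genuine gap.

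The map $\sigma$ is \emph{not} induced by a $\ZZ$-linear map on the exponent lattice, and it is \emph{not} a quantum torus automorphism. Concretely, $X_i = X_{P_i[1]}$, and in $\mathcal{C}_Q$ one has $[1]\cong\tau$, so $P_i[1][1]\cong I_i$; hence $\sigma(X_i)=X_{I_i}$, which is a genuine Laurent \emph{polynomial} in $X_1,\dots,X_{2n}$, not a monomial $X^{\mathbf e}$. In particular your base case ``$\sigma$ fixes $\{X_1,\dots,X_{2n}\}$ as a set up to the permutation induced by $[1]$'' is false: $\sigma$ sends the initial cluster to the different cluster $\{X_{I_1},\dots,X_{I_{2n}}\}$. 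So the proposed check $\sigma(X^{\mathbf e})\sigma(X^{\mathbf f})=q^{\Lambda(\mathbf e,\mathbf f)/2}\sigma(X^{\mathbf e+\mathbf f})$ does not even parse, and the induction has no valid starting point as written.

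What the paper does instead is bypass the torus entirely: it regards $\mathcal{A}_q(Q)$ as generated by the $X_M$ for rigid $M$, subject to (i) the quasi-commuting relations of Theorem~\ref{m-3} and (ii) the exchange relations of Theorems~\ref{m-1} and~\ref{m-2}, and then checks case-by-case that applying $[1]$ to such a relation yields another relation of the same shape with the \emph{same} $q$-exponent. For quasi-commuting relations with $M,N,\tau M,\tau N$ all modules this is exactly Lemma~\ref{fornext}(1). For the exchange relation of Theorem~\ref{m-2} (the case $X_M X_{I[-1]}$), the shifted relation becomes $X_{M[1]}X_I$, which after bar-involution and Lemma~\ref{fornext}(3) matches on the nose; the remaining cases use Lemma~\ref{fornext}(2),(4) similarly. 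There is no induction on mutation distance---each relation is handled directly. Your step~3 is close to this, but you should drop the torus framework and recast it as a direct verification that $\sigma$ respects the presentation of $\mathcal{A}_q(Q)$ by generators $X_M$ and the two families of relations.
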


\begin{proof}
We  need to prove that $\sigma$ preserves the following  two classes of relations of $\mathcal{A}_q (Q)$. 
\begin{enumerate} 
\item  Preserving the quasi-commuting relations. 

Let $M\oplus N$ be a rigid object in $\mathcal{C}_Q$. 
We only consider the case: $M$, $N$, $M[1]$  and $N[1]$ are all representations of $(Q, \bf d)$. The proof are similar for other cases.  Then the quasi-commuting relations are
$$
X_MX_N =\displaystyle q^{\frac{1}{2}  \Lambda ( (\mathbf{I}-R^{tr})\Dim  M,  ( \mathbf{I}-R^{tr})\Dim N)}X_{M\oplus N};$$
$$X_M X_N =\displaystyle q^{\Lambda ( (\mathbf{I}-R^{tr})\Dim  M,  ( \mathbf{I}-R^{tr})\Dim N)}X_NX_M.$$
By using Lemma~\ref{fornext} (1), we obtain that
\begin{align*}
& X_{M[1]} X_{N[1]} =\displaystyle q^{\frac{1}{2}  \Lambda ( (\mathbf{I}-R^{tr})\Dim  M[1],  ( \mathbf{I}-R^{tr})\Dim N[1])}X_{M[1]\oplus N[1]}\\
= & \displaystyle q^{\frac{1}{2}  \Lambda ( (\mathbf{I}-R^{tr})\Dim  M,  ( \mathbf{I}-R^{tr})\Dim N)}X_{M[1]\oplus N[1]};
\end{align*} 
\begin{align*}
& X_{M[1]} X_{N[1]} =\displaystyle q^{\Lambda ( (\mathbf{I}-R^{tr})\Dim  M[1],  ( \mathbf{I}-R^{tr})\Dim N[1])}X_{M[1]}X_{N[1]}\\
= & \displaystyle q^{\frac{1}{2}  \Lambda ( (\mathbf{I}-R^{tr})\Dim  M,  ( \mathbf{I}-R^{tr})\Dim N)}X_{M[1]}X_{N[1]},
\end{align*} 
which imply that   $\sigma$ preserves the  relations in this case.

\item Preserving the exchange relations.  

Firstly let $M$ and $N$ be indecomposable rigid representation of $(Q, \bf d)$ with \\ $\dim_{\End(M)} \Ext^1 (M,N)=1.$  
Then the exchange relation is 
\begin{align*}
&X_M X_N  \\
= & \displaystyle q^{\frac{1}{2}  \Lambda ( (\mathbf{I}-R^{tr})\Dim  M,  ( \mathbf{I}-R^{tr})\Dim N)}X_E \\
& +q^{\frac{1}{2}  \Lambda ( (\mathbf{I}-R^{tr})\Dim  M,  ( \mathbf{I}-R^{tr})\Dim N)-\frac{1}{2} d}X_{D\oplus A\oplus I[-1] },
\end{align*} 
 where the corresponding exact sequences are
\[ \xymatrix{0 \ar[r]& N  \ar[r] & E \ar[r]& M \ar[r]&0}\] and
\[ \xymatrix{0 \ar[r]& D \ar[r] &  N \ar[r]& \tau M \ar[r]&   \tau A\oplus I\ar[r]   & 0}. \]
If  $M[1]$  and $N[1]$ are also indecomposable rigid representation of $(Q,  \bf d)$,  then by using Lemma~\ref{fornext} (1), we obtain that
\begin{align*}
&X_{M[1]} X_{N[1]}  \\
= & \displaystyle q^{\frac{1}{2}  \Lambda ( (\mathbf{I}-R^{tr})\Dim  M[1],  ( \mathbf{I}-R^{tr})\Dim N[1])}X_{E[1]} \\
&+q^{\frac{1}{2}  \Lambda ( (\mathbf{I}-R^{tr})\Dim  M[1],  ( \mathbf{I}-R^{tr})\Dim N[1])-\frac{1}{2} d }X_{D[1]\oplus A[1]\oplus I }\\
= & \displaystyle q^{\frac{1}{2}  \Lambda ( (\mathbf{I}-R^{tr})\Dim  M,  ( \mathbf{I}-R^{tr})\Dim N)}X_{E[1]} \\
& +q^{\frac{1}{2}  \Lambda ( (\mathbf{I}-R^{tr})\Dim  M,  ( \mathbf{I}-R^{tr})\Dim N)-\frac{1}{2} d }X_{D[1]\oplus A[1]\oplus I },
\end{align*} 
which implies that   $\sigma$ preserves the  relations in this case. The proof of other cases is similar.

Secondly let $M$ be an indecomposable rigid representation of  $(Q, \bf d)$ and $I$ be an indecomposable injective representation of $(Q, \bf d)$ with 
$\dim_{\End(I)} \Hom (M,I)=\dim_{\End(P)} \Hom (P,W)=1$ where
$P=\nu^{-1} (I)$ with $\nu^{-1}$ being the inverse Nakayama functor. Then the exchange relation is  
\begin{align*}
&X_M X_{I[-1]} \\
= & \displaystyle q^{-\frac{1}{2}  \Lambda ( (\mathbf{I}-R^{tr})\Dim  M,  ( \mathbf{I}-R^{tr})\Dim I)}X_{G\oplus I'[-1]} \\ &+
q^{-\frac{1}{2}  \Lambda ( (\mathbf{I}-R^{tr})\Dim  M,  ( \mathbf{I}-R^{tr})\Dim I)-\frac{1}{2} d}X_{F\oplus P''[1] },
\end{align*} 
where the corresponding exact sequences are
\[ \xymatrix{0 \ar[r]& G \ar[r] &  M \ar[r]& I \ar[r]&  I'\ar[r]   & 0}\] and
\[ \xymatrix{0 \ar[r]& P'' \ar[r] &  P \ar[r]& M \ar[r]&   F\ar[r]   & 0}. \]
In the following, we only consider the case $M$ is not projective, and the proof  of the other case is similar.  Thus we obtain that
\begin{align*}
&X_{I} X_{M[1]} \\
= & \displaystyle q^{\frac{1}{2}  \Lambda ( (\mathbf{I}-R^{tr})\Dim  I,  ( \mathbf{I}-R^{tr})\Dim \tau M)}X_{F[1]\oplus I''} \\ &+
q^{\frac{1}{2}  \Lambda ( (\mathbf{I}-R^{tr})\Dim  I,  ( \mathbf{I}-R^{tr})\Dim \tau M)-\frac{1}{2} d}X_{G[1]\oplus I' },
\end{align*} 
where $P''=\nu^{-1} (I'')$.
Then,  by using bar-involution and Lemma~\ref{fornext} (3),  we have
\begin{align*}
&X_{M[1]} X_{I} \\
= & \displaystyle q^{-\frac{1}{2}  \Lambda ( (\mathbf{I}-R^{tr})\Dim  I,  ( \mathbf{I}-R^{tr})\Dim \tau M)}X_{F[1]\oplus I''} \\ &+
q^{-\frac{1}{2}  \Lambda ( (\mathbf{I}-R^{tr})\Dim  I,  ( \mathbf{I}-R^{tr})\Dim \tau M)+\frac{1}{2} d}X_{G[1]\oplus I' }\\
= & \displaystyle q^{-\frac{1}{2}  \Lambda ( (\mathbf{I}-R^{tr})\Dim  M,  ( \mathbf{I}-R^{tr})\Dim I)}X_{G[1]\oplus I'} \\ &+
q^{-\frac{1}{2}  \Lambda ( (\mathbf{I}-R^{tr})\Dim  M,  ( \mathbf{I}-R^{tr})\Dim I)-\frac{1}{2} d}X_{F[1]\oplus I''},\end{align*} 
which imply that   $\sigma$ preserves the  relations in this case. 

\end{enumerate}

Hence we can verify $\sigma$ is an automorphism.
\end{proof}

%%%%%%%%%%%%%%%%%%%%%%%%%%%%%%%%%%%%%%%%%%%%%%%%%%%%%%%%%%%%%%%%%%%

\end{document}